      \newcommand {\al}   {\alpha}          \newcommand {\bt}  {\beta}
      \newcommand {\gam } {\gamma}          \newcommand {\Gam}  {\Gamma}
      \newcommand {\del}  {\delta}          \newcommand {\Del} {\Delta}
              \newcommand {\ve}   {\varepsilon}
                 \newcommand {\vphi} {\varphi}
      \newcommand {\lam}  {\lambda}         
               \newcommand {\rrr}   {r}     \newcommand {\vvv}   {v}
      \newcommand {\eee}   {e}       \newcommand {\nnn}   {n}
                \newcommand {\Om}  {\Omega}
      \newcommand {\pl}   {\partial}        
           \newcommand {\UUU}  {{\cal U}}
      \newcommand {\HHH}    {\mathcal{H}}       
      \newcommand {\RRR}  {{\mathbb R}}     
      \newcommand {\OOO}  {{\cal O}}
              \newcommand {\BBB}  {{\cal B}}
           \newcommand {\NNN}  {\mathcal{N}}
     \newcommand {\beq}  {\begin{equation}}
      \newcommand {\eeq}  {\end{equation}}
      \newtheorem{theorem}{Theorem}
      \newtheorem{lemma}{Lemma}
      \newtheorem{utv}{Proposition}
      \newtheorem{zam}{Remark}
      \newtheorem{primer}{Example}
                \newtheorem{corollary}{Corollary}
\title{Behavior of convex surfaces near ridge points}
\author{Alexander Plakhov\thanks{Center for R{\&}D in Mathematics and Applications, Department of Mathematics, University of Aveiro, Portugal and Institute for Information Transmission Problems, Moscow, Russia, plakhov@ua.pt}}
\date{}
\begin{document}
\maketitle

\begin{abstract}
The aim of this paper is twofold. First, we cut off a part of a convex surface by a plane near a ridge point and characterize the limiting behavior of the surface measure in $S^2$ induced by this part of surface when the plane approaches the point.
Second, this characterization is applied to Newton's least resistance problem for convex bodies: minimize the functional $\int\!\!\int_\Om (1 + |\nabla u(x,y)|^2)^{-1} dx\, dy$ in the class of concave functions $u: \Om \to [0,M]$, where $\Om \subset \RRR^2$ is a convex body and $M > 0$. It has been known \cite{BFK} that if $u_*$ solves the problem then $|\nabla u_*(x,y)| \ge 1$ at all regular points $(x,y)$ such that $u_*(x,y) < M$. We prove that if the upper level set $L_M = \{ (x,y): u_*(x,y) = M \}$ has nonempty interior, then for almost all points of its boundary $(\bar x, \bar y) \in \pl L_M$ one has $\lim_{\stackrel{(x,y)\to(\bar x,\bar y)}{u_*(x,y)<M}}|\nabla u_*(x,y)| = 1$.
\end{abstract}

\begin{quote}
{\small {\bf Mathematics subject classifications:} 52A15, 26B25, 49Q10}
\end{quote}

\begin{quote}
{\small {\bf Key words and phrases:} Convex body, surface measure of a convex surface, Newton's problem of minimal resistance}
\end{quote}

\section{Introduction}

\subsection{Local behavior of convex surfaces}

Consider a convex body\footnote{A convex body is a convex compact set with nonempty interior.} $C$ in the 3-dimensional space $\RRR^3$  with the coordinate $\rrr = (x,y,z)$, and a singular point $\rrr_0$ on its boundary, $\rrr_0 \in \pl C$. Let $\Pi$ be a plane of support to $C$ at $\rrr_0$. Consider the part of the surface $\pl C$ containing $\rrr_0$ cut off by a plane parallel to $\Pi$. We are interested in studying the limiting properties of this part of surface when the cutting plane approaches $\Pi.$

A singular point of the boundary, $\rrr_0$, is called a {\it conical point} if the tangent cone to $C$ at $\rrr_0$ is not degenerate, and a {\it ridge point} if the tangent cone degenerates into a dihedral angle (see, e.g., \cite{Pogorelov}). In this paper we consider ridge points, postponing the study of conical points to the future.

More precisely, let the tangent cone at $\rrr_0$ be given by
\beq\label{dih angle}
(\rrr - \rrr_0,\, \eee_1) \le 0, \qquad (\rrr - \rrr_0,\, \eee_2) \le 0,
\eeq
where $\eee_1$ and $\eee_2$ are non-collinear unit vectors, $\eee_1 \ne \pm\eee_2$. (Here and in what follows, $(\cdot\,, \cdot)$ means the scalar product.) Notice that the outward normals of all planes of support at $\rrr_0$ form the curve
$$
\Gam = \Gam_{\eee_1,\eee_2} = \{ v = \mu_1 \eee_1 + \mu_2 \eee_2 : |v| = 1,\, \mu_1 \ge 0,\, \mu_2 \ge 0 \} \subset S^2;
$$
it is the smaller arc of the great circle on $S^2$ through $\eee_1$ and $\eee_2$.

Let $\eee$ be a positive linear combination of $\eee_1$ and $\eee_2$, $\eee = \lam_1 \eee_1 + \lam_2 \eee_2$, $\lam_1 > 0$, $\lam_2 > 0$. For $t > 0$ consider the convex body
$$
C_t = C \cap \{ \rrr : (\rrr - \rrr_0,\, \eee) \ge -t \};
$$
it is the piece of $C$ cut off by the plane with the normal vector $e$ at the distance $t$ from $\rrr_0$. The body $C_t$ is bounded by the planar domain
$$
B_t = C \cap \{ \rrr : (\rrr - \rrr_0,\, \eee) = -t \}
$$
and the convex surface
\beq\label{St}
S_t = \pl C \cap \{ \rrr : (\rrr - \rrr_0,\, \eee) \ge -t \};
\eeq
that is, $\pl C_t = B_t \cup S_t$.

In what follows we denote by $|A|$ the standard Lebesgue measure (area) of the Borel set $A$ on the plane or on the convex surface $\pl C$. In particular, $|\square ABCD|$ means the area of the quadrangle $ABCD$. The same notation will be used for the length of a line segment or a curve; for instance, $|MN|$ means the length of the segment $MN$.

We denote by $\Pi^t$ the plane of equation $(\rrr - \rrr_0,\, \eee) = -t$ and by $\Pi_i$ $(i=1,\,2)$ the plane of equation $(\rrr - \rrr_0,\, \eee_i) = 0$.

Let $\pl' C$ be the set of regular points of $\pl C$; it is a full-measure subset of $\pl C$. Denote by $\nnn_\rrr$ the outward normal to $C$ at the point $\rrr \in \pl' C$. The surface measure of the convex body $C$ is the Borel measure $\nu_C$ in $S^2$ defined by
$$
\nu_C(A) := |\{ \rrr \in \pl' C : \nnn_\rrr \in A \}|
$$
for any Borel set $A \subset S^2$. It is well known that the surface measure satisfies the equation
\beq\label{center}
\int_{S^2} n\, d\nu_C(n) = \vec 0.
\eeq

Next we introduce the normalized measure $\nu_t = \nu_{t,\eee,\rrr_0,C}$ induced by the surface $S_t$. Namely, for any Borel set $A \subset S^2$ by definition we have
$$
\nu_t(A) := \frac{1}{|B_t|} |\{ \rrr \in S_t \cap \pl' C : \nnn_\rrr \in A \}|.
$$

The surface measure of the convex body $C_t$ equals $\nu_{C_t} = |B_t| \del_{-\eee} + |B_t| \nu_t$, hence $\int_{S^2} n\, d\nu_{C_t}(n) = |B_t| (-\eee + \int_{S^2} n\, d\nu_t(n)).$ Formula \eqref{center} applied to $C_t$ results in
\beq\label{centt}
\int_{S^2} n\, d\nu_t(n) = \eee.
\eeq

We say that $\nu_t$ weakly converges to $\nu_*$ as $t \to 0$, and use the notation $\nu_t \xrightarrow[t\to0]{} \nu_*$, if for any continuous function $f$ on $S^2$, $\lim_{t\to0} \int_{S^2} f(n)\, d\nu_t(n) = \int_{S^2} f(n)\, d\nu_*(n)$. Similarly, $\nu_*$ is called a partial weak limit of $\nu_t$, if there exists a sequence $t_i,\, i \in \mathbb{N}$ converging to zero such that for any continuous function $f$ on $S^2$, $\lim_{i\to\infty} \int_{S^2} f(n)\, d\nu_{t_i}(n) = \int_{S^2} f(n)\, d\nu_*(n)$.

We are interested in studying the properties of the weak limit (or a partial weak limit) $\nu_*$.

One of the properties is immediate: going to the limit $t \to 0$ or $t_i \to 0$ in formula \eqref{centt}, one obtains
\beq\label{cent}
\int_{S^2} n\, d\nu_*(n) = \eee.
\eeq

\begin{zam}\label{rem0} 
In the 2D case the following asymptotic property of a convex curve near a singular point is almost obvious.

Let $C \subset \RRR^2$ be a planar convex body and $\rrr_0$ be a singular point on its boundary. Let the tangent angle at $\rrr_0$ be given by \eqref{dih angle}, where $\eee_1$ and $\eee_2$ are non-collinear unit vectors, and let $\eee = \lam_1 \eee_1 + \lam_2 \eee_2$, $\lam_1 > 0$, $\lam_2 > 0$, $|\eee| = 1.$ Then the curve $S_t$ defined by \eqref{St} is the disjoint union of two curves, $S_t = S_t^1 \cup S_t^2$, where for $i = 1,\, 2$ the maximal deviation between normal vectors at $S_t^i$ and $\eee_i$ tends to zero, $\sup_{\xi\in S_t^i} (\nnn_\xi, \eee_i) \to 1$ as $t \to 0.$ Additionally, the lengths of $S_t^1$, $S_t^2$, and $B_t$ obey the following asymptotic relation (the sine law): there exist and coincide the following limits
$$
0 < \lim_{t\to0} \frac{|S_t^1|}{t\sin\bt} = \lim_{t\to0} \frac{|S_t^2|}{t\sin\al} = \lim_{t\to0} \frac{|B_t|}{t\sin(\pi-\al-\bt)} < +\infty,
$$
where $\al = \measuredangle \langle \eee_1, \eee \rangle \in (0,\, \pi)$, $\bt = \measuredangle \langle \eee_2, \eee \rangle \in (0,\, \pi)$, and therefore, $\sin\al = \sqrt{1 - (\eee_1, \eee)^2}$, $\sin\bt = \sqrt{1 - (\eee_2, \eee)^2}$, $\sin(\pi-\al-\bt) = (\eee_1, \eee)\sqrt{1 - (\eee_2, \eee)^2} + (\eee_2, \eee)\sqrt{1 - (\eee_1, \eee)^2}$
(see Fig.~\ref{fig0}). The proof of this formula is left to the reader.
     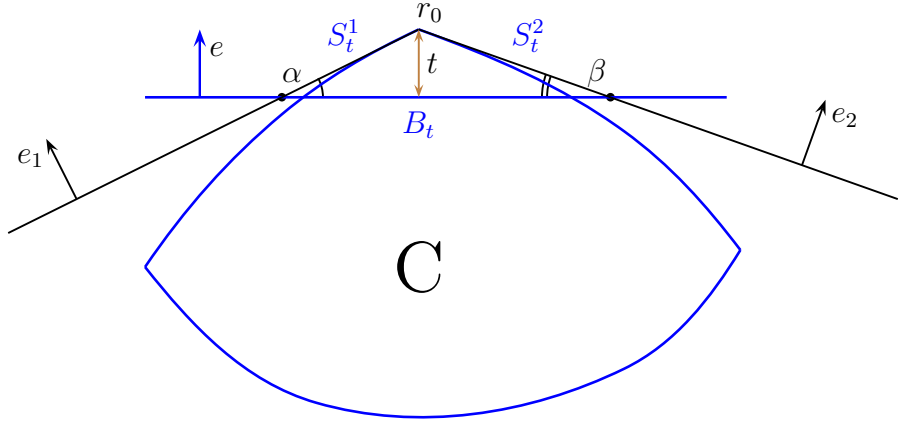
\begin{figure}[h]
\begin{picture}(0,160)
\scalebox{0.9}{
\rput(4,2.3){
\psdots(2,2.5)(6.8,2.5)
\psarc(2,2.5){0.6}{0}{28}
\rput(2.15,2.83){\scalebox{1.1}{$\alpha$}}
\psarc(6.8,2.5){1}{160}{180}
\psarc(6.8,2.5){0.925}{160}{180}
\rput(6.6,2.86){\scalebox{1.1}{$\bt$}}
\psecurve[linecolor=blue,linewidth=1.1pt](-1,-2)(0,0)(2,2.25)(4,3.5)(7,4.5)
\psecurve[linecolor=blue,linewidth=1.1pt](2,4)(4,3.5)(7,2)(8.7,0.25)(10,-2)
\psecurve[linecolor=blue,linewidth=1.1pt](-0.7,1.5)(0,0)(2.5,-2)(7,-1.5)(8.7,0.25)(9.3,2)
\psline[linecolor=blue,linewidth=1.1pt](0,2.5)(8.5,2.5)
\psline(-2,0.5)(4,3.5)
\psline(4,3.5)(11,1)
\psline[arrows=->,arrowscale=1.5](-1,1)(-1.444,1.888)
\rput(4.16,3.75){\scalebox{1.1}{$\rrr_0$}}
\rput(10.23,2.2){\scalebox{1.1}{$\eee_2$}}
\psline[arrows=->,arrowscale=1.5](9.6,1.5)(9.944,2.464)
\rput(-1.67,1.6){\scalebox{1.1}{$\eee_1$}}
\psline[arrows=->,arrowscale=1.5,linecolor=blue,linewidth=1pt](0.8,2.5)(0.8,3.5)
\rput(1.05,3.2){\scalebox{1.1}{$\eee$}}
 \rput(4,0){$\scalebox{2.5}{C}$}
\psline[linecolor=brown,arrows=<->,arrowscale=1.25](4,2.5)(4,3.5)
\rput(4.2,3){\scalebox{1.1}{$t$}}
\rput(2.9,3.4){\scalebox{1.1}{$\blue S_t^1$}}
\rput(5.6,3.4){\scalebox{1.1}{$\blue S_t^2$}}
\rput(4,2.1){\scalebox{1.1}{$\blue B_t$}}

}}
\end{picture}
\caption{The behavior of a planar convex curve near a singular point.}
\label{fig0}
\end{figure} 

The measure $\nu_t$ is defined in the exactly the same way as in the 3D case: for any Borel set $A \subset S^1$, $\nu_t(A) := \frac{1}{|B_t|} |\{ \rrr \in S_t \cap \pl' C : \nnn_\rrr \in A \}|.$ We see that in the 2D case $\nu_t$ weakly converges to the measure $\nu_*$ which is the sum of two atoms,
$$
\nu_* = \frac{\sin\bt}{\sin(\al+\bt)} \del_{\eee_1} + \frac{\sin\al}{\sin(\al+\bt)} \del_{\eee_2}.
$$
Using formula \eqref{cent}, one finds that $\frac{\sin\bt}{\sin(\al+\bt)} = \lam_1$ and $\frac{\sin\al}{\sin(\al+\bt)} = \lam_2$.
  \end{zam}

Consider two simple examples.

\begin{primer}\label{pr1}
Let $C$ be a convex polyhedron and $\rrr_0$ be an interior point of an edge of $C$. That is, $\rrr_0$ is a ridge point (see Fig.~\ref{fig00}).

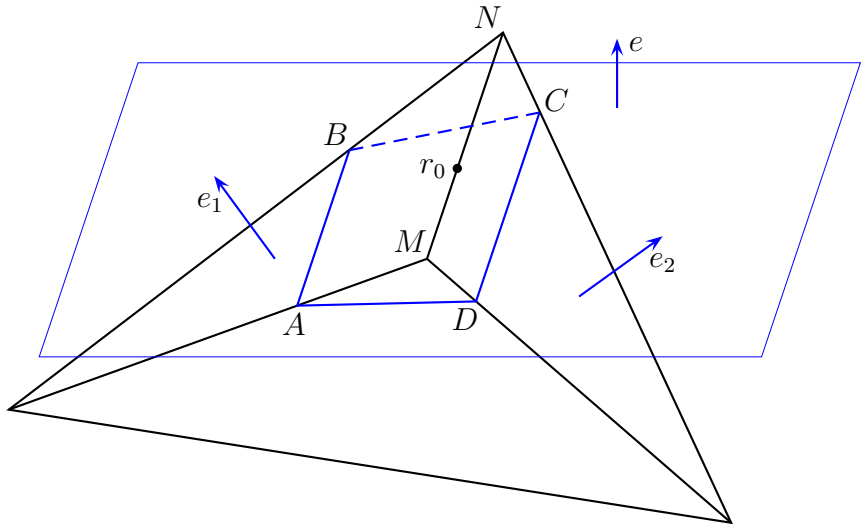
\begin{figure}[h]
\begin{picture}(0,200)
\scalebox{1}{
\rput(2.8,1.5){

\pspolygon(0,0)(6.5,5)(9.5,-1.5)
\psline(0,0)(5.5,2)(6.5,5)
\psline(5.5,2)(9.5,-1.5)
\psline[linecolor=blue](6.98,3.94)(6.145,1.435)(3.793,1.379)(4.48,3.44)
\psline[linecolor=blue,linestyle=dashed](4.48,3.44)(6.98,3.94)
\pspolygon[linecolor=blue,linewidth=0.3pt](0.4,0.7)(9.9,0.7)(11.2,4.6)(1.7,4.6)
\psdots[dotsize=3.5pt](5.9,3.2)
\rput(5.58,3.2){$\rrr_0$}
\psline[linecolor=blue,arrows=->,arrowscale=1.5,linewidth=0.8pt](7.5,1.5)(8.6,2.3)
\psline[linecolor=blue,arrows=->,arrowscale=1.5,linewidth=0.8pt](3.5,2)(2.7,3.1)
\psline[linecolor=blue,arrows=->,arrowscale=1.5,linewidth=0.8pt](8,4)(8,4.92)
\rput(6.3,5.2){$N$}
\rput(5.28,2.23){$M$}
\rput(3.75,1.13){$A$}
\rput(4.3,3.65){$B$}
\rput(7.2,4.1){$C$}
\rput(6,1.2){$D$}
\rput(2.65,2.75){$\eee_1$}
\rput(8.6,1.95){$\eee_2$}
\rput(8.25,4.85){$\eee$}
}}
\end{picture}
\caption{$C$ is the tetrahedron, $\rrr_0$ is situated on its edge $MN$, and the section of $C$ by the plane $(\rrr - \rrr_0,\, \eee) = -t$ is the quadrilateral $ABCD$.}
\label{fig00}
\end{figure} 

The surface $S_t$ is composed of two quadrilaterals $MNBA$ and $MNCD$ and two triangles $BCN$ and $ADM$. The outward normals to the quadrilaterals $MNBA$ and $MNCD$ are $\eee_1$ and $\eee_2$, respectively, and their areas are of the order of $t$, $|\square MNBA| = c_1 t + O(t^2)$, $|\Box MNCD| = c_2 t + O(t^2)$, $c_1 > 0$, $c_2 > 0$. The areas of the triangles $BCN$ and $ADM$ are $O(t^2)$.

The planar surface $B_t$ is the quadrilateral $ABCD$, the normal to this surface is $\eee = \lam_1 \eee_1 + \lam_2 \eee_2$, and its area is of the order of $t$, $|\square ABCD| = c_0 t + O(t^2)$. It follows that the corresponding measure $\nu_t$ converges to the measure $\nu_*$ supported on the two-point set $\{ e_1,\, e_2 \}$, $\nu_* = \frac{c_1}{c_0} \del_{e_1} + \frac{c_2}{c_0} \del_{e_2}$. Using formula \eqref{cent}, one obtains that $\frac{c_1}{c_0} = \lam_1$, $\frac{c_2}{c_0} = \lam_2$,
and therefore,
\beq\label{limit12}
\nu_* = \lam_1 \del_{\eee_1} + \lam_2 \del_{\eee_2}.
\eeq
\end{primer}

\begin{primer}\label{pr2}
Let $C = \{ \rrr = (x,y,z) : x^2 + y^2 \le 1,\, 0 \le z \le 1 \}$ be a cylinder, and take the ridge point $\rrr_0 = (1,0,0) \in \pl C$. The tangent cone at $\rrr_0$ is a dihedral angle with the outward normals $\eee_1 = (1,0,0)$ and $\eee_2 = (0,0,-1)$. Take a unit vector $\eee = \lam_1 \eee_1 + \lam_2 \eee_2$ with $\lam_1 > 0$, $\lam_2 > 0$, $\lam_1^2 +  \lam_2^2 = 1$. See Fig.~\ref{fig01}.

     \begin{figure}[h]
\begin{picture}(0,200)
\scalebox{1}{
\rput(2.8,1.8){

\psline(0,0)(0,4)
\psellipse(3,0)(3,0.75)
\pspolygon[fillstyle=solid,fillcolor=white,linewidth=0pt,linecolor=white](0.05,0)(5.96,0)(5.96,1)(0.05,1)
\psellipse[linewidth=0.5pt,linestyle=dashed](3,0)(3,0.75)
\psellipse(3,4)(3,0.75)
\psdots[dotsize=3.5pt](6,0)
\rput(6.3,0){$\rrr_0$}
\psline[linecolor=blue,linestyle=dashed,linewidth=0.5pt](4.62,-0.64)(4.96,0.53)
\psecurve[linewidth=0.8pt,linecolor=blue]
(4.3,-0.96)(4.62,-0.64)(4.9,-0.36)(5.2,-0.03)(5.5,0.31)(5.8,0.68)(5.91,0.84)(6.01,1.12)(5.9,1.13)
\psecurve[linewidth=0.5pt,linestyle=dashed,linecolor=blue]
(5.91,0.84)(6.01,1.12)(5.9,1.13)(5.6,0.98)(5.3,0.785)(4.96,0.53)(4.7,0.335)
\psline(6,0)(6,4)
\psline[linewidth=0.3pt,linecolor=blue](4.45,-0.67)(4.12,-1.8)(7.12,0.7)(8,3.7)(6,2.03)
\psline[linestyle=dashed,linewidth=0.3pt,linecolor=blue](6,2.03)(5,1.2)(4.45,-0.67)
\psline[arrows=->,arrowscale=1.5,linewidth=0.8pt](6,3.1)(7.1,3.1)
\rput(6.9,3.4){$\eee_1$}
\psline[arrows=->,arrowscale=1.5,linewidth=0.8pt](3.4,-0.7)(3.4,-1.8)
\rput(3.1,-1.6){$\eee_2$}
\psline[arrows=->,arrowscale=1.5,linewidth=0.8pt](7.2,1)(7.85,0.5)
\rput(7.9,0.8){$\eee$}
}}
\end{picture}
\caption{$C$ is the cylinder and $\rrr_0$ is located on the boundary of its rear disc.}
\label{fig01}
\end{figure}
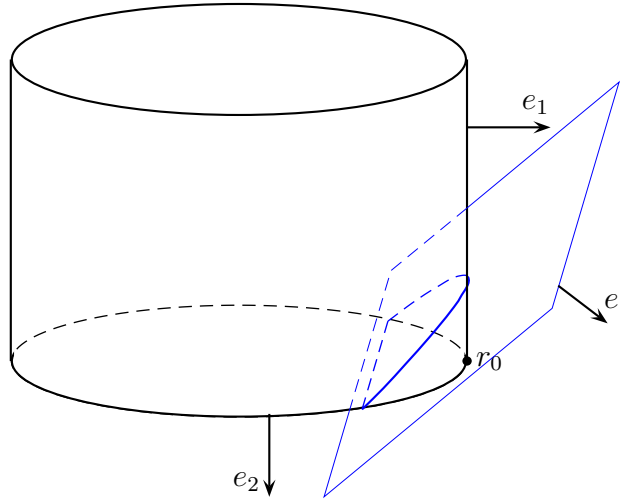 

The surface $S_t$ is the union of the piece of the cylindrical surface, $S_t^1$, and the piece of the rear disc, $S_t^2$, cut off by the plane $(\rrr - \rrr_0,\, \eee) = -t$. The outward normals at points of $S_t^2$ coincide with $\eee_2$. The outward normals at points of $S_t^1$ are contained in a neighborhood of $\eee_1$ that shrinks to $\eee_1$ when $t \to 0$. Hence $\nu_t$ is the sum of two terms, where the former one is proportional to $\del_{\eee_2}$ and the latter one is proportional to a measure that weakly converges to  $\del_{\eee_2}$. Using formula \eqref{cent}, one concludes that $\nu_t$ converges to the measure $\nu_*$ given by \eqref{limit12}.
\end{primer}

It may seem that the limiting measure is always given by \eqref{limit12}, as in the 2D case and in examples \ref{pr1} and \ref{pr2}, but this is not the case. Consider two more examples.

\begin{primer}\label{pr3}
Let $C$ be the part of a cylinder bounded by two planes, $C = \{ \rrr = (x,y,z) : -z-1 \le x \le z+1,\, y^2 + z^2 \le 1 \}$, and take the ridge point $\rrr_0 = (0,0,-1) \in \pl C$. The outward vectors of the corresponding dihedral angle are $\eee_1 = \frac{1}{\sqrt{2}} (-1,0,-1)$ and $\eee_2 = \frac{1}{\sqrt{2}} (1,0,-1)$. We take $\eee = (0,0,-1) = \frac{1}{\sqrt{2}} \eee_1 + \frac{1}{\sqrt{2}} \eee_2$. See Fig.~\ref{fig02}.

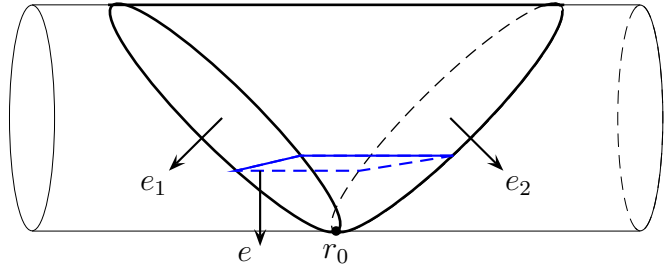
\begin{figure}[h]
\begin{picture}(0,120)
\scalebox{1}{
\rput(7.5,0.75){
\psline[linewidth=0.2pt](-4,0)(4,0)
\psline[linewidth=0.2pt](-4,3)(4,3)
\psellipse[linewidth=0.2pt](-4,1.5)(0.3,1.5)
\psellipse[linewidth=0.2pt](4,1.5)(0.3,1.5)
\rput(0,-0.3){$\rrr_0$}
\pspolygon[fillstyle=solid,fillcolor=white,linewidth=0pt,linecolor=white](3.5,0.05)(4,0.05)(4,2.95)(3.5,2.95)
\psellipse[linewidth=0.2pt,linestyle=dashed](4,1.5)(0.3,1.5)
\rput{-45}(0.4,0.44){\psellipse[linewidth=1pt](0,1.5)(0.4242,2.121)}
\pspolygon[fillstyle=solid,fillcolor=white,linewidth=0pt,linecolor=white](-1.5,0.03)(0,0.03)(3,2.97)(1.5,2.97)
\rput{-45}(0.4,0.44){\psellipse[linewidth=0.5pt,linestyle=dashed](0,1.5)(0.4242,2.121)}
\rput{45}(-0.4,0.44){\psellipse[linewidth=1pt](0,1.5)(0.4242,2.121)}
\psline[linewidth=1pt](-3,3)(3,3)
\psdots(0,0)
\psline[linewidth=0.8pt,linecolor=blue](1.54,1)(-0.47,1)(-1.32,0.8)
\psline[linewidth=0.8pt,linecolor=blue,linestyle=dashed](1.54,1)(-0.47,1)(-1.32,0.8)(0.31,0.8)(1.54,1)
\psline[arrows=->,arrowscale=1.5,linewidth=0.8pt](-1.5,1.5)(-2.2,0.8)
\rput(-2.4,0.6){$\eee_1$}
\psline[arrows=->,arrowscale=1.5,linewidth=0.8pt](1.5,1.5)(2.2,0.8)
\rput(2.4,0.6){$\eee_2$}
\psline[arrows=->,arrowscale=1.5,linewidth=0.8pt](-1,0.8)(-1,-0.2)
\rput(-1.2,-0.3){$\eee$}
}}
\end{picture}
\caption{$C$ is the part of a cylinder bounded by two planes through $\rrr_0$.}
\label{fig02}
\end{figure} 

We have $C_t = C \cap \{ z \le -1 + t \}$, and $B_t$ is the rectangle $-t \le x \le t,\ -\sqrt{2t - t^2} \le y \le \sqrt{2t - t^2}$ in the plane $z = -1 + t$. The surface $S_t$ is the union of three parts, $S_t = S_t^1 \cup S_t^2 \cup S_t^0$, where $S_t^1$ is the planar domain of equations $x = -z - 1,\, x \ge -t,\, (x+1)^2 + y^2 \le 1$ with the outward normal $\eee_1$ and $S_t^2$ is the planar domain of equations $x = z + 1,\, x \le t,\, (x-1)^2 + y^2 \le 1$ with the outward normal $\eee_2$.  $S_t^1$ and $S_t^2$ are segments of ellipses in the planes $x = -z - 1$ and $x = z + 1$, respectively. The surface $S_t^0$ is the graph of the function $z(x,y) = -\sqrt{1 - y^2}$ defined on the domain $-(1 - \sqrt{1-y^2}) \le x \le 1 - \sqrt{1-y^2}$, $-\sqrt{2t - t^2} \le y \le \sqrt{2t - t^2}$; see Fig.~\ref{fig03}. The outward normals to $S_t^0$ are contained in a neighborhood of $\eee$ shrinking to $\eee$ when $t \to 0$.

      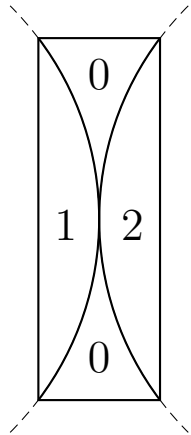
\begin{figure}[h]
\begin{picture}(0,160)
\scalebox{0.8}{
\rput(9,3.5){
\psarc[linewidth=0.5pt,linestyle=dashed](-5,0){5}{-45}{45}
\psarc[linewidth=0.5pt,linestyle=dashed](5,0){5}{135}{225}
\psarc[linewidth=1pt](-5,0){5}{-36.87}{36.87}
\psarc[linewidth=1pt](5,0){5}{143.13}{216.87}
\pspolygon[linewidth=1pt](1,-3)(1,3)(-1,3)(-1,-3)
\rput(-0.55,-0.1){\scalebox{1.8}{$1$}}
\rput(0.55,-0.1){\scalebox{1.8}{$2$}}
\rput(0,-2.3){\scalebox{1.8}{$0$}}
\rput(0,2.4){\scalebox{1.8}{$0$}}
}}
\end{picture}
\caption{The projections of $S_t^1$, $S_t^2$, and $S_t^0$ on the $xy$-plane, marked by $"1",\, "2"$, and $"0"$, respectively.}
\label{fig03}
\end{figure} 

The areas of the surfaces are easy to calculate,
$$
|B_t| = 2t\cdot 2\sqrt{2t - t^2} = 4\sqrt 2\, t^{3/2}(1 + o(1)),\ t \to 0,
$$
$$
|S_t^0| = \frac{4\sqrt 2}{3}\, t^{3/2}(1 + o(1)), \quad \text{and} \quad |S_t^1| = |S_t^2| = \frac{8}{3}\, t^{3/2}(1 + o(1)),\ t \to 0.
$$
It follows that $\nu_t$ converges to the measure
$$\nu_* = \frac{\sqrt{2}}{3} \del_{\eee_1} + \frac{\sqrt{2}}{3} \del_{\eee_2} + \frac{1}{3} \del_{\eee}$$
supported on the three-point set $\{ \eee_1,\, \eee_2,\, \eee \}$.
\end{primer}


Thus, the limiting measure is not necessarily the sum of two atoms in the form \eqref{limit12}. Moreover, $\nu_t$ may not converge, as shown in the following example.

\begin{primer}\label{pr4}
Let $C$ be defined by $C = \{ \rrr = (x,y,z) : -z \le x \le z,\ \gam(y) \le z \le 1 \}$, where $\gam : \RRR \to \RRR$ is a convex even function such that $\gam(0) = 0$ and $\gam(y) > 0$ for $y \ne 0$. The point $\rrr_0 = (0,0,0)$ is a ridge point, the corresponding tangent cone is given by \eqref{dih angle} with $\eee_1 = \frac{1}{\sqrt{2}} (-1,0,-1)$ and $\eee_2 = \frac{1}{\sqrt{2}} (1,0,-1)$, and we take $\eee = (0,0,-1) = \frac{1}{\sqrt{2}} \eee_1 + \frac{1}{\sqrt{2}} \eee_2$. That is, $\eee_1,\, \eee_2,$ and $\eee$ are as in the previous example.

One has $C_t = C \cap \{ z \le t \}$, and for $t \le 1$, $B_t$ is the rectangle $-t \le x \le t,\ \gam(y) \le t$ in the plane $z = t$. Again we have $S_t = S_t^1 \cup S_t^2 \cup S_t^0$, where $S_t^1$ is the planar domain of equations $x = -z$, $-t \le x \le -\gam(y)$ with the outward normal $\eee_1$, $S_t^2$ is the planar domain of equations $x = z$, $\gam(y) \le x \le t$ with the outward normal $\eee_2$, and $S_t^0$ is the graph of the function $z(x,y) = \gam(y)$ defined on the domain $-\gam(y) \le x \le \gam(y)$, $\gam(y) \le t$. The outward normals to $S_t^0$ are contained in a neighborhood of $\eee$ shrinking to $\eee$ when $t \to 0$.

The projections of $S_t^1$, $S_t^2$, and $S_t^0$ on the $xy$-plane look like those shown in Fig.~\ref{fig03}. If the family $\big( |S_t^-|/|B_t|,\, |S_t^+|/|B_t|,\, |S_t^0|/|B_t| \big)$ has a partial limit equal to $(b^-, b^+, b^0)$ as $t \to 0$, then the family of measures $\nu_t$ has the partial limit $\nu_* = b^- \del_{\eee_1} + b^+ \del_{\eee_2} + b^0 \del_{\eee}$.

Let the graph $x = \gam(y)$, $y > 0$ be a broken line with infinitely many segments and with the vertices $(x_i, y_i)$, $i \ge i_0$ that are defined inductively. The initial values $x_{i_0} > 0$, $y_{i_0} > 0$ are arbitrary. Take $0 < a < b < 1$. Given $(x_i, y_i)$, put $x_{i+1} = x_i/i$; if $i$ is even then put $y_{i+1} = ay_i$, and if $i$ is odd then put $y_{i+1} = by_i$. The initial value $i_0$ is taken sufficiently large, so that the corresponding function $\gam$ is convex.

Taking $t = x_i$, we have $|B_{x_i}| = 4x_i y_i$; for $i$ even
$$
|S_{x_i}^0| = 2(1-a)\, x_i y_i (1 + o(1)) \ \ \, \text{and} \ \ \, |S_{x_i}^1| = |S_{x_i}^2| = \sqrt 2 (1+a)\, x_i y_i (1 + o(1)), \ \, i \to \infty,
$$
and  for $i$ odd
$$
|S_{x_i}^0| = 2(1-b)\, x_i y_i (1 + o(1)) \ \ \, \text{and} \ \ \, |S_{x_i}^1| = |S_{x_i}^2| = \sqrt 2 (1+b)\, x_i y_i (1 + o(1)), \ \, i \to \infty.
$$
It follows that there are at least two partial limits of $\nu_t$,
$$
\nu_*^1 = \lim_{\stackrel{i\text{ even}}{i\to\infty}} \nu_{x_i} = \frac{1 + a}{2\sqrt 2} \big( \del_{\eee_1} + \del_{\eee_2} \big) + \frac{1 - a}{2}\, \del_\eee
$$ and $$
 \nu_*^2 = \lim_{\stackrel{i\text{ odd}}{i\to\infty}} \nu_{x_i} = \frac{1 + b}{2\sqrt 2} \big( \del_{\eee_1} + \del_{\eee_2} \big) + \frac{1 - b}{2}\, \del_\eee.
$$
\end{primer}

The following Theorems \ref{t2 segment}, \ref{t1}, and \ref{t1b} describe the limiting behavior of $\nu_t$.

Note that the set $B_0 = C \cap \{ \rrr : (\rrr - \rrr_0,\, \eee) = 0 \}$ is the intersection of the body's surface and the edge of the dihedral angle. It is either a non-degenerate line segment containing $\rrr_0$, or the singleton $\{ \rrr_0 \}$. If it is a line segment then $\nu_t$ converges and the limit is the sum of two atoms given by \eqref{limit12}, as claims Theorem \ref{t2 segment}. In general (when this intersection may be both a line segment and the singleton) the limiting behavior of $\nu_t$ is more complicated and is described by Theorems \ref{t1} and \ref{t1b}.

\begin{theorem}\label{t2 segment}
If $B_0$ is a non-degenerate line segment then $\nu_t \xrightarrow[t \to 0]{} \lam_1 \del_{\eee_1} + \lam_2 \del_{\eee_2}$.
\end{theorem}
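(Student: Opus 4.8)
The plan is to reduce the three-dimensional statement to the planar asymptotics of Remark~\ref{rem0}: one cuts $C$ by planes parallel to the edge of the tangent dihedral angle, applies the two-dimensional result to each slice, and integrates along the edge.

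Normalize so that $\rrr_0=\vec 0$, and let $\vvv$ be a unit vector along the edge of the tangent dihedral angle, so that $\vvv\perp\eee_1$, $\vvv\perp\eee_2$, and hence $\vvv\perp\eee$ and $\vvv\perp\Gam$. Since $B_0$ is a segment it lies on the line $\RRR\vvv$; write $B_0=\{\zeta\vvv:-a\le\zeta\le b\}$ with $a,b\ge0$, $a+b>0$. For $\zeta\in\RRR$ put $C^{(\zeta)}=C\cap\{(\rrr,\vvv)=\zeta\}$, and let $\hat C^{(\zeta)}\subset\vvv^\perp\cong\RRR^2$ be its image under the orthogonal projection $\pi$ onto $\vvv^\perp$; note that $\pi$ fixes $\eee_1,\eee_2,\eee$ and restricts to an isometry of the slicing plane. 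The key step is the claim that \emph{for every $\zeta\in(-a,b)$, the point $\hat\rrr_0=\pi(\rrr_0)=\vec 0$ is a singular point of $\hat C^{(\zeta)}$ whose tangent angle equals $T:=\{\tilde\rrr:(\tilde\rrr,\eee_1)\le0,\ (\tilde\rrr,\eee_2)\le0\}$, the same for all such $\zeta$.} To prove it, observe that for $w=\mu_1\eee_1+\mu_2\eee_2$ with $\mu_1,\mu_2\ge0$ the plane $\{(\rrr,w)=0\}$ supports $C$ (because $\eee_1,\eee_2\in\Gam$) and contains all of $B_0$ (because $\vvv\perp\eee_i$); hence, for $q_\zeta=\zeta\vvv$ lying in the relative interior of the face $B_0$, the normal cone $N_C(q_\zeta)$ contains $\mathrm{pos}(\eee_1,\eee_2)$, and evaluating the inequalities defining $N_C(q_\zeta)$ at the two endpoints of $B_0$ forces $(w,\vvv)=0$ for every $w\in N_C(q_\zeta)$, so $N_C(q_\zeta)=\mathrm{pos}(\eee_1,\eee_2)$ and $T_C(q_\zeta)=D:=\{v:(v,\eee_1)\le0,(v,\eee_2)\le0\}=T\times\RRR\vvv$. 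Then $\hat C^{(\zeta)}-\hat\rrr_0\subseteq D\cap\vvv^\perp=T$, which gives the upper bound for the tangent angle; for the matching lower bound, if some $\hat w\notin\mathrm{pos}(\eee_1,\eee_2)=T^\circ$ belonged to $N_{\hat C^{(\zeta)}}(\hat\rrr_0)$, pick $\tilde u\in\mathrm{int}(T)$ with $(\hat w,\tilde u)>0$; since $\tilde u$ is an interior point of $T_C(q_\zeta)=D$ one can write $\tilde u=s(p-q_\zeta)$ with $s>0$ and $p\in\mathrm{int}\,C$, and then $p\in\mathrm{relint}\,C^{(\zeta)}$ while $(\hat w,\pi(p)-\hat\rrr_0)=s^{-1}(\hat w,\tilde u)>0$, contradicting $\hat w\in N_{\hat C^{(\zeta)}}(\hat\rrr_0)$.

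Fix a continuous function $f$ on $S^2$. Apply the coarea formula to $\Phi(\rrr)=(\rrr,\vvv)$ on the surface $S_t$ and on the planar set $B_t$; the tangential gradient of $\Phi$ equals $\sqrt{1-(\nnn_\rrr,\vvv)^2}$ on $S_t$ and $1$ on $B_t$ (because $\vvv\perp\eee$), so
\[
\int_{S_t} f(\nnn_\rrr)\,d|\rrr|=\int_{\RRR}\bigg(\int_{S_t\cap\Phi^{-1}(\zeta)}\frac{f(\nnn_\rrr)}{\sqrt{1-(\nnn_\rrr,\vvv)^2}}\,d\mathcal H^1\bigg)d\zeta,\qquad |B_t|=\int_{\RRR}|\hat B_t^{(\zeta)}|\,d\zeta,
\]
where $\hat B_t^{(\zeta)}=\hat C^{(\zeta)}\cap\{(\tilde\rrr,\eee)=-t\}$, and the contribution of the finitely many flat faces of $\pl C$ perpendicular to $\vvv$ (where the coarea density vanishes) is treated separately — it is $O(t^2)$, since each such face meets $S_t$ in a planar cap near $P_1$ or $P_2$. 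Under $\pi$ the slice $S_t\cap\Phi^{-1}(\zeta)$ becomes the cap curve $\pl\hat C^{(\zeta)}\cap\{(\tilde\rrr,\eee)\ge-t\}$, and on the part of $S_t$ bounded away from $P_1,P_2$ the normals converge uniformly to $\Gam\subset\vvv^\perp$ (upper semicontinuity of the normal cone, together with the claim above), so there $\nnn_\rrr$ differs from the $2$D normal $\hat\nnn$ of $\hat C^{(\zeta)}$ by $o(1)$ and $\sqrt{1-(\nnn_\rrr,\vvv)^2}=1+o(1)$, uniformly. Consequently, for each fixed $\zeta\in(-a,b)$, dividing the inner integral by $|\hat B_t^{(\zeta)}|$ yields $\int_{S^1}f\,d\hat\nu_t^{(\zeta)}+o(1)$, where $\hat\nu_t^{(\zeta)}$ is the planar normalized measure of $\hat C^{(\zeta)}$ defined as in Remark~\ref{rem0}; by that remark (with the same $\eee_1,\eee_2,\eee$, hence by \eqref{cent} the same $\lam_1,\lam_2$) this tends to $\lam_1f(\eee_1)+\lam_2f(\eee_2)$. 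Writing $\int f\,d\nu_t$ as the $|\hat B_t^{(\zeta)}|$-weighted average over $\zeta$ of these quantities, and using dominated convergence together with a uniform bound on the planar total masses $|\hat S_t^{(\zeta)}|/|\hat B_t^{(\zeta)}|$ over compact $\zeta$-subintervals of $(-a,b)$ (where $(\hat\nnn,\eee)$ is bounded below away from $0$), one concludes $\int f\,d\nu_t\to\lam_1f(\eee_1)+\lam_2f(\eee_2)$, that is $\nu_t\xrightarrow[t\to0]{}\lam_1\del_{\eee_1}+\lam_2\del_{\eee_2}$.

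The main obstacle is the transition from the pointwise-in-$\zeta$ two-dimensional convergence to the integrated statement near the ends $\zeta=-a,b$ of the range and near the points $P_1,P_2$, where the slices $\hat C^{(\zeta)}$ need not have tangent angle $T$ and the coarea density may degenerate. I would handle this by a two-parameter limit: fix small $\delta>0$, carry out the argument above on $\zeta\in[-a+\delta,b-\delta]$ and on the portion of $S_t$ at distance $\ge\delta$ from $\{P_1,P_2\}$ (on which all the above estimates are uniform), and bound the remaining contribution by $C\delta\,|B_t|$ using only the uniform boundedness of the $2$D cap total mass of $\hat C^{(\zeta)}$ and of the planar caps near $P_1,P_2$; then let $t\to0$ and finally $\delta\to0$. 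One also needs $|B_t|\ge c\,t$ for small $t$ (so the denominator does not collapse), which holds because $B_0$ is a non-degenerate segment and $C$ contains an interior point strictly on the side $(\rrr,\eee)<0$. A minor additional point is a mild uniformity in $\zeta$ of the elementary estimates underlying Remark~\ref{rem0}, which follows from the continuous dependence of $\hat C^{(\zeta)}$ on $\zeta$ in the Hausdorff metric.
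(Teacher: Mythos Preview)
Your approach is correct and takes a genuinely different route from the paper's. The paper works directly in three dimensions: it first establishes $|B_t|=c_0\,l\,t(1+o(1))$ by comparing $B_t$ with rectangles in $\Pi^t$, then splits $S_t$ by the bisector plane $\Pi_0$ of the dihedral angle into halves $S_t^1,S_t^2$ (plus small lateral pieces near the endpoints $M,N$ of $B_0$), compares $C_t\cap\text{Prism}_{\ve,i}$ with an explicit triangular prism to compute the barycenter $\int n\,d\nu_{t,\ve}^i$, and uses a Chebyshev-type inequality to force $\nu_{t,\ve}^i$ to concentrate at $\eee_i$. Your slicing-plus-coarea reduction to the planar asymptotics of Remark~\ref{rem0} is more modular and gives a clean conceptual reason why the three-dimensional limit coincides with the two-dimensional one; the price is the coarea bookkeeping and the need for mild uniformity in $\zeta$ of the 2D estimates. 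Both proofs share the same endpoint mechanism: excise a $\delta$- (or $\ve$-) neighborhood of the endpoints of $B_0$, show its contribution to $\nu_t$ is $O(\delta)$, and send $\delta\to0$ after $t\to0$. One place where your write-up is loose is precisely that lateral bound: invoking ``uniform boundedness of the 2D cap total mass of $\hat C^{(\zeta)}$'' is not enough near $\zeta=-a,b$, where both the coarea factor $\sqrt{1-(\nnn_\rrr,\vvv)^2}$ and the ratio $|\hat S_t^{(\zeta)}|/|\hat B_t^{(\zeta)}|$ may degenerate simultaneously; the clean fix is the paper's direct estimate --- the lateral part of $S_t$ lies inside two prisms of total surface area $O(\delta t)=O(\delta|B_t|)$ --- which bypasses coarea on that region altogether. (Your claim that faces of $\pl C$ orthogonal to $\vvv$ meet $S_t$ only in caps of area $O(t^2)$ is correct, since any such cap sits inside the triangle cut from the dihedral angle by $\Pi^t$.)
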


Theorem \ref{t1} states that the support of each partial limit of $\nu_t$ is contained in the arc $\Gam$ and contains its endpoints. Theorem \ref{t1b} states that, vice versa, each compact subset of an arc containing its endpoints can be realized as the support of the limit of a family of measures $\nu_t$.

\begin{theorem}\label{t1}
The set of partial limits of $\nu_t$ as $t \to 0$ is nonempty, and each partial limit  is supported on a closed subset of $\Gam$ containing $\eee_1$ and $\eee_2.$
\end{theorem}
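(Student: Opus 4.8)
\emph{Overall approach and setup.} Place $\rrr_0$ at the origin, let $\mathbf f$ be a unit vector along the edge of the dihedral angle \eqref{dih angle} (so $\mathbf f\perp\eee_1,\eee_2$), and let $D=\{\rrr:(\rrr,\eee_1)\le 0,\ (\rrr,\eee_2)\le 0\}$ be the solid dihedral angle. Since $\Pi_1,\Pi_2$ support $C$, one has $C\subset D$; and since the tangent cone at $\rrr_0$ is exactly $D$, there is the standard consequence that for every compact $K\subset\operatorname{int}D$ there is $\tau_K>0$ with $\tau K\subset C$ for $0<\tau\le\tau_K$ --- and the same holds, with the \emph{same} cone $D$, at every relative-interior point of $B_0$ (translation along $\mathbf f$ fixes $D$). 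Moreover $C_t\subset D\cap\{(\rrr,\eee)\ge-t\}$, which is a triangle of diameter $O(t)$ times the line $\RRR\mathbf f$, so $B_t$ lies in a slab of width $O(t)$ transverse to $\mathbf f$. Finally, $S_t$ is --- up to faces of $C_t$ parallel to $\eee$, whose normals lie in $\Gam$ and whose total area is $O\!\left(t\cdot\operatorname{Per}B_t\right)$ --- the graph of a concave function $h_t$ over $B_t$ in the direction $\eee$, with $0\le h_t\le t$ and $h_t=0$ on $\pl B_t$.

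\emph{Step 1 (the set of partial limits is nonempty).} It suffices to bound the total masses $\nu_t(S^2)=|S_t|/|B_t|$ uniformly for small $t$. Applying the cone property above to a ball $\bar B(\mathbf p_0,\rho_0)\subset\operatorname{int}D$ with $(\mathbf p_0,\eee)=-1$ shows that $C_t$ contains, for small $t$, a half-ball of radius $\rho_0 t$ with flat face on $\Pi^t$; its equatorial disc lies in $B_t$, so $\operatorname{inradius}B_t\ge\rho_0 t$ and hence $\operatorname{Per}B_t\le 2|B_t|/(\rho_0 t)$. The coarea inequality gives $|S_t|-|B_t|\le\int_{B_t}|\nabla h_t|=\int_0^{\max h_t}\!\mathcal H^1(\{h_t=s\})\,ds\le(\max h_t)\operatorname{Per}B_t\le t\operatorname{Per}B_t$ for the graph part, and the parallel faces add at most another $t\operatorname{Per}B_t$; thus $|S_t|\le(1+4/\rho_0)|B_t|$. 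Weak-$*$ compactness of uniformly bounded measures on the compact space $S^2$ now yields nonemptiness of the set of partial limits.

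\emph{Step 2 (each partial limit is supported in $\Gam$).} The set $B_0$ is either $\{\rrr_0\}$ or a segment of the edge of $D$. At a relative-interior point of $B_0$ the normal cone of $C$ is squeezed between the cone over $\Gam$ (which it contains, as it contains $\eee_1$ and $\eee_2$) and the normal cone of $D$ along its edge (which is precisely the cone over $\Gam$), hence equals the latter, and its trace on $S^2$ is $\Gam$. Since $C_t\to B_0$ in the Hausdorff metric and the normal bundle is closed, for every neighbourhood $V\supset\Gam$ and every $\delta'>0$ one has $\nnn_\rrr\in V$ for all $\rrr\in S_t\cap\pl'C$ outside the $\delta'$-balls about the (at most two) endpoints of $B_0$, once $t$ is small. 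Using that $S_t$ is a graph over $B_t$ with $h_t\le t$ and that $B_t$ lies in an $O(t)$-slab transverse to $\mathbf f$, the area of $S_t$ inside such a $\delta'$-ball is $O\!\left((\delta'+t)t\right)$, while $|B_t|\ge c\,t\cdot(\text{$\mathbf f$-extent of }B_t)$ and this $\mathbf f$-extent is bounded below when $B_0$ is a segment (when $B_0$ is a point there are no such endpoints). Hence $\limsup_{t\to 0}\nu_t(S^2\setminus V)=O(\delta')$; letting $\delta'\to 0$ and using the portmanteau inequality for the open set $S^2\setminus\Gam$ gives $\nu_*(S^2\setminus\Gam)=0$.

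\emph{Step 3 (the endpoints $\eee_1,\eee_2$ lie in the support --- the main obstacle).} By symmetry treat $\eee_1$; it is enough to prove $\liminf_{t\to 0}\nu_t(N)\ge\delta(N)>0$ for every spherical cap $N$ about $\eee_1$, since then $\nu_*(N)\ge\limsup\nu_{t_i}(N)>0$ along the defining subsequence and, $\operatorname{supp}\nu_*$ being closed, $\eee_1\in\operatorname{supp}\nu_*$. The geometric input --- this is where the real work lies --- is that, because the tangent cone equals $D$ all along $B_0$, the body $C_t$ contains a ``cylinder'' $\mathcal Z_t$ tangent to $\Pi_1$: a disc of radius $\asymp t$ in $\operatorname{span}(\eee_1,\eee_2)$ tangent to the line $\Pi_1\cap\operatorname{span}(\eee_1,\eee_2)$, swept along $\mathbf f$ over a length $\ell_t\gtrsim(\text{$\mathbf f$-extent of }B_t)$; this is got by convex-hulling the translates (along $B_0$) of scaled tangent balls from $\operatorname{int}D$, and it matches $|B_t|\lesssim t\cdot(\text{$\mathbf f$-extent of }B_t)$. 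Represent the $\eee_1$-facing part of $\pl C_t$ as a graph $\tau=s_t(q)$ over its orthogonal projection onto $\Pi_1$, where $\tau=-(\rrr,\eee_1)\ge 0$: then $s_t$ is convex, $s_t\ge 0$, the outward normal there makes angle $\arctan|\nabla s_t|$ with $\eee_1$, while $s_t$ vanishes along the whole segment where $\mathcal Z_t$ touches $\Pi_1$ and $s_t(q)\le s_t^{\mathcal Z_t}(q)$ on the shadow of $\mathcal Z_t$. A one–dimensional convexity estimate (bound the slope of $s_t$ along a chord by the quadratic cap function $s_t^{\mathcal Z_t}$) gives $|\nabla s_t|\le\varepsilon$ on a rectangle of size $\asymp t\varepsilon$ (transverse to $\mathbf f$) by $\asymp\ell_t$ (along $\mathbf f$) about that touching segment; these graph points lie on $S_t$ (not on $B_t$), so, with $\varepsilon$ fixed by $N$,
$$\nu_t(N)\ \ge\ \frac{1}{|B_t|}\,\bigl(\text{area of that rectangle}\bigr)\ \gtrsim\ \frac{t\varepsilon\cdot\ell_t}{|B_t|}\ \gtrsim\ \varepsilon\ >\ 0$$
uniformly in small $t$. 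The delicate points, and the only places where more than bookkeeping is needed, are the verification that such a cylinder really fits in $C_t$ (the uniform tangent-cone property along $B_0$, together with $C\subset D$ and $C_t\subset D\cap\{(\rrr,\eee)\ge-t\}$) and that the surface points so produced genuinely belong to $S_t$.
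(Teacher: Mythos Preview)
Your Steps 1 and 2 are basically sound, though Step 1 has a technical wrinkle: $S_t$ need not be a graph over $B_t$ in the direction $\eee$, because points of $\Gam$ can satisfy $(\nnn,\eee)\le 0$ when the arc from $\eee_1$ to $\eee_2$ is long (i.e., when one of the angles $\al,\bt$ exceeds $\pi/2$). The paper sidesteps this by projecting along $\eee_1+\eee_2$, for which $(\nnn,\eee_1+\eee_2)>0$ everywhere on $\Gam$; you can patch your argument the same way.

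The genuine gap is in Step 3. Your cylinder $\mathcal Z_t$ tangent to $\Pi_1$ with $\mathbf f$-length $\ell_t\gtrsim(\text{$\mathbf f$-extent of }B_t)$ need not exist. Take $C=\{(x,y,z):|x|\le z-y^2,\ 0\le z\le 1\}$, $\rrr_0=0$, $\eee=(0,0,-1)$, $\eee_1=\tfrac1{\sqrt2}(-1,0,-1)$, $\eee_2=\tfrac1{\sqrt2}(1,0,-1)$, $\mathbf f=(0,1,0)$. Then $B_0=\{\rrr_0\}$, the $\mathbf f$-extent of $B_t$ is $2\sqrt t$, but $C_t\cap\Pi_1=\{(-z,0,z):0\le z\le t\}$ meets each line parallel to $\mathbf f$ in at most one point, so any cylinder along $\mathbf f$ tangent to $\Pi_1$ and contained in $C_t$ has zero $\mathbf f$-length. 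Your device ``convex-hull the translates along $B_0$'' produces nothing beyond a single ball when $B_0$ is a point, and even relaxing tangency to distance $t\epsilon$ from $\Pi_1$ only buys length $O(\sqrt{t\epsilon})\ll\sqrt t$. Hence the chain $\nu_t(N)\gtrsim t\varepsilon\,\ell_t/|B_t|\gtrsim\varepsilon$ breaks at the second inequality precisely in the case $B_0=\{\rrr_0\}$, which is the only case left after Theorem~\ref{t2 segment}.

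The paper's argument for this step is different in kind: it does not seek a region with \emph{pointwise} normal near $\eee_1$. Instead it carves out a sub-body $C_t(\theta)^{(1)}$ bounded by $\Pi^t$, the plane $\Pi_{[t]}$ through the far point $D_t$ of $B_t$ and the edge, and two planes perpendicular to the edge at $\mathbf f$-distance $\theta\,d(t)$ apart (with $d(t)$ essentially the $\mathbf f$-extent of $B_t$). Applying $\int_{S^2}\nnn\,d\nu=\vec 0$ to this sub-body yields the vector integral over the corresponding surface patch $S^1_{t,\theta}$ as a positive multiple of $\eee_1$ plus an $O(\theta)$ correction, with normalized total mass bounded below by a constant times $\theta$. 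Since every partial limit is supported on $\Gam$, a measure on the arc whose barycentre lies $O(\theta)$-close to the ray through $\eee_1$ must charge every neighbourhood of $\eee_1$; letting $\theta\to 0$ gives $\eee_1\in\operatorname{spt}\nu_*$. This barycentre method is exactly what allows the proof to work without any inscribed-cylinder control along the full $\mathbf f$-extent of $B_t$.
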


\begin{theorem}\label{t1b}
Let $\eee_1$ and $\eee_2$ be two unit vectors, $\eee_1 \ne \pm \eee_2$, and let $\eee = \lam_1 \eee_1 +\lam_2 \eee_2$, $\lam_1 > 0$, $\lam_2 > 0$, $|\eee| = 1$. Assume that $K$ is a closed subset of the arc $\Gam = \Gam_{\eee_1,\eee_2}$ containing its endpoints $\eee_1$ and $\eee_2$,\, $\{ \eee_1,\, \eee_2 \} \subset K \subset \Gam$.  Then there exist a convex body $C$ and a ridge point $\rrr_0$ on its surface such that the tangent cone at $\rrr_0$ is given by the inequalities $(\rrr - \rrr_0,\, \eee_1) \le 0, \ (\rrr - \rrr_0,\, \eee_2) \le 0$ and the measure $\nu_t = \nu_{t,\eee, \rrr_0, C}$ weakly converges as $t \to 0$ to a measure $\nu_*$ such that spt$\,\nu_* = K$.
\end{theorem}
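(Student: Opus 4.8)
The plan is to first reduce to the special vectors of Examples~\ref{pr3}--\ref{pr4} by a linear transformation, and then to build a ``wedge with many facets'' realizing the prescribed support.

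\emph{Step 1 (reduction to a model triple).} Take $\eee_1^0=\tfrac1{\sqrt2}(-1,0,-1)$, $\eee_2^0=\tfrac1{\sqrt2}(1,0,-1)$, $\eee^0=(0,0,-1)$, for which the tangent cone \eqref{dih angle} at $\rrr_0=0$ is $\{z\ge|x|\}$. Let $L_0$ be the linear isomorphism of the $xz$-plane onto $\mathrm{span}(\eee_1,\eee_2)$ with $L_0\eee_i^0=\lam_i\eee_i$; then $L_0\eee^0=\tfrac1{\sqrt2}(\lam_1\eee_1+\lam_2\eee_2)$ is parallel to $\eee$, so $\Phi(n):=L_0n/|L_0n|$ is a homeomorphism of $\Gam_{\eee_1^0,\eee_2^0}$ onto $\Gam_{\eee_1,\eee_2}$ with $\Phi(\eee_i^0)=\eee_i$. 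Extend $L_0$ to a linear isomorphism $M$ of $\RRR^3$ and put $A=(M^{\mathsf T})^{-1}$; recall that $A$ sends support planes of unit normal $n$ to support planes of normal direction $Mn$, multiplies the area of a surface patch of normal $n$ by $|\det A|\,|Mn|$ and the area of a section transverse to $\eee^0$ by the constant $|\det A|\,|M\eee^0|$, and rescales the cutting levels linearly. Hence if a convex body $C^0$ has ridge point $\rrr_0$ with tangent cone $\{z\ge|x|\}$ and $\nu_{t,\eee^0,\rrr_0,C^0}\xrightarrow[t\to0]{}\nu_*^0$, then $AC^0$ has ridge point $A\rrr_0$ with tangent cone $\{(\rrr-A\rrr_0,\eee_1)\le0,\ (\rrr-A\rrr_0,\eee_2)\le0\}$, and $\nu_{t,\eee,A\rrr_0,AC^0}$ converges weakly to the push-forward (up to a constant factor) of $|Mn|\,d\nu_*^0(n)$ under $n\mapsto Mn/|Mn|$, so its support is $\Phi(\mathrm{spt}\,\nu_*^0)$. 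It therefore suffices to construct $C^0$ with $\mathrm{spt}\,\nu_*^0=K_0:=\Phi^{-1}(K)$, a closed subset of $\Gam_{\eee_1^0,\eee_2^0}$ containing the two endpoints.

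\emph{Step 2 (the model body).} Parametrize $\Gam_{\eee_1^0,\eee_2^0}\setminus\{\eee_1^0,\eee_2^0\}$ by $\sigma\in(-1,1)$ via $m(\sigma):=(\sigma,0,-1)/\sqrt{1+\sigma^2}$ and write $K_0\setminus\{\eee_1^0,\eee_2^0\}=m(\widetilde K_0)$ with $\widetilde K_0\subset(-1,1)$ closed. Fix an enumeration $v_1,v_2,\dots$ of a countable dense subset of $\widetilde K_0$ (arranged so that values near $\pm1$ come late), set $\sigma_i:=v_k$ whenever $i\equiv2^{k-1}\pmod{2^{k}}$ (so each $v_k$ recurs along an arithmetic progression of density $2^{-k}$), and take $\theta_i:=1/i$. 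Define the finite convex function
\beq\label{Gdef}
G(x,y)=\max\Big(\,|x|,\ \sup_{i\ge1}\big(\sigma_ix+2\theta_i|y|-\theta_i^2\big)\Big)
\eeq
and put $C^0=\{(x,y,z):G(x,y)\le z\le1\}$. Then $G(x,0)=|x|$, $G(x,y)>|x|$ for $y\ne0$, $G\ge|x|$ everywhere; $C^0$ is a convex body, $\rrr_0=0$ is a ridge point, and its tangent cone is $\{z\ge|x|\}$ (the terms $2\theta_i|y|-\theta_i^2$ are $o(|x|+|y|)$ near $0$). Its lower boundary $z=G(x,y)$ is the union of the two flat faces $z=\mp x$, with outward normals $\eee_1^0$ and $\eee_2^0$, and of a countable family of flat facets $F_i$, one per $i$, each a genuine $2$-dimensional quadrilateral over a $y$-band centered near $|y|\approx\theta_i$; on $F_i$ one has $z=\sigma_ix\pm2\theta_i y-\theta_i^2$, so its outward normal is within $O(\theta_i)$ of $m(\sigma_i)\in\Gam_{\eee_1^0,\eee_2^0}$.

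\emph{Step 3 (asymptotics and support).} As $t\to0$ one gets $B_t\approx[-t,t]\times[-\sqrt t,\sqrt t]$, so $|B_t|\asymp t^{3/2}$, while $|S_t^1|=|S_t^2|\asymp t^{3/2}$ with definite limit ratios, giving fixed positive atoms at $\eee_1^0,\eee_2^0$. The facet $F_i$ lies in $\{z\le t\}$ precisely when $\theta_i^2\lesssim t$, i.e.\ $i\gtrsim t^{-1/2}$, and $|F_i|\asymp\theta_i^4/(1-\sigma_i^2)$ with $\sum_i|F_i|<\infty$ by the choice of enumeration, so the total mass stays bounded. The crucial point is that the slow decay $\theta_i=1/i$ makes the weighted sums $t^{-3/2}\sum_{i\gtrsim t^{-1/2}}f(\text{normal on }F_i)\,|F_i|$ converge as $t\to0$: since $\#\{i\le N:\sigma_i=v_k\}/N\to2^{-k}$, these are Ces\`aro-type averages with weight $\asymp i^{-4}$, hence convergent (a geometric choice like $\theta_i=2^{-i}$ would instead produce oscillation, as in Example~\ref{pr4}). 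Thus $\nu_t$ has a genuine weak limit $\nu_*$. By Theorem~\ref{t1}, $\mathrm{spt}\,\nu_*\subset\Gam_{\eee_1^0,\eee_2^0}$, and since the normals of the $F_i$ accumulate only on $\{m(v_k):k\}$ we get $\mathrm{spt}\,\nu_*\subset\{\eee_1^0,\eee_2^0\}\cup\overline{\{m(v_k):k\}}=K_0$. Conversely $\eee_1^0,\eee_2^0\in\mathrm{spt}\,\nu_*$, and for each $k$, the recurrence of $v_k$ along an arithmetic progression gives $\nu_t(\text{nbhd of }m(v_k))\gtrsim t^{-3/2}\sum_{i\gtrsim t^{-1/2},\,\sigma_i=v_k}|F_i|\gtrsim1$, whence $m(v_k)\in\mathrm{spt}\,\nu_*$; density of $\{m(v_k)\}$ in $K_0\setminus\{\eee_1^0,\eee_2^0\}$ then yields $K_0\subset\mathrm{spt}\,\nu_*$. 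So $\mathrm{spt}\,\nu_*=K_0$, and transporting back along $A$ completes the proof.

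\emph{Expected main obstacle.} The routine parts are verifying that $G$ in \eqref{Gdef} is finite and convex, that $C^0$ is a convex body with the stated tangent cone and facet structure, and the bookkeeping of Step~1. The delicate part is Step~3: tuning the decay of $\theta_i$ and the combinatorial distribution of the slopes $\sigma_i$ so that $\nu_t$ truly \emph{converges} (not merely has the right partial limits), and making precise both the approximation $B_t\approx[-t,t]\times[-\sqrt t,\sqrt t]$ and the facet-area estimate in the transition region $|y|\approx\sqrt t$ where the facets meet the flat faces.
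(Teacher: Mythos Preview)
Your construction is genuinely different from the paper's, and the difference is instructive. The paper first chooses a measure $\mu$ on $S^1$ with $\mathrm{spt}\,\mu=K$, uses Alexandrov's theorem to produce a planar convex curve $l$ with that surface measure, and then builds $C$ so that each slice $\{y=\mathrm{const}\}$ is the $y^2$-homothetic copy of the region bounded by $l$ and the sides of the angle. The key payoff of that scaling is exact self-similarity: after a change of variable $\xi=y/\sqrt t$, the auxiliary normalized measure $\tilde\nu_t$ induced by the curved part of $S_t$ is literally independent of $t$, so convergence of $\nu_t$ is automatic and $\mathrm{spt}\,\nu_*=K$ falls out directly from $\mathrm{spt}\,\mu=K$. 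Your piecewise-linear body is a discretized cousin of this (indeed when all $\sigma_i=0$ your $G(x,y)\approx\max(|x|,y^2)$, recovering Example~\ref{pr3}), but it has no exact scaling symmetry, and that is precisely why your Step~3 is hard.

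Step~3 as written is a sketch with real gaps. First, the facet-area estimate $|F_i|\asymp\theta_i^4/(1-\sigma_i^2)$ is not established: when the $\sigma_i$ vary, the facet $F_i$ is bounded not only by the two half-planes coming from $|x|$ but also by tilted lines $(\sigma_i-\sigma_j)x+2(\theta_i-\theta_j)y=\theta_i^2-\theta_j^2$ from \emph{all} other $j$, and for $j$ in a different arithmetic progression these cut $F_i$ obliquely; in particular the ``$y$-band'' picture (valid when all $\sigma_i$ coincide) no longer applies, and the same-$\sigma$ neighbours of $i$ are at distance $2^{k(i)}$, so the naive $y$-range is $\asymp 2^{k(i)}\theta_i^2$, not $\theta_i^2$. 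Second, even granting area estimates, the convergence claim is not proved: you need $t^{-3/2}\sum_i f(n_i)\,|F_i\cap S_t|$ to have a genuine limit, but each $F_i$ spans a range of $z$-values whose width depends on $k(i)$, so the transition region where facets are only partially below $z=t$ is not negligible and interacts with the combinatorics of the $\sigma_i$. Your Ces\`aro heuristic $N^3\sum_{i\ge N}i^{-4}a_i\to L$ would still require a dominated-convergence or summability argument uniform in $k$ (and a definition of $v_k$ for all $k$ when $\widetilde K_0$ is finite). None of this looks unfixable, but as it stands the proof of convergence is missing its core; the paper's self-similar construction sidesteps all of this.
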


\subsection{Application to Newton's least resistance problem}

The main motivation for this study came from Newton's problem of minimal resistance.

The problem is as follows. Consider a convex body $C$ moving forward in a homogeneous medium composed of point particles. The medium is extremely rare, so as mutual interaction of particles is neglected. There is no thermal motion of particles, that is, the particles are initially at rest. When colliding with the body, each particle is reflected elastically. As a result of collisions, there appears the drag force that acts on the body and slows down its motion.

Take a coordinate system with the coordinates $x,y,z$ connected with the body such that the $z$-axis is parallel and co-directional to the velocity of the body. Let the upper part of the body's surface be the graph of the concave function $u = u_C : \Om \to \RRR$, where $\Om = \Om_C$ is the projection of $C$ on the $xy$-plane. Then the $z$-component of the drag force equals $-2\rho v^2 F(u)$, where $v$ is the velocity of the body, $\rho$ is the density of the medium, and
\beq\label{resN}
F(u) = \int\!\!\!\int_\Om \frac{1}{1 + |\nabla u(x,y)|^2}\, dx dy.
\eeq
$F(u)$ is called the resistance of the body. See Fig.~\ref{figRes}.

       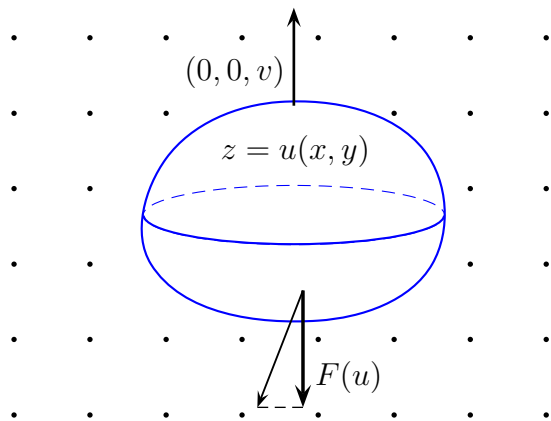
\begin{figure}[h]
\begin{picture}(0,165)
\scalebox{1}{
\rput(6.25,1.2){

\psellipse[linecolor=blue,linewidth=0.8pt](1.68,1.65)(1.99,0.4)
\pspolygon[fillstyle=solid,fillcolor=white,linewidth=0pt,linecolor=white](-0.4,1.65)(4,1.65)(4,2.5)(-0.4,2.5)
\psecurve[linecolor=blue,linewidth=0.8pt](-0.2,1)(2,0.25)(3.5,1)(3.4,2.5)(1.5,3.15)(0,2.4)(-0.2,1)(2,0.25)(3.5,1)
\psellipse[linecolor=blue,linewidth=0.4pt,linestyle=dashed](1.68,1.65)(1.99,0.4)
\psline[arrows=->,arrowscale=1.5,linewidth=1.3pt](1.8,0.65)(1.8,-0.9)
\psline[arrows=->,arrowscale=1.5,linewidth=0.7pt](1.8,0.65)(1.2,-0.9)
\psline[linestyle=dashed,linewidth=0.5pt](1.2,-0.9)(1.8,-0.9)
\rput(2.4,-0.5){$F(u)$}
\psline[arrows=->,arrowscale=1.5,linewidth=1pt](1.68,3.1)(1.68,4.4)
\rput(0.9,3.55){$(0,0,v)$}
\rput(1.7,2.5){$z = u(x,y)$}
\psdots[dotsize=2pt](-2,-1)(-2,0)(-2,1)(-2,2)(-2,3)(-2,4)
(-1,-1)(-1,0)(-1,1)(-1,2)(-1,3)(-1,4)
(0,-1)(0,0)(0,3)(0,4)
(1,-1)(1,0)(1,4)
(2,-1)(2,0)(2,4)
(3,-1)(3,0)(3,3)(3,4)
(4,-1)(4,0)(4,1)(4,2)(4,3)(4,4)
(5,-1)(5,0)(5,1)(5,2)(5,3)(5,4)

}}
\end{picture}
\caption{A convex body moving in a rarefied medium.}
\label{figRes}
\end{figure}

The problem consists of minimizing the resistance in a certain class of bodies. Initially the problem was considered by I. Newton \cite{N} in the class of bodies symmetric with respect to the $z$-axis that have fixed projections on the $z$-axis and on the $xy$-plane (of course, the latter one is a circle). In Fig.~\ref{figNewtonOpt} there is shown the solution in the case when the length of the projection of the body on the $z$-axis is equal to the diameter of its projection on the $xy$-plane.

       \begin{figure}[h]
\centering
\includegraphics[scale=0.45]{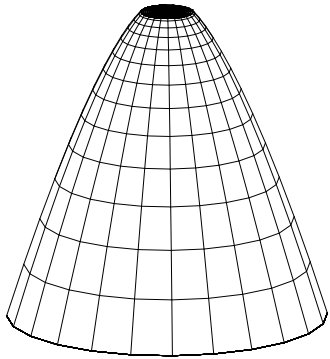}
\caption{A solution to the rotationally symmetric Newton problem.}
\label{figNewtonOpt}
\end{figure} 

The new interest to the problem was triggered in 1993 by the paper of Buttazzo and Kawohl \cite{BK}. Since then, the problem of minimal resistance was stated and studied in various classes of bodies, among them: convex rotationally symmetric bodies with the fixed arc length \cite{BelloniKawohl} and with the fixed volume \cite{BW prescribed volume}, convex bodies with developable lateral surface \cite{LP1}, bodies moving in a medium with thermal motion of particles \cite{PT thermal}. A generalized Newton problem, with the resistance being written down in the form of a surface integral, was considered in \cite{BG97}.

The problem was also generalized to various classes of nonconvex bodies. If each particle hits the body at most once (single impact condition, SIC) then the formula for resistance \eqref{resN} remains valid. The problem was studied in several classes of bodies satisfying SIC in the papers  \cite{BK,BFK,CL1,CL2,PlakhovSIA}.
If multiple particle-body collisions are allowed, there is no explicit analytic formula for the resistance (formula \eqref{resN} takes into account only the first reflections of particles with the body); however, the problem can be studied using the methods of billiards \cite{optimal roughening,bookP}.

The problems of minimal and maximal resistance were also studied in the case when the body, along with the translation, performs a rotational motion \cite{Nonlinearity,ARMA,rough2D,Magnus}. In this case an interesting analogy with the Magnus effect and with optical retroreflectors is found and discussed. Again, formula \eqref{resN} for the resistance is not valid here, but the problem can be studied using the methods of optimal mass transport.

Here we concentrate on the following problem: given a positive number $M$ and a planar convex body $\Om \subset \RRR^2$, minimize the functional \eqref{resN} in the class of concave functions $u : \Om \to [0,\, M]$. Equivalently, one can ask for minimizing the resistance in the class of convex bodies that have the projection $[0,\, M]$ on the $z$-axis and the projection $\Om$ on the $xy$-plane. The only difference as compared with the original Newton's problem is that the rotational symmetry is not required here.

It is the immediate and the earliest \cite{BK} generalization of the classical Newton's problem. However, despite the apparent simplicity of the statement of the problem, it remains open since 1993. Let us mention the known results. First, the solution $u_*$ to the problem exists \cite{BFK} and (if $\Om$ is a circle) does not coincide with the solution found by Newton in the rotationally symmetric case \cite{BrFK}. Second, if the solution $u_*$ is of class $C^2$ in an open subset of $\Om$, then det$\Big(\! \begin{array}{cc} u_{xx} & u_{xy}\\ u_{xy} & u_{yy} \end{array} \!\Big) = 0$ in this subset \cite{BrFK}. (Notice, however, that the existence of such an open set is not proved, so this statement may happen to describe a nonexistent object.) Third, if $u_*(x,y) < M$ and $\nabla u_*(x,y)$ exists, then $|\nabla u_*(x,y)| \ge 1$ \cite{BFK}.

A numerical study of this problem was made in \cite{LO} and \cite{W}  for the case when $\Om$ is a circle. According to the results of this study, the upper level set $L_M = \{ (x,y) : u_*(x,y) = M \}$ is either a line segment or a regular polygon, and the centers of $L_M$ and $\Om$ coincide. The set $L_M$ is a segment, if $M$ is greater than a certain value $M_c$ (according to \cite{W}, $M_c \approx 1.5$), and is a polygon otherwise, and the number of sides of the polygon is a piecewise constant monotone decreasing function of the parameter $M$  going to infinity as $M \to 0$. Besides, the set of singular points of $u_*$ seems to be the union of several (finitely many) curves in $\Om$.

The following fact was also established by numerical methods\footnote{G. Wachsmuth, personal communication.}: the supremum of the set of values $|\nabla u_*(x,y)|$ over all regular points $(x,y) \in \Om$ seems to be equal to 1, if $L_M$ nas nonempty interior, and smaller than 1, if $L_M$ is a segment. We prove the following theorem justifying the first part of this statement.

Let $M > 0$, and let $\Om \subset \RRR^2$ be a planar convex body.

\begin{theorem}\label{t3}
Let $u_*$ minimize the functional \eqref{resN} in the class of concave functions $u : \Om \to [0,\, M]$, and let the upper level set $L_M  = \{ (x,y) : u_*(x,y) = M \}$ have nonempty interior. Then for almost all points $(\bar x, \bar y) \in \pl L_M$,
$$
\lim_{\stackrel{(x,y) \to (\bar x, \bar y)}{(x,y) \in \Om \setminus L_M}} |\nabla u_*(x,y)| = 1.
$$
\end{theorem}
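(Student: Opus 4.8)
The plan is to realise the minimiser as a convex body and perturb it by a cutting plane near a ridge point lying over $\pl L_M$, using the asymptotics of the induced surface measure from the first part of the paper. Put $C=\{(x,y,z):(x,y)\in\Om,\ 0\le z\le u_*(x,y)\}$, a convex body whose upper surface is the graph of $u_*$. Writing $\eee_3=(0,0,1)$ and using $dx\,dy=(\nnn_\rrr\cdot\eee_3)\,dS$ on that graph, the resistance \eqref{resN} becomes the surface integral
\[
F(u_*)=\int_{S^2}\big((n\cdot\eee_3)_+\big)^3\,d\nu_C(n),\qquad (s)_+:=\max(s,0),
\]
the bottom and lateral parts of $\pl C$ contributing nothing since there $n\cdot\eee_3\le 0$; the same formula holds for every admissible competitor.

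Because $L_M$ is convex (a superlevel set of the concave $u_*$), $\pl L_M$ is a convex curve, and for $\mathcal H^1$-a.e.\ $\bar\rrr=(\bar x,\bar y,M)$ with $(\bar x,\bar y)\in\pl L_M$ one has $(\bar x,\bar y)\in\mathrm{int}\,\Om$, a tangent line to $\pl L_M$ at $(\bar x,\bar y)$, and $\bar\rrr$ a ridge point of $\pl C$ at which the one-sided limit $\bar p=\lim\nb u_*(x,y)$ as $(x,y)\to(\bar x,\bar y)$, $(x,y)\notin L_M$, exists --- here one uses that $\nb u_*\in BV_{\mathrm{loc}}$ with $\pl L_M$ inside its jump set ($\nb u_*=0$ on the $L_M$-side, $|\nb u_*|\ge 1$ on the other by \cite{BFK}). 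Put $g=|\bar p|\ge 1$ (again by \cite{BFK}); the conclusion of the theorem is exactly $g=1$ at each such ``good'' point, so assume $g>1$ and seek a contradiction. At $\bar\rrr$ the tangent cone is the dihedral angle $(\rrr-\bar\rrr,\eee_1)\le 0$, $(\rrr-\bar\rrr,\eee_2)\le 0$ with $\eee_1=\eee_3$, $\eee_2=(1+g^2)^{-1/2}(-\bar p,1)$; set $\theta_0=\measuredangle\langle\eee_1,\eee_2\rangle=\arctan g\in(\pi/4,\pi/2)$. For $\psi\in(0,\theta_0)$ let $\eee(\psi)\in\Gam_{\eee_1,\eee_2}$ be at angular distance $\psi$ from $\eee_1$; then $\eee(\psi)=\lam_1\eee_1+\lam_2\eee_2$ with $\lam_1=\sin(\theta_0-\psi)/\sin\theta_0$, $\lam_2=\sin\psi/\sin\theta_0$ (both positive), and along $\Gam_{\eee_1,\eee_2}$ (parametrised by the angle $\phi$ from $\eee_1$) one has $n\cdot\eee_3=\cos\phi$, so $\eee(\psi)\cdot\eee_3=\cos\psi$ and $\eee_2\cdot\eee_3=\cos\theta_0$. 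Cut $C$ by the plane $\Pi^t$ of normal $\eee(\psi)$ at distance $t$ from $\bar\rrr$, with cap $C_t$, base $B_t$, curved surface $S_t$ as in the first part of the paper. Since every element of $\Gam_{\eee_1,\eee_2}$ is a support normal to $C$ at $\bar\rrr$, for small $t$ the cap $C_t$ is a small neighbourhood of the (point or segment) $B_0=C\cap\{(\rrr-\bar\rrr,\eee(\psi))=0\}$ and $C'_t:=C\cap\{(\rrr-\bar\rrr,\eee(\psi))\le -t\}$ still projects onto $\Om$; hence $C'_t$ is the subgraph of an admissible function $u_{C'_t}:\Om\to[0,M]$. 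Comparing surface integrals,
\[
F(u_{C'_t})-F(u_*)=|B_t|\Big(\cos^3\psi-\int_{S^2}(n\cdot\eee_3)^3\,d\nu_t(n)\Big),\qquad \nu_t=\nu_{t,\eee(\psi),\bar\rrr,C},
\]
and minimality of $u_*$ makes the right-hand side $\ge 0$ for all small $t>0$.

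The last ingredient is $\lim_{t\to 0}\nu_t$. On $S_t$ the occurring normals are $\eee_3$ (on the part of the flat top $L_M\times\{M\}$ inside $C_t$) and normals with $n\cdot\eee_3\le 1/\sqrt2$ (by \cite{BFK}, there being no slope in $(0,1)$); as $C_t$ collapses onto $B_0\subset\{\bar\rrr\}\cup(\pl L_M\times\{M\})$ and $\nb u_*\to\bar p$ there, the latter normals converge to $\eee_2$. Together with \eqref{centt}, which reads $\int n\,d\nu_t(n)=\eee(\psi)$, this forces $\nu_t\xrightarrow[t\to 0]{}\lam_1\del_{\eee_1}+\lam_2\del_{\eee_2}$ (when $\pl L_M$ contains a segment through $(\bar x,\bar y)$ this is already Theorem~\ref{t2 segment}). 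Hence
\[
\lim_{t\to 0}\frac{F(u_{C'_t})-F(u_*)}{|B_t|}=\cos^3\psi-\lam_1-\lam_2\cos^3\theta_0=\frac{h(\psi)}{\sin\theta_0},\quad h(\psi):=\sin\theta_0\cos^3\psi-\sin(\theta_0-\psi)-\sin\psi\cos^3\theta_0.
\]
Since $h(\theta_0)=0$ and $h'(\theta_0)=\sin^2\theta_0(1-2\cos^2\theta_0)=-\sin^2\theta_0\cos 2\theta_0>0$ (as $\theta_0>\pi/4$), one gets $h(\psi)<0$ for $\psi$ slightly below $\theta_0$, contradicting the minimality inequality. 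Thus $g=1$ at every good point, which is the assertion of the theorem.

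I expect the computational heart --- the reduction, via the cutting-plane comparison and the two-atom limit of $\nu_t$, to the sign of $h'(\theta_0)$ --- to be short; the two-atom limit itself follows from the first part of the paper together with the rigidity $|\nb u_*|\notin(0,1)$. The real difficulty is the measure-theoretic step: that $\mathcal H^1$-a.e.\ point of $\pl L_M$ is a ridge point of $\pl C$ at which $\nb u_*$ has a \emph{genuine} one-sided limit, so that the steep normals on $S_t$ cluster at a single $\eee_2$. Handling the points where this limit might fail to exist --- a priori possible when $g>1$, through tangential approaches to $\pl L_M$ --- using the $BV$ structure of $\nb u_*$ and the constraint $|\nb u_*|\notin(0,1)$, is where the main work lies.
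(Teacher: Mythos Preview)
Your overall strategy—cut by a plane and compare resistances via the induced surface measure—is the same as the paper's, but the argument has a genuine gap exactly where you locate it, and the paper closes it in a way you did not anticipate.

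The problematic step is the two–atom limit $\nu_t\to\lam_1\del_{\eee_1}+\lam_2\del_{\eee_2}$. Your justification requires that the ``steep'' normals on $S_t$ cluster at a single direction $\eee_2$, i.e.\ that $\nabla u_*$ has a genuine one-sided limit at $(\bar x,\bar y)$. The $BV$ trace only gives an \emph{approximate} one-sided limit, which does not control all of $S_t$; indeed Examples~\ref{pr3}--\ref{pr4} show that for a general ridge point the limit of $\nu_t$ need not be two atoms. In our situation the slope constraint $|\nabla u_*|\ge 1$ yields only $\text{spt}\,\nu_*\subset\{\eee_1\}\cup\{\phi\in[\pi/4,\theta_0]\}$ on the arc, which is still a continuum. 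Your computation of $h(\psi)$ and $h'(\theta_0)$ is correct, but it tests only the two-atom measure; for other admissible $\nu_*$ the sign could go the other way for $\psi$ near $\theta_0$. (When $\pl L_M$ has a segment through $(\bar x,\bar y)$, Theorem~\ref{t2 segment} does give the two-atom limit, as you note; the gap is the strictly convex case.)

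The paper sidesteps this entirely. It defines the dihedral slope $k$ directly from the tangent cone (so no assumed limit of $\nabla u_*$), takes the \emph{fixed} cutting direction $\eee=\frac{1}{\sqrt2}(1,0,1)$ (your $\psi=\pi/4$), and uses only the weak information of Theorem~\ref{t1}: some partial limit $\nu_*$ exists, is supported on $\Gam_{\eee_1,\eee_2}$, contains both endpoints, and has centroid $\eee$. The contradiction then comes from an auxiliary 2D result (Proposition~\ref{utv meas}, the $M=1$ case of the 2D Newton problem): among \emph{all} measures on the quarter-circle with centroid $\frac{1}{\sqrt2}(1,1)$, the functional $\int (n_3)^3\,d\nu$ is uniquely minimised by $\del_{\frac{1}{\sqrt2}(1,1)}$ with value $1/(2\sqrt2)$. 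Since $\nu_*$ has $\eee_1$ and $\eee_2\ne\frac{1}{\sqrt2}(1,1)$ in its support (here $k>1$ is used), it is not the minimiser, hence $\int f\,d\nu_*>1/(2\sqrt2)$, and the cut body beats $u_*$. Showing $k=1$ suffices because every partial limit of $|\nabla u_*|$ from outside $L_M$ lies in $[1,k]$.

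So the fix is not to push harder on regularity of $\nabla u_*$, but to choose the cutting angle $\psi=\pi/4$ and replace your pointwise calculation by the convexity statement of Proposition~\ref{utv meas}, which handles every admissible $\nu_*$ at once.
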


The proof of Theorem \ref{t3} is based on Theorem \ref{t1}.

\begin{zam}
It was conjectured in 1995 in \cite{BFK} (Remark 6.3) that the slope of the optimal surface is 1 along the boundary $\pl L_M$. Our Theorem \ref{t3} gives the affirmative answer to this conjecture in the case when $L_M$ has nonempty interior. On the other hand, the numerical evidence seems to suggest that the conjecture is not true in the case when $L_M$ is a segment.
\end{zam}

It is well known that $L_M$ is not empty and is not a singleton; therefore it is either a line segment, or a planar convex set with nonempty interior. It is also true that if $\Om$ contains a circle of radius greater than $M$ (for instance, if $\Om$ is a unit circle and $M < 1$) then $L_M$ has nonempty interior. I could not find the proof of this fact in the literature, therefore I provide the proof below.

\begin{utv}\label{utv vnutr}
Let $M_0 = M_0(\Om)$ be the maximal radius of a circle that can be put inside $\Om$. Then for $M < M_0$ the set $L_M = \{ (x,y) : u_*(x,y) = M \}$ has nonempty interior. In particular, if $\Om$ is a unit circle and $M < 1$ then $L_M$ has nonempty interior. Thereby Theorem \ref{t3} is applicable for these values of $M$.
\end{utv}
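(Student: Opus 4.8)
\emph{Plan.} I would argue by contradiction. Suppose $M<M_0$ but $L_M$ has empty interior; since (as recalled above) $L_M$ is neither empty nor a single point, $L_M$ is then a non-degenerate segment $\sigma$. Because $u_*$ is concave it is differentiable a.e., and on the open full-measure set $\{u_*<M\}=\Om\setminus\sigma$ the cited result of \cite{BFK} gives $|\nb u_*|\ge1$ a.e. The goal is to push $u_*$ down to a contradiction with concavity.

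\emph{Step 1: showing that $\{u_*>0\}$ lies in a slab of width $2M$.} Pick $\rrr_0$ in the relative interior of $\sigma$ (which may be taken in $\mathrm{int}\,\Om$ unless $\sigma\subset\pl\Om$, a case handled at the end of the step), let $\nnn$ be a unit vector orthogonal to $\sigma$, and set $s(\rrr)=(\rrr-\rrr_0,\nnn)$, so $\sigma\subset\{s=0\}$ and $u_*(\rrr_0)=M=\max u_*$. Consider the convex body $C=\{(\rrr,z)\in\RRR^3:\rrr\in\Om,\ 0\le z\le u_*(\rrr)\}$ and the point $\rrr_0^*=(\rrr_0,M)\in\pl C$. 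Since $L_M$ has empty interior, near $\rrr_0^*$ the surface $\pl C$ is the graph of $u_*$, nowhere flat at height $M$, with an edge along $\{(\rrr,M):\rrr\in\sigma\}$ and descending to both sides: $\rrr_0^*$ is a ridge point. The two faces of its tangent dihedral angle give global supporting half-spaces $\{(\xi-\rrr_0^*,\eee_i)\le0\}$, $i=1,2$; the outer normals $\eee_i$ are orthogonal to $\sigma$, hence of the form $\eee_i=(\mu_i\nnn,1)/\sqrt{1+\mu_i^{\,2}}$ with $\mu_1<0<\mu_2$, and, being limits of graph normals $(-\nb u_*,1)/\sqrt{1+|\nb u_*|^2}$ at nearby regular points with $u_*<M$, they have vertical component $\le1/\sqrt2$, i.e. $|\mu_i|\ge1$. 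Evaluating the containment $C\subset\{(\xi-\rrr_0^*,\eee_i)\le0\}$ on the graph $\xi=(\rrr,u_*(\rrr))$ gives $u_*(\rrr)\le M-\mu_i\,s(\rrr)$ for all $\rrr\in\Om$; using $i=2$ where $s(\rrr)\ge0$ and $i=1$ where $s(\rrr)\le0$ yields $u_*(\rrr)\le M-|s(\rrr)|$, whence $\{u_*>0\}\subset\{|s|<M\}$. (If $\sigma\subset\pl\Om$ this bound is only stronger, $\Om$ lying on one side of $\sigma$.)

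\emph{Step 2: producing an interior zero of $u_*$.} Let $\bar D=\bar D_{M_0}(c)\subset\Om$ be an inscribed disc. As the slab $\{|s|<M\}$ has width $2M<2M_0$, one of the two circular segments into which the lines $\{s=\pm M\}$ cut $\bar D$ has height $\ge M_0-M>0$; by Step 1, $u_*\equiv0$ on this segment, and a circular segment of height $h>0$ of a disc of radius $M_0$ contains a disc $D'=\bar D_{\rho_0}(c')$ with $\rho_0\ge\tfrac12(M_0-M)>0$. Since $D'\subset\bar D$, the centre satisfies $c'\in\bar D_{M_0-\rho_0}(c)\subset\mathrm{int}\,\Om$. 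Thus $u_*$ equals its minimum value $0$ identically on a disc about an interior point $c'$. But a concave function constant on a disc about an interior point of its (convex) domain is constant everywhere: for $\rrr\in\Om$, $u_*\equiv0$ on the part of $[c',\rrr]$ inside $D'$, so the one-sided derivative of $u_*|_{[c',\rrr]}$ at the exit point is $\le0$, and then concavity and $u_*\ge0$ force $u_*\equiv0$ on the rest of $[c',\rrr]$. Hence $u_*\equiv0$ on $\Om$, contradicting $u_*\equiv M>0$ on $\sigma$. Therefore $L_M$ has nonempty interior; in particular this applies to a unit disc, where $M_0=1$.

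\emph{Main obstacle.} The heart of the matter is the inequality $|\mu_i|\ge1$ in Step 1: transferring the slope bound $|\nb u_*|\ge1$, which \cite{BFK} supplies only at regular points \emph{strictly below} the level $M$, to the tangent cone of $C$ at a singular point lying over $\{u_*=M\}$. The point requiring care is that each face-normal of the tangent dihedral really is a limit of graph-normals at regular points with $u_*<M$ — this is exactly where the hypothesis that $L_M$ has empty interior (so that $\pl C$ is nowhere flat at height $M$) enters — after which the remaining estimates are elementary convex geometry.
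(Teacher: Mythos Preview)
Your argument is correct and takes a genuinely different route from the paper's. The paper never assumes $L_M$ is a segment: it shows directly that every point of $\overline{\Om\setminus L_M}$ admits \emph{some} supporting plane of slope $\ge1$ (obtained as a limit of tangent planes at nearby regular points with $u_*<M$), observes that this plane meets $\{z=0\}$ along a line lying outside $\Om$, and concludes that each such point is within distance $M$ of $\pl\Om$; hence $L_M$ contains the shrunken inscribed disc $D_{M_0-M}$. Your approach is dual to this: instead of bounding the distance to $\pl\Om$, you assume $L_M=\sigma$ and bound the distance to the line through $\sigma$ via the tent inequality $u_*\le M-|s|$, then use the inscribed disc to force $u_*\equiv0$. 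The paper needs only \emph{one} steep supporting plane per point, whereas you need \emph{both} face normals of the tangent dihedral at $\rrr_0^*$ to satisfy $|\mu_i|\ge1$; this is the more delicate step you correctly flag. It is true, and your justification is essentially right: approaching $\rrr_0$ by regular points $(x_k,y_k)$ with $s(x_k,y_k)>0$, the subgradient inequality against points of $\sigma$ forces the $\sigma$-component of $\nb u_*(x_k,y_k)$ to vanish in the limit while the $\nnn$-component is squeezed to $-\mu_2$ (it is $\le-\mu_2$ by the same inequality, and every subsequential limit lies in the subdifferential $[\mu_1,\mu_2]\times\{0\}$ at $\rrr_0$), so the normals converge to $\eee_2$ and $\mu_2\ge1$ follows. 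This also shows \emph{a posteriori} that $\mu_1<0<\mu_2$, so $\rrr_0^*$ really is a ridge point. The paper's route is shorter and yields the quantitative inclusion $D_{M_0-M}\subset L_M$; yours trades that for a clean global bound on $u_*$, at the price of the extra concavity step and a slightly more delicate limit argument at the ridge.
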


\begin{proof}
Consider the convex body $C = C_{(u_*)} = \{ (x,y,z) \in \RRR^3 : (x,y) \in \Om,\ 0 \le z \le u_*(x,y) \}$. Note that for any point $(x,y)$ in $\overline{\Om \setminus L_M}$ there is a plane of support through $(x,y)$ with the slope greater than or equal to 1.

Indeed, take a sequence of regular points $(x_i,y_i)$ in $\Om \setminus L_M$ converging to $(x,y)$; the tangent planes at these point have the slope at least 1. The sequence of normals to these planes has a limiting point, say $v$. Then the plane through $(x,y)$ orthogonal to $v$ is a plane of support, and its slope is greater than or equal to 1.

Take a point $(x,y)$ on $\pl L_M$ and draw a plane of support with the slope $k \ge 1$ through this point. Consider the lines of intersection of this plane with the horizontal planes $z = M$ and $z = 0$. The projections of these planes on the $xy$-plane are again parallel lines, and the distance between them is $M/k \le M$ (see Fig.~\ref{figL}).

       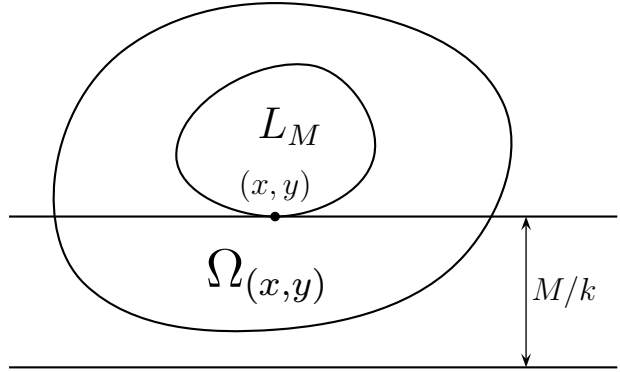
\begin{figure}[h]
\begin{picture}(0,150)
\scalebox{1}{
\rput(4,0.2){
\psline(0,0)(8,0)
\psline(0,2)(8,2)
\psecurve(0.6,2.5)(1,1.1)(3.5,0.5)(6,1.5)(6.5,3.5)(4,4.8)(1.5,4.2)(0.6,2.5)(1,1.1)(3.5,0.5)
\psecurve(2.2,2.8)(3.5,2)(4.8,2.8)(4,4)(2.2,2.8)(3.5,2)(4.8,2.8)
\psdots(3.5,2)
\psline[arrows=<->,arrowscale=1.5,linewidth=0.5pt](6.8,0)(6.8,2)
\rput(7.3,1){$M/k$}
\rput(3.5,2.4){$(x,y)$}
\rput(3.4,1.2){\scalebox{1.7}{$\Om_{(x,y)}$}}
\rput(3.7,3.2){\scalebox{1.4}{$L_M$}}
}}
\end{picture}
\caption{The auxiliary construction in Proposition \ref{utv vnutr}.}
\label{figL}
\end{figure} 

Denote by $\Om_{(x,y)}$ the part of $\Om$ between (or on) these lines. The latter line (corresponding to the intersection with the plane $z=0$) lies outside $\Om$, and therefore the distance between any point of $\Om_{(x,y)}$ and $\pl\Om$ does not exceed $M$.

Let $D_{M_0}$ be a ball with radius $M_0$ contained in $\Om$, and denote by $D_c$ the concentric open ball with the radius $c$. Let $M < M_0$; then $\Om_{(x,y)}$ does not intersect $D_{M_0-M}$. The set $L_M$ can be represented as $L_M = \overline{\Om \setminus \big( \cup_{(x,y)\in\pl L_M}  \Om_{(x,y)} \big) }$, and therefore, contains $D_{M_0-M}$.
\end{proof}

Theorems \ref{t2 segment}, \ref{t1}, \ref{t1b}, and \ref{t3} are proved in Sections \ref{sect t2}, \ref{sect t1}, \ref{sect t1b}, and \ref{sect t3}, respectively.

\section{Proof of Theorem \ref{t2 segment}}\label{sect t2}

First we are going to estimate the area of $B_t$.

Recall that the planes $\Pi_i :\, (\rrr - \rrr_0,\, \eee_i) = 0,\,  i = 1,\,2$ are the faces of the dihedral angle and $\Pi^t : \, (\rrr - \rrr_0,\, \eee) = -t$ is the crossing plane.

Let the length of the segment $B_0$ be $|B_0| = l > 0$. Denote by $M$ and $N$ its endpoints. Draw the planes through $M$ and $N$ orthogonal to $B_0$ and denote by $M_{i}^t$ and $N_{i}^t$, $i = 1,\,2$, the points of intersection of these planes with the lines $l_i^t = \Pi_i \cap \Pi^t$; see Fig.~\ref{fig4}\,(a).

        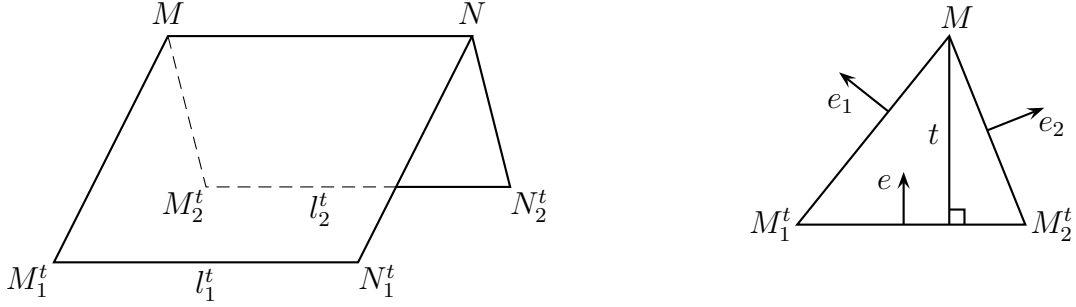
\begin{figure}[h]
\begin{picture}(0,105)
\scalebox{1}{
\rput(1,0.25){
\pspolygon(0,0)(4,0)(5.5,3)(1.5,3)
\psline(5.5,3)(6,1)(4.5,1)
\psline[linestyle=dashed,linewidth=0.4pt](1.5,3)(2,1)(4.5,1)
\rput(1.5,3.3){$M$}
\rput(5.5,3.3){$N$}
\rput(-0.35,-0.25){$M_1^t$}
\rput(4.25,-0.25){$N_1^t$}
  \rput(2,-0.3){$l_1^t$}
\rput(1.7,0.75){$M_2^t$}
\rput(6.25,0.75){$N_2^t$}
   \rput(3.5,0.7){$l_2^t$}
}}
\scalebox{1}{
\rput(10.5,0.75){
\pspolygon(0,0)(3,0)(2,2.5)
\psline[arrows=->,arrowscale=1.5,linewidth=0.8pt](1.4,0)(1.4,0.7)
\rput(1.15,0.56){$\eee$}
\psline[arrows=->,arrowscale=1.5,linewidth=0.8pt](1.2,1.5)(0.55,2.02)
\rput(0.57,1.6){$\eee_1$}
\psline[arrows=->,arrowscale=1.5,linewidth=0.8pt](2.5,1.25)(3.25,1.55)
\rput(3.35,1.3){$\eee_2$}
\rput(2.1,2.75){$M$}
\rput(3.35,0){$M_2^t$}
\rput(-0.35,0){$M_1^t$}
\psline(2,0)(2,2.5)
\psline(2.2,0)(2.2,0.2)(2,0.2)
\rput(1.8,1.2){$t$}
}}

\end{picture}
\caption{(a) $B_0$ is the line segment $MN$, and the planes $MNN_1^tM_1^t$ and $MNN_2^tM_2^t$ form the dihedral angle. (b) The triangle $M_1^tM_2^tM$ and the normal vectors to its sides.}
\label{fig4}
\end{figure} 

The vectors $\eee_1$ and $\eee_2$ are parallel to the plane of the triangle $M_1^t M_2^t M$. The sides of the triangle $M_1^t M_2^t$, $MM_1^t$, $MM_2^t$ are perpendicular to $\eee$, $\eee_1$, $\eee_2$, and their lengths are equal, respectively, to $|MM_1^t| = \lam_1 c_0 t$, $|MM_2^t| = \lam_2 c_0 t$, $|M_1^t M_2^t| = c_0 t$, where $c_0$ is a positive constants depending only on $\eee_1$, $\eee_2$, $\eee$
(namely, $c_0 = (\eee_1, \eee)/\sqrt{1 - (\eee_1, \eee)^2} + (\eee_2, \eee)/\sqrt{1 - (\eee_2, \eee)^2}$). See Fig.~\ref{fig4}\,(b).

The set $B_t$ lies in the plane $\Pi^t$ between the lines $l_1^t$ and $l_2^t$ (see Fig.~\ref{fig5}). We are going to prove that the length of the orthogonal projection of $B_t$ on $l_1^t$ is not greater than $l + o(1),$ $t \to 0$, and therefore,
\beq\label{Btl}
|B_t| \le l c_0 t(1 + o(1)), \ t \to 0.
\eeq

       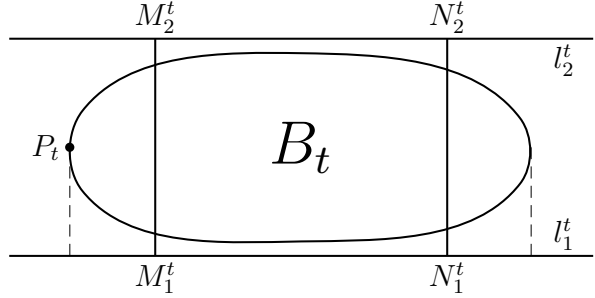
\begin{figure}[h]
\begin{picture}(0,95)
\scalebox{0.96}{
\rput(4,0.25){

\psline(0,0)(8,0)
\psline(0,3)(8,3)
\psecurve(1,2)(1,0.9)(4,0.2)(7,1)(7,1.9)(4,2.8)(1,2)(1,0.9)(4,0.2)
\psline(2,0)(2,3)
\psline(6,0)(6,3)
\rput(2,-0.3){$M_1^t$}
\rput(6,-0.3){$N_1^t$}
     \rput(7.6,0.3){$l_1^t$}
\rput(2,3.3){$M_2^t$}
\rput(6,3.3){$N_2^t$}
     \rput(7.6,2.7){$l_2^t$}
\psline[linestyle=dashed,linewidth=0.3pt](0.83,0)(0.83,1.5)
\psline[linestyle=dashed,linewidth=0.3pt](7.15,0)(7.15,1.5)
\psdots(0.83,1.5)
\rput(0.5,1.5){$P_t$}
 \rput(4,1.5){\scalebox{2}{$B_t$}}
}}

\end{picture}
\caption{The set $B_t$, the lines $l_1^t$ and $l_2^t$, and the point $P_t$.}
\label{fig5}
\end{figure} 

Assume the contrary; then there exists a sequence $(t_i)_{i \in \mathbb{N}}$ converging to 0 and a sequence of points $P_{t_i} \in B_{t_i}$ such that the distance between $P_{t_i}$ and the rectangle $M_1^{t_i} N_1^{t_i} N_2^{t_i} M_2^{t_i}$ is greater than a positive constant. The sequence $(P_{t_i})_{i \in \mathbb{N}} \subset C$ is bounded, and therefore has a limiting point $P$. This point belongs to $C$ and lies in the plane $\Pi^0 : \,(\rrr - \rrr_0,\, e) = 0$, hence $P \in B_0$. We come to the contradiction with the fact that the distance between $P$ and $B_0 = MN$ is greater than a positive constant.

Take a positive $\ve < l/2$ and denote by $M_\ve$ and $N_\ve$ the points of $B_0$ at the distance $\ve$ from $M$ and $N$, respectively. Let $\Pi^\perp_{M\ve}$ and $\Pi^\perp_{N\ve}$ be the planes through $M_\ve$ and $N_\ve$, respectively, orthogonal to $MN$. Denote by $M_{\ve 1} = M_{\ve 1}^t$, $N_{\ve 1} = N_{\ve 1}^t$, $M_{\ve 2} = M_{\ve 2}^t$, $N_{\ve 2} = N_{\ve 2}^t$ the points of intersection of these planes with the lines $l_1^t$ and $l_2^t$. Clearly, the area of the rectangle $M_{\ve 1} N_{\ve 1} N_{\ve 2} M_{\ve 2}$ is $| \square M_{\ve 1} N_{\ve 1} N_{\ve 2} M_{\ve 2}| = c_0 (l-\ve) t$.

Since both tangent cones at the points $M_\ve$ and $N_\ve$ coincide with the dihedral angle $(\rrr - \rrr_0,\, \eee_1) \le 0,\ (\rrr - \rrr_0,\, \eee_2) \le 0$ formed by the planes $\Pi_1$ and $\Pi_2$, we conclude that the tangent cone to the 2-dimensional convex body $C \cap \Pi^\perp_{M\ve}$ in the plane $\Pi^\perp_{M\ve}$ is the angle $M_{\ve 1} M_\ve M_{\ve 2}$, and the tangent cone to the body $C \cap \Pi^\perp_{N\ve}$ in the plane $\Pi^\perp_{N\ve}$ is the angle $N_{\ve 1} N_\ve N_{\ve 2}$ (see Fig.~\ref{fig6}).
           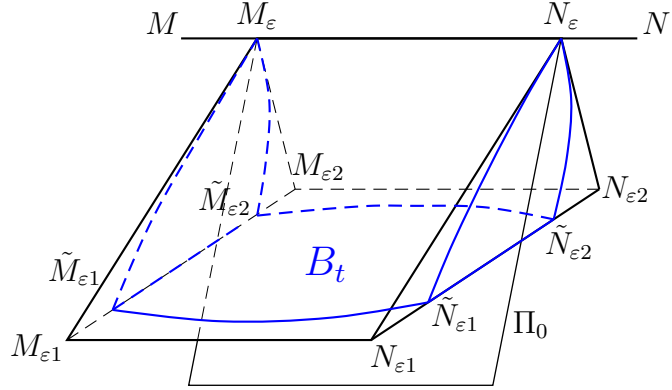
\begin{figure}[h]
\begin{picture}(0,155)
\scalebox{1}{
\rput(5,1.75){
\pspolygon(-1,-1)(3,-1)(5.5,3)(1.5,3)
     \psline[linewidth=0.5pt](0.717,-1)(0.6,-1.6)(4.6,-1.6)(5.5,3)
     \psline[linestyle=dashed,linewidth=0.25pt](0.717,-1)(1.5,3)
     \rput(5.1,-0.8){$\Pi_0$}
\psline(0.5,3)(6.5,3)
\psline(6,1)(5.5,3)
\psline(6,1)(3,-1)
\psline[linestyle=dashed,linewidth=0.4pt](1.5,3)(2,1)(6,1)
\psline[linestyle=dashed,linewidth=0.4pt](-1,-1)(2,1)
\pscurve[linecolor=blue,linestyle=dashed](1.5,0.65)(3,0.78)(3.5,0.78)(4.5,0.76)(5.4,0.6)
\pscurve[linecolor=blue](-0.4,-0.6)(1,-0.75)(2.5,-0.7)(3.75,-0.5)
\psline[linecolor=blue,linestyle=dashed](1.25,0.5)(-0.4,-0.6)
\psline[linecolor=blue](5.4,0.6)(3.75,-0.5)
   \rput(-0.9,-0.05){$\tilde M_{\ve 1}$}
   \rput(3.3,-1.25){$N_{\ve 1}$}
   \rput(2.3,1.3){$M_{\ve 2}$}
   \rput(6.34,1){$N_{\ve 2}$}
      \rput(0.25,3.2){$M$}
      \rput(6.75,3.2){$N$}
      \rput(2.4,0){\scalebox{1.3}{$\blue B_t$}}
           \pscurve[linecolor=blue,linestyle=dashed](-0.4,-0.6)(0.12,0.6)(0.8,1.8)(1.5,3)
           \pscurve[linecolor=blue,linestyle=dashed,](1.5,0.65)(1.61,1.33)(1.66,2.17)(1.5,3)
           \pscurve[linecolor=blue](3.75,-0.5)(4.08,0.3)(5.03,2.2)(5.5,3)
              \pscurve[linecolor=blue](5.4,0.6)(5.57,1.4)(5.63,2.2)(5.5,3)
\rput(1.5,3.3){$M_\ve$}
\rput(5.5,3.3){$N_\ve$}
\rput(-1.4,-1.1){$M_{\ve 1}$}
\rput(4.1,-0.65){$\tilde N_{\ve 1}$}
\rput(1.07,0.88){$\tilde M_{\ve 2}$}
\rput(5.6,0.3){$\tilde N_{\ve 2}$}
}}

\end{picture}
\caption{The part of $B_t$ contained in the rectangle $M_{\ve 1} N_{\ve 1} N_{\ve 2} M_{\ve 2}$ is bounded by the curves $\tilde M_{\ve 1} \tilde N_{\ve 1}$ and $\tilde M_{\ve 2} \tilde N_{\ve 2}$.}
\label{fig6}
\end{figure} 
 It follows that (for $t$ sufficiently small) the intersection of $C$ with the line $M_{\ve 1} M_{\ve 2}$ is a segment $\tilde M_{\ve 1} \tilde M_{\ve 2}$ contained in the segment $M_{\ve 1} M_{\ve 2}$ such that the distances $|M_{\ve 1} \tilde M_{\ve 1}|$ and $|M_{\ve 2} \tilde M_{\ve 2}|$ are $o(t)$. Similarly, for $t$ sufficiently small the intersection of $C$ with the line $N_{\ve 1} N_{\ve 2}$ is a segment $\tilde N_{\ve 1} \tilde N_{\ve 2}$ contained in the segment $N_{\ve 1} N_{\ve 2}$ and the distances $|N_{\ve 1} \tilde N_{\ve 1}|$ and $|N_{\ve 2} \tilde N_{\ve 2}|$ are $o(t)$.

The quadrangle $\tilde M_{\ve 1} \tilde N_{\ve 1} \tilde N_{\ve 2} \tilde M_{\ve 2}$ is contained in $B_t$, and its area is $|\square \tilde M_{\ve 1} \tilde N_{\ve 1} \tilde N_{\ve 2} \tilde M_{\ve 2}| = c_0 (l-\ve) t (1 + o(1))$, $t \to 0$. It follows that
\beq\label{Btg}
|B_t| \ge c_0 (l-\ve) t (1 + o(1)), \quad t \to 0.
\eeq
Since $\ve > 0$ is arbitrary, taking into account \eqref{Btl} and \eqref{Btg}, one obtains
\beq\label{Bt}
|B_t| = c_0 l t (1 + o(1)), \quad t \to 0.
\eeq

Now let us consider the surface $S_t$. The plane $\Pi_0$ of the equation $(\rrr- \rrr_0,\, \eee_1 - \eee_2) = 0$ (see Fig.~\ref{fig6}) is the bisector of the dihedral angle; it contains the segment $B_0$ and divides $S_t$ into two parts, $S_t = S_t^1 \cup S_t^2$, where $S_t^1$ lies between $\Pi_1$ and $\Pi_0$ and $S_t^2$ lies between $\Pi_2$ and $\Pi_0$.

We use the following property: if a convex body $C_1$ is contained in another convex body $C_2$ then the surface area of $C_1$ is smaller than or equal to the surface area of $C_2$; that is, $C_1 \subset C_2 \Rightarrow |\pl C_1| \le |\pl C_2|$.

Take again a positive $\ve < l/2$ and for any positive $t$ consider several prisms. Each of these prisms is bounded by the planes $\Pi_1$, $\Pi_2$, $\Pi^t$ and by two planes orthogonal to $B_0$. Let us call the {\it big prism} the smaller prism of this kind containing $C_t$. The big prism is divided into the central prism and two lateral prisms. The {\it central} prism is bounded by the orthogonal planes $\Pi^\perp_{M\ve}$ and $\Pi^\perp_{N\ve}$; of course it is contained in the big prism for $t$ small enough. The {\it lateral prisms} are obtained by taking off the central prism from the big prism. Each of them is bounded on one side by $\Pi^\perp_{M\ve}$ or $\Pi^\perp_{N\ve}$. The surface area of the central prism is asymptotically proportional to $t$; namely, it is equal to $c_0 (1 + \lam_1 + \lam_2) (l-\ve) t + O(t^2)$, where the term $O(t^2)$ is related to its lateral surface. The surface area of each of the lateral prisms is $c_0 (1 + \lam_1 + \lam_2) (\ve/2) t + o(t),\, t \to 0$.

The central prism is divided by the plane $\Pi_0$ into two parts, Prism$_{\ve,1}^t =$ Prism$_{\ve,1}$ and Prism$_{\ve,2}^t =$ Prism$_{\ve,2}$; that is, Prism$_{\ve,i}$, $i = 1,\,2$ is bounded by the planes $\Pi_{i},\, \Pi_{0},\, \Pi^t,\, \Pi^\perp_{M\ve},\, \Pi^\perp_{N\ve}$. Correspondingly, each of the surfaces $S_t^1$ and $S_t^2$ is divided into two parts, $S_t^i = S_t^{i,\ve} \cup S_t^{i,\text{lat}}$, where $S_t^{i,\ve}$ is contained in Prism$_{\ve,i}$ and $S_t^{i,\text{lat}}$ is contained in the union of lateral prisms. Note that since the surface $S_t^{i,\text{lat}}$ is contained in the union of the lateral prisms, its area does not exceed the sum of their areas $c_0 (1 + \lam_1 + \lam_2) \ve t + o(t),\, t \to 0$.

Correspondingly, we have the representation $\nu_t = \nu_{t,\ve}^1 + \nu_{t,\ve}^2 + \nu_{t,\ve}^{\text{lat}},$ and $\nu_{t,\ve}^{\text{lat}}(S^2) \le (1 + \lam_1 + \lam_2) \frac{\ve}{l} + o(1),\, t \to 0.$

Now estimate the area of $S_t^{1,\ve}$. To that end, compare the surfaces of the convex bodies Prism$_{\ve,1}$ and $C_t \cap$Prism$_{\ve,1}$. The surface measure of each of the bodies has the total integral equal to zero. The surface measure of Prism$_{\ve,1}$ is easy to calculate: it equals $c_0 t (l-\ve)(\lam_1 \del_{\eee_1} + \frac{\lam_1}{\lam_1 + \lam_2}\del_{-\eee} + \sqrt{\lam_1\lam_2} \sqrt{1 - (\lam_1 + \lam_2)^{-2}}\, \del_{\frac{\eee_2-\eee_1}{|\eee_2-\eee_1|}}) + O(t^2),\, t \to 0$. The surface measure of $C_t \cap$Prism$_{\ve,1}$ is the sum of several terms: the part of $\pl (C_t \cap$Prism$_{\ve,1})$ contained in $\Pi^t$ induces the measure $c_0 t (l-\ve) \frac{\lam_1}{\lam_1 + \lam_2}\del_{-\eee} + o(t)$; for $t$ sufficiently small the part of $\pl (C_t \cap$Prism$_{\ve,1})$ contained in $\Pi_0$ coincides with the part of $\pl $(Prism$_{\ve,1}$) contained in $\Pi_0$, and therefore induces the measure $c_0 t (l-\ve)\sqrt{\lam_1\lam_2} \sqrt{1 - (\lam_1 + \lam_2)^{-2}}\, \del_{\frac{\eee_2-\eee_1}{|\eee_2-\eee_1|}}$; the part contained in the planes $\Pi^\perp_{M\ve}$ and $\Pi^\perp_{N\ve}$ has the area $O(t^2)$; the resting part of $\pl (C_t \cap$Prism$_{\ve,1})$ is $S_t^{1,\ve}$ and is of our interest. Utilizing the integral equalities for the surface measures $\nu_{\text{Prism}_{\ve,1}}$ and $\nu_{C_t\cap\text{Prism}_{\ve,1}}$ of both convex bodies, one obtains
$$
\int_{S^2} n\, d\nu_{\text{Prism}_{\ve,1}}(n) = c_0 t (l-\ve) \Big( \lam_1 \eee_1 - \frac{\lam_1}{\lam_1 + \lam_2} \eee + \sqrt{\lam_1\lam_2} \sqrt{1 - \frac{1}{(\lam_1 + \lam_2)^{2}}}\ \frac{\eee_2-\eee_1}{|\eee_2-\eee_1|} \Big) + O(t^2) = \vec 0,
$$
$$
\int_{S^2} n\, d\nu_{C_t\cap\text{Prism}_{\ve,1}}(n) = -c_0 t (l-\ve) \frac{\lam_1}{\lam_1 + \lam_2} \eee + o(t) + c_0 t (l-\ve) \sqrt{\lam_1\lam_2} \sqrt{1 - \frac{1}{(\lam_1 + \lam_2)^{2}}}\ \frac{\eee_2-\eee_1}{|\eee_2-\eee_1|}
$$
$$
+ O(t^2) + \int_{S^2} n\, d\nu_{S_t^{1,\ve}}(n) = \vec 0.
$$
Equating these two expressions, one obtains
$$
\int_{S^2} n\, d\nu_{S_t^{1,\ve}}(n) = c_0 t (l-\ve)\lam_1\eee_1+ o(t) .
$$
Denote $c = (1 - \ve/l)\lam_1.$ After dividing both parts of this equation by $|B_t|$ one has
$$
\int_{S^2} \nnn\, d\nu_{t,\ve}^1(\nnn) = c \eee_1 + o(1), \quad t \to 0.
$$
Besides, comparing the surface areas of Prism$_{\ve,1}$ and $C_t \cap$Prism$_{\ve,1}$, one obtains $|S_{t,\ve}^1| = (l-\ve) c_0 \lam_1 t + o(t)$, therefore $\nu_{t,\ve}^1(S^2) = c + o(1),\, t \to 0.$

Let us show that for all $\del > 0$, $\nu_{t,\ve}^1(\{ \nnn \in S^2: (\nnn, \eee_1) > 1-\del \}) \to c$ and $\nu_{t,\ve}^1(\{ \nnn \in S^2: (\nnn, \eee_1) \le 1-\del \}) \to 0$ as $t \to 0$; it will follow that $\nu_{t,\ve}^1$ weakly converges to $c\del_{\eee_1}$.

We have
$$
\int_{S^2} (1 - (\nnn, \eee_1))\, d\nu_{t,\ve}^1(\nnn) = \nu_{t,\ve}^1(S^2) - \Big( \int_{S^2} \nnn\, d\nu_{t,\ve}^1(\nnn),\,  \eee_1 \Big)= o(1), \quad t \to 0.
$$
By Chebyshev's inequality,
$$
\nu_{t,\ve}^1(\{ \nnn \in S^2: (\nnn, \eee_1) \le 1-\del \}) \le \frac{1}{\del} \int_{S^2} (1 - (\nnn, \eee_1))\, d\nu_{t,\ve}^1(\nnn) \to 0, \ \text{as} \ t \to 0.
$$
Hence $\nu_{t,\ve}^1(\{ \nnn \in S^2: (\nnn, \eee_1) > 1-\del \}) = \nu_{t,\ve}^1(S^2) - \nu_{t,\ve}^1(\{ \nnn \in S^2: (\nnn, \eee_1) \le 1-\del \}) = c + o(1),\, t \to 0.$

Thus, it is proved that $\nu_{t,\ve}^1 \xrightarrow[t\to 0]{}(1 - \ve/l)\lam_1\del_{\eee_1}$. In a similar way one obtains that $\nu_{t,\ve}^2 \xrightarrow[t\to 0]{}(1 - \ve/l)\lam_2\del_{\eee_2}$.

Take a continuous function $f$ on $S^2$. We have
$$
\Big| \int_{S^2} f(n)\, d\nu_t(n) - (\lam_1 f(\eee_1) + \lam_2 f(\eee_2)) \Big| \le
$$
$$
\le \Big| \int_{S^2} f(n)\, d\nu_{t,\ve}^1(n) - \lam_1 f(\eee_1) \Big|
+ \Big| \int_{S^2} f(n)\, d\nu_{t,\ve}^2(n) - \lam_2 f(\eee_2) \Big| + \Big| \int_{S^2} f(n)\, d\nu_{t,\ve}^{\text{lat}}(n) \Big|.
$$
The first term in the right hand side of this inequality converges to $\frac{\ve}{l}\, \lam_1 |f(\eee_1)|$, the second term converges to $\frac{\ve}{l}\, \lam_2 |f(\eee_2)|$, and the upper limit of the third term is not greater than $\frac{\ve}{l}\, (1 + \lam_1 + \lam_2) \cdot \max f$. Since $\ve > 0$ is arbitrary, one concludes that the integral in the left hand side of the inequality converges to zero. Thus, $\nu_t$ weakly converges to $\lam_1 \del_{\eee_1} + \lam_2 \del_{\eee_2}$ as $t \to 0$. Theorem \ref{t2 segment} is proved.


\section{Proof of Theorem \ref{t1}}\label{sect t1}

If $B_0$ is a non-degenerate line segment then, by Theorem \ref{t2 segment}, the unique partial limit of $\nu_t$ is the measure $\lim_{t\to0} \nu_t = \nu_* = \lam_1 \del_{\eee_1} + \lam_2 \del_{\eee_2}$ supported on the two-point set $\{ \eee_1,\, \eee_2 \}$, and the theorem is proved. It remains to consider the case when $B_0$ is the singleton, $B_0 = \{ \rrr_0 \}$.

\begin{lemma}\label{l1}
$\lim\sup_{t\to0} \frac{|S_t|}{|B_t|} \le$ \rm const.
\end{lemma}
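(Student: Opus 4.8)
The key observation is that the quantity to be estimated is nothing but the total mass of $\nu_t$: by definition $\nu_t(S^2)=\frac1{|B_t|}\,|S_t\cap\pl'C|=\frac{|S_t|}{|B_t|}$, since $\pl'C$ has full measure in $\pl C$. So I want a uniform (in small $t$) bound on $\nu_t(S^2)$, and the natural tool is the barycenter identity \eqref{centt}, $\int_{S^2}\nnn\,d\nu_t(\nnn)=\eee$, where $|\eee|=1$. The plan is: find one fixed unit vector $\vec p$ and one fixed $\del_0>0$ such that $(\nnn,\vec p)\ge\del_0$ for $\nu_t$-almost every $\nnn$, for all sufficiently small $t$. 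Granting this, $\del_0\,\nu_t(S^2)\le\int_{S^2}(\nnn,\vec p)\,d\nu_t(\nnn)=\big(\int_{S^2}\nnn\,d\nu_t(\nnn),\vec p\big)=(\eee,\vec p)\le 1$, hence $\nu_t(S^2)\le 1/\del_0$, which gives $\limsup_{t\to0}|S_t|/|B_t|\le 1/\del_0=$ const.

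\paragraph{Choosing $\vec p$.}
Since $\eee=\lam_1\eee_1+\lam_2\eee_2$ with $\lam_1,\lam_2>0$ and $|\eee|=1$, the vector $\eee$ lies on $\Gam$, so $\Pi^0:(\rrr-\rrr_0,\eee)=0$ is a supporting plane of $C$ at $\rrr_0$ and the normal cone of $C$ at $\rrr_0$ is exactly the arc $\Gam$. As $\eee_1\ne\pm\eee_2$, the arc $\Gam$ has length $<\pi$, hence lies strictly inside an open hemisphere; concretely I would take $\vec p$ to be the unit vector in the direction $\eee_1+\eee_2$ (the midpoint of $\Gam$), for which $(\nnn,\vec p)\ge\sqrt{(1+(\eee_1,\eee_2))/2}=:\del_\Gam>0$ for every $\nnn\in\Gam$. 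By compactness of $\Gam$ and continuity of $\nnn\mapsto(\nnn,\vec p)$, there is $\ve>0$ such that $(\nnn,\vec p)\ge\del_0:=\del_\Gam/2$ on the spherical $\ve$-neighborhood $N_\ve(\Gam)$ of $\Gam$.

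\paragraph{Localizing the normals.}
It remains to show that for $t$ small every outward normal at a regular point of $S_t$ lies in $N_\ve(\Gam)$. This rests on two facts. First, $\mathrm{diam}\,C_t\to0$ as $t\to0$: the sets $C_t$ are compact and decrease, as $t\downarrow0$, to $\bigcap_{t>0}C_t=C\cap\Pi^0=B_0=\{\rrr_0\}$, and a nested family of compacta with singleton intersection has diameter tending to $0$; in particular $\sup_{\rrr\in S_t}|\rrr-\rrr_0|\to0$. Second, the normal cone of a convex body is upper semicontinuous in the base point, so for every $\ve>0$ there is $\del>0$ with $\nnn_\rrr\in N_\ve(\Gam)$ whenever $\rrr\in\pl'C$ and $|\rrr-\rrr_0|<\del$. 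Combining the two, there is $t_0>0$ such that for $0<t<t_0$ all normals on $S_t\cap\pl'C$ lie in $N_\ve(\Gam)$; then $\nu_t$ is concentrated on $N_\ve(\Gam)\subset\{(\nnn,\vec p)\ge\del_0\}$, and the integral estimate of the first paragraph applies.

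\paragraph{Main obstacle.}
The computation is short; the only delicate point is the localization step, and within it the assertion ``$\mathrm{diam}\,C_t\to0$'', which is precisely where the standing hypothesis of this case, $B_0=\{\rrr_0\}$, enters. If $B_0$ were a non-degenerate segment this step would fail (the normals on $S_t$ need not concentrate near $\Gam$ near the endpoints of $B_0$), but that case has already been disposed of by Theorem \ref{t2 segment}. I would therefore organize the proof as the four steps above: (i) rewrite the target as $\nu_t(S^2)$ and recall \eqref{centt}; (ii) extract $\vec p$ and $\del_\Gam$ from the geometry of $\Gam$; (iii) prove that the normals of $S_t$ cluster in $N_\ve(\Gam)$ for small $t$; (iv) integrate $(\nnn,\vec p)$ against $\nu_t$ to conclude $\nu_t(S^2)\le 1/\del_0$.
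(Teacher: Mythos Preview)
Your proof is correct and rests on the same pivot as the paper's: the unit vector $\vec p=(\eee_1+\eee_2)/|\eee_1+\eee_2|$ and the fact that for small $t$ every outward normal on $S_t$ makes a uniformly positive inner product with $\vec p$. The localization step you isolate (normals on $S_t$ cluster near $\Gam$ because $B_0=\{\rrr_0\}$ and the normal cone is upper semicontinuous) is exactly the step the paper uses, though the paper states it somewhat tersely there and revisits it more carefully in the next lemma.

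Where you differ is in how you extract the area bound from that localization. The paper argues geometrically: it shows that the oblique projection along $\eee_1+\eee_2$ restricted to $S_t$ is injective and has image exactly $B_t$, then compares $|S_t|$ with the area of the orthogonal projection of $S_t$ onto the plane $\{(\rrr,\eee_1+\eee_2)=0\}$. You bypass the bijection verification entirely by integrating $(\nnn,\vec p)$ against $\nu_t$ and invoking the barycenter identity \eqref{centt}, which already encodes the divergence theorem for $C_t$. This is a genuine simplification: your route is shorter, and it makes the constant transparent (you get $\nu_t(S^2)\le(\eee,\vec p)/\del_0$, matching the paper's final bound $\sqrt{2}/\sqrt{1+(\eee_1,\eee_2)}$ in the limit as $\del_0\uparrow\del_\Gam$). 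The paper's projection argument, on the other hand, gives a bit more geometric information (the explicit correspondence $S_t\leftrightarrow B_t$ along the direction $\eee_1+\eee_2$), though that extra information is not used elsewhere.
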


\begin{proof}
Consider the projection in the direction $\eee_1 + \eee_2$ on the plane $\Pi^t$. Let us show that for $t$ sufficiently small, the restriction of the projection on $S_t$ is injective and the image of $S_t$ is $B_t$.

Indeed, all elements of the arc $\Gam$ have the form $n = \mu_1 \eee_1 + \mu_2 \eee_2$, $\mu_1 \ge 0$, $\mu_2 \ge 0$, and therefore satisfy the inequality $(n,\, \eee_1 + \eee_2) > 0$. Hence there exists a neighborhood $\NNN$ of this arc such that all vectors $n \in \NNN$ satisfy the same inequality. It follows that for $t$ sufficiently small and for all regular vectors $\xi \in S_t$ it holds $(n_\xi,\, \eee_1 + \eee_2) > 0$, and therefore, the restriction of the projection on $S_t$ is injective.

Let us show that for $t$ sufficiently small, the image of $S_t$ belongs to $B_t$. Take the image $\rrr + \bar s (\eee_1 + \eee_2)$ of a point $\rrr \in S_t$. First note that $\bar s \le 0$. Indeed, one has $(\rrr + \bar s (\eee_1 + \eee_2) - \rrr_0,\, \eee) = -t$ and $(\rrr - \rrr_0,\, \eee) \ge -t$, hence $\bar s (\eee_1 + \eee_2,\, \eee) = -t - (\rrr - \rrr_0,\, \eee) \le 0,$ and therefore, $\bar s \le 0$. Assume that the image $\rrr + \bar s (\eee_1 + \eee_2)$ lies in $\Pi_t \setminus B_t$, and thereby, does not belong to $C$. It follows that the segment $\rrr + s(\eee_1 + \eee_2)$, $\bar s \le s \le 0$ intersects $\pl C$ at some interior point $\xi = \rrr + s(\eee_1 + \eee_2)$, and at this point $(n_\xi,\, \eee_1 + \eee_2) \le 0$. For $t$ sufficiently small this is impossible, and therefore, the image of $\rrr$ belongs to $B_t$.

Let us now show that $B_t$ is contained in the image of $S_t$.
Take a point $\rrr \in B_t$ and consider $\rrr_{(s)} = \rrr + s(\eee_1 + \eee_2)$ for $s \ge 0$. From the inequalities $(\rrr_{(s)} - \rrr_0,\, \eee_1) = (\rrr - \rrr_0,\, \eee_1) + s(1 + (\eee_1,\eee_2))$ and $(\rrr_{(s)} - \rrr_0,\, \eee_2) = (\rrr - \rrr_0,\, \eee_2) + s(1 + (\eee_1,\eee_2))$ it follows that for $s$ sufficiently large both these values are positive, and therefore, $\rrr_{(s)}$ does not belong to $C$. Hence for some $\bar s \ge 0$, $\rrr_{(\bar s)}$ lies on $\pl C$. We have $(\rrr_{(\bar s)} - \rrr_0,\, \eee) = -t + \bar s(\eee_1+\eee_2, \eee) \ge -t$, that is, $\rrr_{(\bar s)}$ lies on $S_t$. It follows that the image of $S_t$ is $B_t$.

The area of $B_t$ is greater or equal than the area of the (orthogonal) projection $\text{proj}_{\eee_1+\eee_1} (S_t)$ of $S_t$ on the plane $(\rrr,\, \eee_1 + \eee_2) = 0$, $|B_t| \ge |\text{proj}_{\eee_1+\eee_1} (S_t)|$. On the other hand,
$$
\frac{|S_t|}{|\text{proj}_{\eee_1+\eee_1} (S_t)|} \le \frac{1}{\inf_{\xi\in S_t} \big( \nnn_\xi,\ \frac{\eee_1+\eee_2}{|\eee_1+\eee_2|} \big)}.
$$
Since $S_t$ is compact, the value in the denominator in the right hand side of this inequality is positive.
Further, we have
$$
\lim_{t\to0} \inf_{\xi\in S_t} \Big( \nnn_\xi,\ \frac{\eee_1+\eee_2}{|\eee_1+\eee_2|} \Big)
= \inf_{\nnn \in \Gam_{\eee_1,\eee_2}} \Big( \nnn,\ \frac{\eee_1+\eee_2}{|\eee_1+\eee_2|} \Big)
= \Big( \eee_1,\ \frac{\eee_1+\eee_2}{|\eee_1+\eee_2|} \Big) = \frac{\sqrt{1 + (\eee_1,\eee_2)}}{\sqrt{2}} > 0.
$$
It follows that
$$
\lim\sup_{t\to0} \frac{|S_t|}{|B_t|} \le \lim\sup_{t\to0} \frac{|S_t|}{|\text{proj}_{\eee_1+\eee_1} (S_t)|} \le \frac{\sqrt{2}}{\sqrt{1 + (\eee_1,\eee_2)}}.
$$
\end{proof}

\begin{corollary}
It follows from Lemma \ref{l1} that $\nu_t(S^2) \le$ const for all $t$, and therefore, there exists at least one partial limit of $\nu_t$. If the partial limit $\nu_*$ is unique then $\nu_t$ converges to $\nu_*$.
\end{corollary}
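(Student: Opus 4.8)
The plan is to establish three things in turn: a uniform bound on the total masses $\nu_t(S^2)$, the resulting weak-$*$ compactness of the family, and the standard passage from ``unique partial limit'' to genuine convergence. For the first, taking $A = S^2$ in the definition of $\nu_t$ gives
$$
\nu_t(S^2) = \frac{1}{|B_t|}\,\big|\{ \rrr \in S_t \cap \pl' C : \nnn_\rrr \in S^2 \}\big| = \frac{|S_t \cap \pl' C|}{|B_t|} = \frac{|S_t|}{|B_t|},
$$
since $\pl' C$ is a full-measure subset of $\pl C$, hence of $S_t$. Lemma~\ref{l1} then says $\limsup_{t\to0} |S_t|/|B_t| \le \mathrm{const}$, so there is $t_0 > 0$ with $\nu_t(S^2)$ bounded on $(0, t_0)$; as $|S_t|$ is finite and $|B_t| > 0$ for each fixed $t$, this extends to a uniform bound $\nu_t(S^2) \le \Lam$ on any bounded range of $t$ (and only small $t$ is relevant for partial limits).

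With such a bound, the $\nu_t$ are nonnegative Borel measures on the compact metric space $S^2$ of uniformly bounded mass, so by weak-$*$ sequential compactness of bounded sets of measures (Banach--Alaoglu in $C(S^2)^*$; equivalently Helly's selection theorem or Prokhorov's theorem, tightness being automatic on a compact space) every sequence $t_i \to 0$ admits a subsequence along which $\nu_{t_i}$ converges weakly to some nonnegative measure $\nu_*$. Such a $\nu_*$ is by definition a partial weak limit, so the set of partial limits is nonempty.

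Finally, suppose the partial limit $\nu_*$ is unique but $\nu_t$ does not converge weakly to it. Then there are a continuous function $f$ on $S^2$, an $\ve > 0$, and a sequence $t_i \to 0$ with $\big|\int_{S^2} f\, d\nu_{t_i} - \int_{S^2} f\, d\nu_*\big| \ge \ve$ for all $i$. Applying the compactness just proved to $(t_i)$, pass to a subsequence along which $\nu_{t_i}$ converges weakly to some $\nu_{**}$; then $\nu_{**}$ is a partial limit, so $\nu_{**} = \nu_*$ by uniqueness, whence $\int_{S^2} f\, d\nu_{t_i} \to \int_{S^2} f\, d\nu_*$ along that subsequence --- contradicting the choice of $(t_i)$. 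Hence $\nu_t \xrightarrow[t\to0]{} \nu_*$.

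This argument is routine and I do not expect a substantive obstacle. The only points requiring a moment's care are the identity $\nu_t(S^2) = |S_t|/|B_t|$, which is precisely what makes Lemma~\ref{l1} the right input, and the upgrade of the $\limsup$ bound as $t\to0$ to an honest uniform bound on the range of $t$ under consideration.
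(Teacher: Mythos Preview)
Your argument is correct and is exactly the routine justification the paper leaves implicit: the corollary is stated in the paper without a separate proof, the reasoning being regarded as immediate from Lemma~\ref{l1} together with standard weak-$*$ compactness of bounded families of measures on the compact space $S^2$. Your identification $\nu_t(S^2)=|S_t|/|B_t|$, the compactness step, and the subsequence contradiction for the uniqueness-implies-convergence claim are precisely the expected details.
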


\begin{lemma}\label{l sup}
Each partial limit of $\nu_t$ as $t \to 0$ is supported in $\Gam.$
\end{lemma}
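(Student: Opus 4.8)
The plan is to prove that $\nu_*$ assigns zero mass to every open subset of $S^2$ whose closure is disjoint from $\Gam$; since $\Gam$ is closed and $S^2$ is second countable, this is the same as $\mathrm{spt}\,\nu_* \subset \Gam$. Equivalently, it suffices to show: for every continuous $f$ on $S^2$ with $\mathrm{supp}\,f \cap \Gam = \emptyset$ one has $\int_{S^2} f\,d\nu_t = 0$ for all sufficiently small $t$; then, for a partial limit $\nu_* = \lim_i \nu_{t_i}$, the paper's definition of weak convergence gives $\int_{S^2} f\,d\nu_* = \lim_i \int_{S^2} f\,d\nu_{t_i} = 0$, and running $f$ over a family of nonnegative bump functions covering $S^2 \setminus \Gam$ (each with support disjoint from the compact set $\Gam$) yields $\nu_*(S^2 \setminus \Gam) = 0$.

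So everything reduces to the claim that, for $t$ small, all outward normals $\nnn_\xi$ at regular points $\xi \in S_t$ lie in a prescribed neighborhood of $\Gam$. We are in the case $B_0 = \{\rrr_0\}$. I would first check that $S_t$ collapses to $\rrr_0$, i.e.\ $\sup_{\xi\in S_t}|\xi-\rrr_0|\to0$ as $t\to0$: if not, a convergent subsequence of points $\xi_i\in S_{t_i}$ with $t_i\to0$ has a limit $\xi_*\in\pl C$ with $\xi_*\ne\rrr_0$ and (passing to the limit in $(\xi_i-\rrr_0,\eee)\ge-t_i$) $(\xi_*-\rrr_0,\eee)\ge0$; but $(\rrr-\rrr_0,\eee)\le0$ throughout $C$ (because $\eee=\lam_1\eee_1+\lam_2\eee_2$ with $\lam_i>0$ and $C$ lies in its tangent cone at $\rrr_0$), so $\xi_*\in B_0=\{\rrr_0\}$, a contradiction. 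Next comes the upper semicontinuity of the normal map at $\rrr_0$: for $\xi\in\pl'C$ the plane through $\xi$ with normal $\nnn_\xi$ supports $C$, i.e.\ $(\nnn_\xi,\rrr-\xi)\le0$ for all $\rrr\in C$; hence, if $\xi_k\to\rrr_0$, any accumulation point $\nnn_*$ of the $\nnn_{\xi_k}$ satisfies $(\nnn_*,\rrr-\rrr_0)\le0$ on $C$, so $\nnn_*$ is an outward normal of a support plane at $\rrr_0$, that is $\nnn_*\in\Gam$. It follows that for every $\ve>0$ there is $\del>0$ such that $|\xi-\rrr_0|<\del$ and $\xi\in\pl'C$ imply $\mathrm{dist}(\nnn_\xi,\Gam)<\ve$.

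Combining these: given $f$ with $\mathrm{supp}\,f$ at distance $2\ve>0$ from $\Gam$, take the corresponding $\del$, then take $t_0$ so small that $S_t\subset\{|\xi-\rrr_0|<\del\}$ for all $t<t_0$. For such $t$, the set $\{\nnn_\xi:\xi\in S_t\cap\pl'C\}$ lies within the $\ve$-neighborhood of $\Gam$, hence off $\mathrm{supp}\,f$; since $\nu_t$ is supported on the closure of that set, $\int_{S^2}f\,d\nu_t=0$, which is exactly what was required.

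I do not foresee a genuine difficulty here: the lemma is, in essence, the upper semicontinuity of the normal cone map at the ridge point together with the collapse $S_t\to\{\rrr_0\}$, both standard convexity facts. The two things to be careful about are keeping to the case $B_0=\{\rrr_0\}$ (the segment case being already disposed of by Theorem~\ref{t2 segment}, whose limiting measure $\lam_1\del_{\eee_1}+\lam_2\del_{\eee_2}$ is supported on $\{\eee_1,\eee_2\}\subset\Gam$), and remembering that in this paper weak convergence is tested against continuous functions, so the passage to the limit in $\int f\,d\nu_{t_i}=0$ needs no extra justification.
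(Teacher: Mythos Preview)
Your proof is correct and follows essentially the same line as the paper's: both rest on (i) the collapse $S_t\to\{\rrr_0\}$ in the singleton case $B_0=\{\rrr_0\}$, and (ii) upper semicontinuity of the support-normal map at $\rrr_0$, which forces limits of normals to lie in $\Gam$. The only difference is packaging: the paper encodes (ii) by introducing the compact set $\OOO_K=\{\rrr\in\pl C:\text{some support normal at }\rrr\text{ lies in }K\}$ and noting $\rrr_0\notin\OOO_{S^2\setminus\NNN}$ for any open $\NNN\supset\Gam$, whereas you run the equivalent sequential $\ve$--$\del$ argument on regular points directly; the content is the same.
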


\begin{proof}
Notice that the intersection of the nested family of closed sets $S_t$ is the point $\rrr_0$, $\cap_{t>0} S_t = \{ \rrr_0 \}$, therefore for all $\ve > 0$ there exists $t_0$ such that for any $t < t_0$, $S_t$ is contained in the $\ve$-neighborhood of $\rrr_0$.

For a set of unit vectors $K \subset S^2$, denote by $\OOO_K$ the set of points $\rrr \in \pl C$ such that there exists a plane of support to $C$ at $\rrr$ with the outward normal contained in $K$. If $K$ is compact then $\OOO_K$ is also compact. Indeed, let $\rrr_i,\, i = 1,\, 2,\ldots$ be a sequence of points from $\OOO_K$ converging to a point $\rrr$. For all $i$ there exists a plane of support at $\rrr_i$ with the outward normal $v_i \in K$. Without loss of generality assume that $v_i$ converges to a vector $v$; otherwise just take a converging subsequence of vectors. Then we have $v \in K$, and the plane through $\rrr$ with the outward normal $v$ is a plane of support. We conclude that $\rrr$ is contained in $\OOO_K$, and therefore, $\OOO_K$ is closed, and thereby, is compact.

Take an open set $\NNN \subset S^2$ containing $\Gam$. The set $\OOO_{S^2 \setminus \NNN} \subset \pl C$ is closed and does not contain $\rrr_0$. Then there exists $t_0 > 0$ such that for all $t < t_0$,\, $S_t$ is contained in $\pl C \setminus \OOO_{S^2 \setminus \NNN}$. It follows that the outward normals to all planes of support at points of $S_t$ are contained in $\NNN$; therefore $\nu_t$ is supported in the closure of $\NNN$. Since the set $\NNN$ containing $\Gam$ is arbitrary, we conclude that all partial limits of $\nu_t$ as $t \to 0$ are supported in $\Gam.$
\end{proof}

It remains to prove the following statement.

\begin{lemma}\label{l3}
The support of any partial limit of $\nu_t$ contains the points $\eee_1$ and $\eee_2$.
\end{lemma}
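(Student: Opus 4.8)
The plan is the following. By the symmetry exchanging $\eee_1$ and $\eee_2$ it is enough to show that $\eee_1$ lies in the support of every partial limit of $\nu_t$, and for this it suffices to prove that $\liminf_{t\to0}\nu_t(U)>0$ for every open spherical cap $U$ centred at $\eee_1$. Indeed, if a partial limit $\nu_*$ of a sequence $\nu_{t_i}$ had $\eee_1\notin\mathrm{spt}\,\nu_*$, one picks an open cap $U\ni\eee_1$ and a smaller closed cap $\bar U$ with $\eee_1\in\mathrm{int}\,\bar U\subset\bar U\subset U$ and $\nu_*(\bar U)=0$; then $\limsup_i\nu_{t_i}(\mathrm{int}\,\bar U)\le\limsup_i\nu_{t_i}(\bar U)\le\nu_*(\bar U)=0$ by the portmanteau theorem, which contradicts $\liminf_t\nu_t(\mathrm{int}\,\bar U)>0$.

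Put $\rrr_0$ at the origin, let $\vec q$ be a unit vector along the edge $\Pi_1\cap\Pi_2$ of the dihedral angle (so $\vec q\perp\eee_1,\eee_2$), and write the tangent wedge as $W=W_0\times\RRR\vec q$, where $W_0=\{v\in\vec q^{\,\perp}:(v,\eee_1)\le0,\ (v,\eee_2)\le0\}$ is a plane angle. The first step is the geometric fact that $B_t$ fills the wedge up to its face $\Pi_1$: with $d_1(t):=\min\{\mathrm{dist}(\rrr,\Pi_1):\rrr\in B_t\}$ one has $d_1(t)=o(t)$ as $t\to0$, and likewise for $\Pi_2$. To see this, first note that the homothety $\rrr\mapsto(t/t')\rrr$ centred at $\rrr_0$ sends $B_{t'}$ into $B_t$ for $t<t'$ (by convexity, since $[\rrr_0,\rrr]\subset C_{t'}$ for $\rrr\in B_{t'}$), which gives $d_1(t)/t\le d_1(t')/t'$; hence $\lim_{t\to0}d_1(t)/t$ exists. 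On the other hand, since the tangent cone to $C$ at $\rrr_0$ is exactly $W$, for every $v$ in the relative interior of $W_0$, normalized by $(v,\eee)=-1$, there is $\lam(v)>0$ with $\rrr_0+\lam v\in C$ (hence $\in B_\lam$) for $0<\lam\le\lam(v)$; as $\mathrm{dist}(\rrr_0+\lam v,\Pi_1)=\lam|(v,\eee_1)|$, this yields $d_1(\lam)/\lam\le|(v,\eee_1)|$ for small $\lam$. Letting $v$ approach the side of $W_0$ orthogonal to $\eee_1$, on which $(v,\eee_1)=0$, we obtain $\lim_{t\to0}d_1(t)/t=0$.

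The second, and main, step is to convert this into the area estimate $|\{\rrr\in S_t\cap\pl'C:\nnn_\rrr\in U\}|\ge c(U)\,|B_t|$ for all small $t$, with $c(U)>0$; together with $\int n\,d\nu_t=\eee$ this gives $\liminf_t\nu_t(U)>0$. One natural route is a blow-up argument. Suppose the estimate fails along some $t_i\to0$. Apply to $C_{t_i}$ an affine map $A_i$ that preserves $W$, maps $\Pi^0$ to $\Pi^0$ and $\Pi^{t_i}$ to $\Pi^1$ (this forces the scaling factor $1/t_i$ on $\vec q^{\,\perp}$), and rescales and translates along $\vec q$ so that $A_i(C_{t_i})$ has unit-order extent in the $\vec q$ direction. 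By Blaschke's selection theorem, along a subsequence $A_i(C_{t_i})\to C_\infty$, a convex body contained in $W$, lying between $\Pi^0$ and $\Pi^1$, with flat face $B_\infty=C_\infty\cap\Pi^1$ touching the face $\Pi_1$ of $W$ by the first step. Moreover $A_i(\rrr_0)$ stays on the edge of $W$ and, up to a further subsequence, converges to a point $p_\infty$ on it; since $C_{t_i}$ fills $W$ near $\rrr_0$ at a rate that survives the (direction-almost-preserving) rescaling, $C_\infty$ contains a plane wedge congruent to $W_0$ based at $p_\infty$, so $\pl C_\infty$ carries a genuine piece of the face $\Pi_1$ of $W$, i.e. a surface-area atom $\kappa\,\del_{\eee_1}$ with $\kappa>0$. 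Tracking how the anisotropic map $A_i$ transforms surface area, and using continuity of the surface-area measure under Hausdorff convergence, one sees that $\nu_{t_i}$ converges to a measure charging every neighbourhood of $\eee_1$ by a fixed positive amount, contradicting the assumption. By symmetry the same holds at $\eee_2$, proving the lemma. (An alternative to the blow-up would be to slice $C_t$ by the planes $\{(\rrr,\vec q)=\mathrm{const}\}$ and integrate a localized version of Remark~\ref{rem0} over the slices.)

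The step I expect to be the main obstacle is the second one: passing from ``$B_t$ reaches the face $\Pi_1$'' to the quantitative area bound. The difficulty is the $\vec q$-direction — the slices $C_t\cap\{(\rrr,\vec q)=\mathrm{const}\}$ need not all reach the face (they can be pinched near the ends of their range), so a naive slicing reduction to the $2$-dimensional picture loses too much; and in the blow-up the rescaling is anisotropic, so both the convergence of $\nu_{t_i}$ to the surface measure of the limit and the claim that the limit $C_\infty$ really contains a full plane wedge at $p_\infty$ (rather than a lower-dimensional remnant) require care. The degenerate case when $C\cap\vec q^{\,\perp}$ is not $2$-dimensional should be handled by an analogous argument applied to a family of nearby parallel sections.
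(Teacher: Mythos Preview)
Your reduction to ``$\liminf_{t\to0}\nu_t(U)>0$ for every cap $U\ni\eee_1$'' and your first step are both correct; the monotonicity of $d_1(t)/t$ together with the tangent-cone description gives $d_1(t)=o(t)$ cleanly.

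The gap is exactly where you locate it, and it is a real one rather than merely a matter of care. Two concrete obstructions:

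\begin{itemize}
\item[(a)] The implication ``$C_\infty\supset p_\infty+W_0\ \Rightarrow\ \nu_{C_\infty}(\{\eee_1\})>0$'' fails. The set $(p_\infty+W_0)\cap\Pi_1$ is only a segment, lying at the single $\vec q$--level of $p_\infty$. Nothing you have written forces $C_\infty\cap\Pi_1$ to be two--dimensional: the convex hull of $W_0$ at one $\vec q$--level with a ball away from $\Pi_1$ at another level already gives a body in $W$ whose $\Pi_1$--face is that same segment. Step~1 only tells you $B_\infty$ meets $\Pi_1$ at a point of $l_1^1$, and that point may well coincide with the endpoint of the segment.
\item[(b)] Even granting a face, the transfer from $\nu_{C_\infty}$ back to $\nu_{t_i}$ is not available. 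Under $A_i$ a unit normal $n=n_\perp+\epsilon\vec q$ is sent to a vector proportional to $t_i n_\perp+(\epsilon/\alpha_i)\vec q$, so the $\vec q$--component is amplified by $(t_i\alpha_i)^{-1}$. In the paper's own Examples~\ref{pr3}--\ref{pr4} one has $\alpha_i\asymp t_i^{-1/2}$, whence $t_i\alpha_i\to0$: the image of a fixed cap $U$ around $\eee_1$ degenerates, and Hausdorff continuity of surface measures applies to $\nu_{A_i(C_{t_i})}$, not to the quantity $\nu_{t_i}(U)$ you need. Since $C_{t_i}\cap\Pi_1=\{\rrr_0\}$, there is no flat part with normal exactly $\eee_1$ to fall back on.
\end{itemize}

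For contrast, the paper avoids rescaling entirely. It cuts out a sub-body of $C_t$ by the plane $\Pi_{[t]}$ through the edge and a point of $\partial B_t$, together with a plane $\Pi_\theta^\perp$ perpendicular to the edge at a fraction $\theta$ of the $\vec q$--extent $d(t)$ of $B_t$, and then applies the Minkowski identity $\int_{S^2}n\,d\nu=0$ to that sub-body. All faces except the piece $S_{t,\theta}^1\subset S_t$ are explicit, so one reads off
\[
\frac{1}{|B_t|}\int_{S^2}n\,d\nu_{S_{t,\theta}^1}(n)\ \in\ \{\lambda(\eee_1+O(\theta)w):\ \lambda\ge c>0,\ |w|\le1\}
\]
with $c$ independent of $t$. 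Any partial limit of $\frac{1}{|B_t|}\nu_{S_{t,\theta}^1}$ is supported on $\Gamma$ with centre of mass in this cone, hence its support meets the cone; letting $\theta\to0$ forces $\eee_1\in\mathrm{spt}\,\nu_*$. The key idea you are missing is this direct use of the vector-area identity on a sub-body chosen so that all its \emph{other} faces are computable---it plays the localising role you hoped the blow-up would play, without ever touching the Gauss map.
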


\begin{proof}
It suffices to prove the lemma for the point $\eee_1$.

Recall that $\Pi_1$ and $\Pi_2$ are the planes of equations $(\rrr - \rrr_0,\, \eee_1) = 0$ and $(\rrr - \rrr_0,\, \eee_2) = 0$ forming the dihedral angle, $\Pi^{t}$ is the plane of equation $(\rrr - \rrr_0,\, \eee) = -t$, and $l_1 = l_1^t$ and $l_2 = l_2^t$ are the parallel lines resulting from intersection of $\Pi_1$ and $\Pi_2$, correspondingly, with $\Pi^t$.

Let $\Pi^\perp$ be the plane through $\rrr_0$ perpendicular to the edge of the dihedral angle, and let this plane intersects the lines $l_1$ and $l_2$ at the points $\hat N_t$ and $\hat M_t$, respectively. We have $|\hat M_t \hat N_t| = c_0 t$, where $c_0$ is the positive constant defined above.

For $t$ sufficiently small, the intersection of $\Pi^\perp$ with $B_t$ is a line segment, let its endpoints be $M = M_t$ and $N = N_t$. This segment is contained in $\hat M_t \hat N_t$, and $|M_t N_t|/|\hat M_t \hat N_t| \to 1$ as $t \to 0.$

The domain $B_t$ is contained between the lines $l_1$ and $l_2$. Let the orthogonal projection of $B_t$ on the line $l_1$ be the segment $\hat C_t \hat D_t.$ For $t$ sufficiently small this segment contains $\hat N_t$, and $|\hat C_t \hat N_t|/t \to \infty$, $|\hat D_t \hat N_t|/t \to \infty$ as $t \to 0.$ Without loss of generality assume that $|\hat D_t \hat N_t| \ge |\hat C_t \hat N_t|$. Denote $|\hat D_t \hat N_t| = d(t)$; thus, $\lim_{t\to0} d(t)/t = \infty.$

One easily sees that
\beq\label{estimateB}
|B_t| \le 2c_0 t d(t).
\eeq

Fix a value $0 < \theta < 1$ and take the point $\hat F_t$ on $l_1$ between the points $\hat N_t$ and $\hat D_t$ so as $|\hat N_t \hat F_t|/|\hat N_t \hat D_t| = \theta.$ Thus, $|\hat N_t \hat F_t| = \theta d(t)$ and $|\hat F_t \hat D_t| = (1-\theta) d(t)$.

Let $D_t$ be the point on $\pl B_t$ that projects to $\hat D_t$; if such point is not unique (these points form a line segment) then choose any of such points, for instance the one closest to $l_1$. Denote by $\Pi_{[t]}$ the plane through $D_t$ and the edge of the dihedral angle, and denote by $H_t$ the point of intersection of this plane with $M_t N_t$.   

Denote $|H_t \hat N_t| := h(t) \le c_0 t$. It may happen that the segment $H_t \hat N_t$ degenerates to a point; in this case $h(t) = 0$. One has $|H_t N_t| = h(t) + o(t)$ as $t \to 0$.

The area of the rectangle $H_t \hat N_t \hat F_t E_t$ equals
$$
|H_t \hat N_t \hat F_t E_t| = \theta d(t) h(t).
$$

Denote by $B_t^{(1)}(\theta)$ the part of $B_t$ bounded by the broken line $N_t H_t E_t F_t$ and by the arc $N_t F_t$ of the boundary $\pl B_t.$ Due to convexity of the curve $N_t F_t$, one easily concludes that $|E_t F_t| \ge (1-\theta) |H_t N_t|$, and therefore
$$
 \theta d(t) \big( (1 - \theta/2)h(t) + o(t) \big) \le |B_t^{(1)}(\theta)| \le  \theta d(t) h(t).
$$
It follows that for some $\theta_1 = \theta_1(t) \in [0,\, \theta]$,
\beq\label{estimate1}
|B_t^{(1)}(\theta)| =  \theta d(t) \big( (1 - \theta_1/2)h(t) + o(t) \big).
\eeq
See Figures~\ref{figs1} and \ref{figs2}.  

       \begin{figure}[h]
\begin{picture}(0,150)
\scalebox{1}{
\rput(0.8,1){
\psline(-0.5,0)(14.5,0)
\psline(-0.5,4)(14.5,4)
\pspolygon[linewidth=0pt,linecolor=white,fillstyle=solid,fillcolor=lightgray](6,0.12)(9.5,0.265)(9.5,1.75)(6,1.75)
\psecurve(-0.05,1)(0,2.35)(0.5,3)(6,3.9)(12,3.25)(13.8,1.75)(14.3,0.1)
\psecurve(14.6,4)(13.8,1.75)(13.5,1.29)(8,0.2)(0.8,1.1)(0,2.35)(0.3,3.9)
\psline(6,0)(6,4)
\rput(5.9,-0.3){$\hat N_t$}
     \rput(5.73,0.38){$N_t$}
\rput(6,4.3){$\hat M_t$}
     \rput(5.7,3.6){$M_t$}
\rput(14.1,1.75){$D_t$}
\rput(13.8,-0.3){$\hat D_t$}
\psline[linestyle=dashed,linewidth=0.3pt](13.8,1.75)(13.8,0)
\rput(0,-0.3){$\hat C_t$}
\psline[linestyle=dashed,linewidth=0.3pt](0,2.35)(0,0)
\rput(-0.25,2.5){$C_t$}
\psline(6,1.75)(13.8,1.75)
\rput(5.7,1.8){$H_t$}
\rput(2,-0.3){$l_1^t$}
\rput(2,4.3){$l_2^t$}
\psline(9.5,0)(9.5,1.75)
\rput(9.5,-0.3){$\hat F_t$}
\rput(9.6,2){$E_t$}
\rput(9.75,0.56){$F_t$}
\rput(7.85,1){\scalebox{1.5}{$B_t^{(1)}(\theta)$}}
\rput(7.85,-0.3){$\theta d(t)$}
\rput(11.85,-0.3){$(1-\theta) d(t)$}
\rput(5.55,1){$h(t)$}
}}
\end{picture}
\caption{The domain $B_t$ between the lines $l_1^t$ and $l_2^t$ and the corresponding notation.}
\label{figs1}
\end{figure}
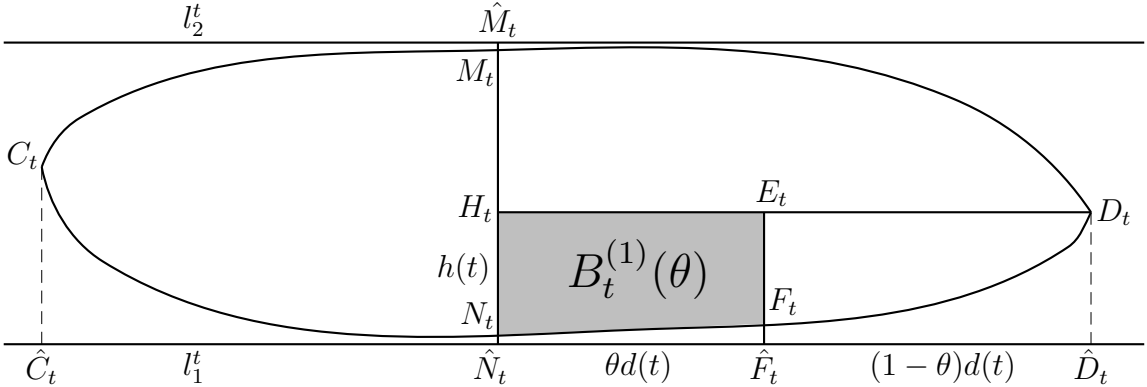

\begin{figure}[h]
\begin{picture}(0,180)
\scalebox{1}{
\rput(0.8,0.5){
\psline(2,0)(1.5,5)
\psline[linestyle=dashed,linewidth=0.3pt](0,2)(12,2)(14,0)
\psline[linestyle=dashed,linewidth=0.3pt](12,2)(13.5,5)
\pspolygon(2,0)(0,2)(1.5,5)(13.5,5)(14,0)
\psline[linestyle=dashed,linewidth=0.3pt](7,0)(5,2)(6.5,5)
\psline[linewidth=0.8pt](7,0)(6.5,5)
\psline[linestyle=dashed,linewidth=0.3pt](6,1)(13,1)
\pscurve[linecolor=brown](1,1)(2.5,3.25)(4.5,4.7)(6.5,5)(8.5,4.83)(10.4,4.3)(11,3.92)(12,3)(13,1)
\psecurve[linestyle=dashed,linecolor=brown,linewidth=0.8pt]
(5,0.3)(7,0.23)(9,0.3)(11,0.35)(12,0.44)(13,0.95)(12.95,1)(12.45,1.35)(11,1.72)(9,1.9)
(5,1.9)(3,1.8)(1.54,1.5)(1.06,0.95)(1.54,0.6)(3,0.4)(5,0.3)(7,0.23)(9,0.3)
\psline[linestyle=dashed,linewidth=0.3pt](11,0)(9,2)(10.5,5)
          \psline[arrows=->,arrowscale=2,linewidth=0.8pt](10.17,2.3)(11.07,1.4) 
          \rput(11.3,1.4){$\vvv$}
\psline[linewidth=0.8pt](11,0)(10.5,5)
\psline[linestyle=dashed,linewidth=0.3pt](6,1)(6.5,5)
\psline[linestyle=dashed,linewidth=0.3pt](10,1)(10.5,5)
\psline[linestyle=dashed,linewidth=0.3pt](13,1)(13.5,5)
\psdots[dotsize=2pt](6.77,0.23)(10.4,4.3)(10.66,0.34)
\psecurve[linecolor=brown,linewidth=0.8pt](6.19,7.5)(6.5,5)(6.69,2.5)(6.77,0.23)(6.9,-2.5)
\pscurve[linecolor=brown,linewidth=0.8pt](10.4,4.3)(10.6,2.3)(10.67,1.3)(10.68,0.82)(10.66,0.34)
\pscurve[linecolor=brown,linewidth=0.8pt](6.77,0.23)(8.71,0.28)(10.66,0.34)
\rput(6.5,5.27){$\rrr_0$}
     \rput(6.24,3.4){$\Pi^\perp$}
\rput(7.1,-0.33){$\hat N_t$}
\rput(11.1,-0.33){$\hat F_t$}
\rput(9.85,0.75){$E_t$}
\rput(5.65,0.95){$H_t$}
\rput(4.7,2.3){$\hat M_t$}
      \rput(4.95,1.64){$M_t$}
\rput(10.5,5.33){$\rrr_{\theta,t}$}
\rput(10.02,4.12){\scalebox{0.9}{$q_\theta(t)$}}
    \rput(9.77,2.7){\scalebox{0.9}{$\Pi_\theta^\perp(t)$}}
\rput(6.35,0.25){$N_t$}
\rput(10.45,0.58){\scalebox{1}{$F_t$}}
}}
\end{picture}
\caption{The convex body $C_t$ in the dihedral angle and the corresponding notation.}
\label{figs2}
\end{figure}
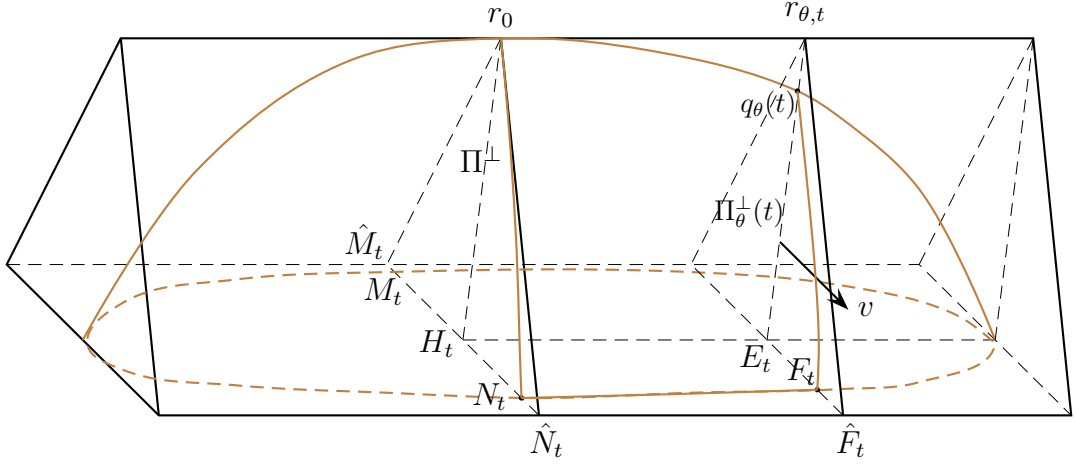 

Draw the plane $\Pi^\perp_\theta(t)$ through $E_t$ perpendicular to the edge of the dihedral angle. Let $\rrr_{\theta,t}$ be the point of intersection of the plane with the edge, and let the segment $E_t \rrr_{\theta,t}$ intersect $\pl C$ at the point $q_{\theta,t}$.

Denote by $\vvv = \vvv(t)$ the normal vector to $\Pi_{[t]}$ directed toward $l_1.$

Denote by $C_t(\theta)^{(1)}$ the part of $C$ bounded by the planes $\Pi_1,\, \Pi^t,\, \Pi_{[t]},\, \Pi^\perp,$ and $\Pi^\perp_\theta(t)$, and compare it with the prism with the bases $H_t \hat N_t \rrr_0$ and $E_t \hat F_t \rrr_{\theta,t}$. Actually, our aim is to compare the part of $\pl C$ bounded by the curve $N_t F_t q_{\theta,t} \rrr_0$ (which belongs to the boundary of the body $C_t(\theta)^{(1)}$) and the rectangle $\rrr_0 \rrr_{\theta,t} \hat F_t \hat N_t$ (which belongs to the boundary of the prism).

Let $|\rrr_0 H_t| = c_1 t$; then $| \square \rrr_0 H_t E_t \rrr_{\theta,t}| = \theta d(t) c_1 t$. The area of the curvilinear quadrangle $\rrr_0 H_t E_t q_{\theta,t}$ obeys the inequalities
$$
\theta d(t) (1 - \theta/2) c_1 t  \le |\square \rrr_0 H_t E_t q_{\theta,t}| \le |\square \rrr_0 H_t E_t \rrr_{\theta,t}| = \theta d(t) c_1 t.
$$
It follows that for some $\theta_2 = \theta_2(t) \in [0,\, \theta]$,
\beq\label{estimate2}
|\square \rrr_0 H_t E_t q_{\theta,t}| = \theta d(t) (1 - \theta_2/2) c_1 t.
\eeq

Further, one has
$$
|\square\, \rrr_0 \rrr_{\theta,t} \hat F_t \hat N_t|\, \eee_1 = \theta d(t) c_1 t\, \vvv + \theta d(t) h(t)\, \eee,
$$
hence
$$
e_1 = \frac{\vvv + \frac{h(t)}{c_1 t}\, \eee}{|\vvv + \frac{h(t)}{c_1 t}\, \eee|}.
$$

Denote by $S_{t,\theta}^{1}$ the part of the surface $\pl C$ bounded by $N_t F_t q_{\theta,t} \rrr_0$ and consider the corresponding induced measure $\nu_{S_{t,\theta}^{1}}$. Note that the outward normals to the body $C_t(\theta)^{(1)}$ at the points of its boundary contained in the planes $\Pi^\perp$ and $\Pi_\theta^\perp(t)$ are, respectively, $\frac{\eee_1 \times \eee_2}{|\eee_1 \times \eee_2|}$ and $-\frac{\eee_1 \times \eee_2}{|\eee_1 \times \eee_2|}$. Then the integral equality $\int_{S^2} n\, d\nu_{C_t(\theta)^{(1)}}(n) = \vec 0$ for the surface measure of the body $C_t(\theta)^{(1)}$ can be rewritten as
$$
\int_{S^2} n\, d\nu_{S_{t,\theta}^{1}}(n) = |B_t^{(1)}(\theta)|\, e + |\square \rrr_0 H_t E_t q_{\theta,t}|\, \vvv
+ |\rrr_0 H_t N_t|\, \frac{\eee_1 \times \eee_2}{|\eee_1 \times \eee_2|} - |q_{\theta,t} E_t F_t|\, \frac{\eee_1 \times \eee_2}{|\eee_1 \times \eee_2|}.
$$
Using \eqref{estimate1} and \eqref{estimate2} and taking into account that $|\rrr_0 H_t N_t| = O(t^2)$, $|q_{\theta,t} E_t F_t| = O(t^2)$,  and $\frac{O(t^2)}{td(t)} = o(1)$ as $t \to 0$, one gets
$$
\frac{1}{|B_t|} \int_{S^2} n\, d\nu_{S_{t,\theta}^{1}}(n) = \frac{\theta\, d(t)\, c_1 t}{|B_t|} \Big[ \big( (1 - {\theta_1}/{2})   \frac{h(t)}{c_1 t} + o(1) \big)\, \eee
+ \Big( 1 - \frac{\theta_2}{2} \Big)\, \vvv + o(1) \Big]
$$
\beq\label{eqTheta}
= \frac{\theta\, d(t)\, c_1 t}{|B_t|} \left( \Big[\vvv + \frac{h(t)}{c_1 t}\, \eee \Big] - \Big[\frac{\theta_2}{2}\, \vvv + \frac{\theta_1}{2}\, \frac{h(t)}{c_1 t}\, \eee \Big] + o(1) \right).
\eeq

The angle between $e_1$ and $\int_{S^2} n\, d\nu_t^\theta(n)$ is equal to the angle between $\vvv + \frac{h(t)}{c_1 t}\, \eee$ and $[\vvv + \frac{h(t)}{c_1 t}\, \eee] - [\frac{\theta_2}{2}\, \vvv + \frac{\theta_1}{2}\, \frac{h(t)}{c_1 t}\, \eee] + o(1)$. The angle between the unit vectors $\vvv$ and $\eee$ is always greater than the minimum of the angles $\measuredangle(\eee,-\eee_1)$ and $\measuredangle(\eee,\eee_2)$, therefore the norm of the vector $\vvv + \frac{h(t)}{c_1 t}\, \eee$ is greater than a certain positive value $C_1$ depending only on $\eee,\, \eee_1$, and $\eee_2$,
\beq\label{eq1}
\big|\vvv + \frac{h(t)}{c_1 t}\, \eee\big| > C_1.
\eeq
On the other hand, since $\frac{h(t)}{c_1 t} \le \frac{c_0}{c_1} \le \max\big\{ \frac{|\hat M_t \rrr_0|}{|\hat M_t \hat N_t|},\, \frac{|\hat N_t \rrr_0|}{|\hat M_t \hat N_t|}  \big\} = \max \{ \lam_1,\, \lam_2 \}$, one concludes that the norm of $\frac{\theta_2}{2}\, \vvv + \frac{\theta_1}{2}\, \frac{h(t)}{c_1 t}\, \eee$ is smaller than $\theta$ times a  certain positive value $C_2$ depending only on $\eee,\, \eee_1$, and $\eee_2$,
\beq\label{eq2}
\big|\frac{\theta_2}{2}\, \vvv + \frac{\theta_1}{2}\, \frac{h(t)}{c_1 t}\, \eee\big| < C_2 \theta.
\eeq

Further, using \eqref{estimateB}, one has
$$
\frac{\theta\, d(t)\, c_1 t}{|B_t|} \ge \frac{\theta\, c_1}{2\, c_0},
$$
hence the normalized total norm $\frac{1}{|B_t|}\, \nu_{S_{t,\theta}^{1}}(S^2)$ satisfies the inequality
$$
\frac{1}{|B_t|}\, \nu_{S_{t,\theta}^{1}}(S^2) \ge \frac{1}{|B_t|}\, \Big| \int_{S^2} n\, d\nu_{S_{t,\theta}^{1}}(n) \Big|
\ge \frac{\theta\, c_1}{2\, c_0} \big( C_1 - C_2 \theta + o(1) \big) \quad \text{as} \ \ t \to 0,
$$
and therefore, for $\theta < C_1/C_2$ each partial limit of $\frac{1}{|B_t|}\, \nu_{S_{t,\theta}^{1}}$ is nonzero.

Let $\nu_*$ be a partial limit of $\nu_t$, and let the sequence $t_i$ converging to zero be such that $\nu_* = \lim_{i \to \infty} \nu_{t_i}$. Equation \eqref{eqTheta} can be rewritten as
$$
\frac{1}{|B_t|} \int_{S^2} n\, d\nu_{S_{t,\theta}^{1}}(n) = \frac{\theta\, d(t)\, c_1 t}{|B_t|} \left( \big|\vvv + \frac{h(t)}{c_1 t}\, \eee \big| \eee_1 - \Big[\frac{\theta_2}{2}\, \vvv + \frac{\theta_1}{2}\, \frac{h(t)}{c_1 t}\, \eee \Big] + o(1) \right)
$$
$$
= \frac{\theta\, d(t)\, c_1 t}{|B_t|}\ \big|\vvv + \frac{h(t)}{c_1 t}\, \eee \big|
\left( \eee_1\ - \ \frac{\frac{\theta_2}{2}\, \vvv + \frac{\theta_1}{2}\, \frac{h(t)}{c_1 t}\, \eee + o(1)}{\big|\vvv + \frac{h(t)}{c_1 t}\, \eee \big| } \right), \quad t \to 0.
$$
Taking into account inequalities \eqref{eq1} and \eqref{eq2}, one concludes that the center of mass of each partial limit of $\frac{1}{|B_{t_i}|}\, \nu_{S_{{t_i},\theta}^{1}}$ is contained in the convex cone $\UUU_\theta(\eee_1) = \{ \lam(\eee_1 + \theta \frac{C_2}{C_1}\, w),\, |w| \le 1,\, \lam > 0 \}$ shrinking to the ray $\{ \lam\eee_1,\, \lam > 0 \}$ as $\theta \to 0$ (and of course the set of such partial limits is nonempty). Since $\frac{1}{|B_t|}\, \nu_{S_{t,\theta}^{1}} \le \nu_t$ and $\lim_{i \to \infty} \nu_{t_i} = \nu_*$ is supported on $\Gam$, each partial limit of $\frac{1}{|B_{t_i}|}\, \nu_{S_{{t_i},\theta}^{1}}$ is also supported on $\Gam$, and the support has nonempty intersection with $\UUU_\theta(\eee_1)$ (otherwise its center of mass does not belong to $\UUU_\theta(\eee_1)$). It follows that spt$\,\nu_*$ also has nonempty intersection with $\UUU_\theta(\eee_1)$.

Since $\theta$ can be made arbitrarily small, spt$\,\nu_*$ contains points that are arbitrarily close to $\eee_1$, and therefore, contains $\eee_1$.
\end{proof}

\section{Proof of Theorem \ref{t1b}}\label{sect t1b}

Let $\al$ be the angle between $\eee$ and $\eee_1$ and $\bt$ be the angle between $\eee$ and $\eee_2$; that is, $(\eee, \eee_1) = \cos\al$ and  $(\eee, \eee_2) = \cos\bt$, $\al > 0$, $\bt > 0$, $\al + \bt < \pi$. Take the orthogonal coordinate system with the coordinates $x,\, y,\, z$ so as the origin is at $\rrr_0$ and the vectors $\eee,\, \eee_1,\, \eee_2$ take the form $\eee = (0,0,-1)$, $\eee_1 = (-\sin\al, 0, -\cos\al)$, $\eee_2 = (\sin\bt, 0, -\cos\bt)$. One then has $\lam_1 = \sin\bt/\sin(\al+\bt)$, $\lam_2 = \sin\al/\sin(\al+\bt)$, $\rrr_0 = (0,0,0)$, and equations \eqref{dih angle} for the tangent cone are transformed into
\beq\label{dih2}
-z\cot\al \le x \le z\cot\bt.
\eeq
Note that $\cot\al + \cot\bt > 0$, and therefore, the cone is contained in the positive half-space $z \ge 0$.

The formulas for $C_t$, $B_t$, and $S_t$ take the form
$$C_t = C \cap \{ z \le t \}, \quad B_t = C \cap \{ z = t \}, \ \ \text{ and } \ \ S_t = \pl C \cap \{ z \le t \}.
$$

The image of the arc $\Gam$ under the orthogonal projection on the $xz$-plane is the arc in $S^1$ with the endpoints $(-\sin\al,-\cos\al)$ and $(\sin\bt, -\cos\bt)$. We identify the points $(\sin\vphi, -\cos\vphi)$ of the circumference $S^1$ with the values $\vphi \in [-\pi/2,\, \pi/2]$. In particular, the image of $\Gam$ is identified with the segment $[-\al,\, \bt]$. The image of the compact set $K \subset \Gam$ is identified with a subset of $[-\al,\, \bt]$, which will also be denoted by $K$. That is, with this identification we have $\{ -\al,\, \bt \} \subset K \subset [-\al,\, \bt]$.

Now choose a finite measure $\mu$ on $S^1$ supported on $K$. One can, for instance, define the generating function of this measure $F_\mu(\vphi) = \mu([-\al, \vphi])$ as follows. The open set $[-\al,\, \bt] \setminus K$ is the finite or countable union of disjoint open intervals, $[-\al,\, \bt] \setminus K = \cup_i (a_i,\, b_i)$. To each $i$ assign a value $\xi_i \in (a_i,\, b_i)$. For $\vphi \in (a_i,\, b_i)$ define $F_\mu(\vphi) = \al + \xi_i$. Then extend the definition of $F_\mu$ to the closure $\overline{\cup_i (a_i,\, b_i)}$, so as the resulting function is monotone increasing and right semi-continuous. Finally, define $F_\mu$ on the set $[-\al, \bt] \setminus \overline{\cup_i (a_i,\, b_i)}$ (which is again the disjoint union of intervals) so as it is affine, nonnegative, and strictly monotone increasing on the closure of each interval.

Denote by $V$ the angle $-z\cot\al \le x \le z\cot\bt$.  We are going to find a planar curve inducing the measure $\mu$ with the endpoints on the sides of this angle $x = -z\cot\al,\, z \ge 0$ and $x = z\cot\bt,\, z \ge 0$ such that the unbounded domain (denoted by $\BBB$) with the boundary composed of the curve and the rays is convex. See Fig.~\ref{figCurve}\,(a,b).

       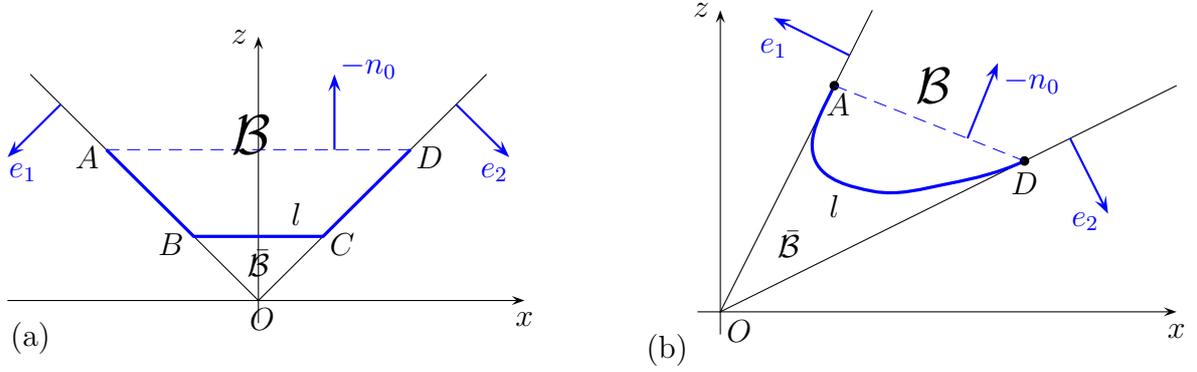
\begin{figure}[h]
\begin{picture}(0,140)
\scalebox{1}{
\rput(3.4,0.75){
\psline[arrows=->,arrowscale=1.5,linewidth=0.4pt](-3.3,0)(3.5,0)
\psline[arrows=->,arrowscale=1.5,linewidth=0.4pt](0,-0.3)(0,3.5)
\psline[linewidth=0.4pt](-3,3)(0,0)(3,3)
\psline[linewidth=1.2pt,linecolor=blue](-2,2)(-0.85,0.85)(0.85,0.85)(2,2)
          \psline[linewidth=0.3pt,linestyle=dashed,linecolor=blue](-2,2)(2,2)
     \psline[linecolor=blue,arrows=->,arrowscale=1.5,linewidth=0.8pt](-2.6,2.6)(-3.3,1.9)
     \psline[linecolor=blue,arrows=->,arrowscale=1.5,linewidth=0.8pt](2.6,2.6)(3.3,1.9)
          \psline[linecolor=blue,arrows=->,arrowscale=1.5,linewidth=0.8pt](1,2)(1,3)
          \rput(1.45,3.1){$\blue -n_0$}
     \rput(-3.1,1.7){$\blue \eee_1$}
     \rput(3.1,1.7){$\blue \eee_2$}
\rput(3.5,-0.25){$x$}
\rput(-0.25,3.5){$z$}
\rput(-2.25,1.9){$A$}
\rput(-1.15,0.75){$B$}
     \rput(0.05,-0.25){$O$}
\rput(2.25,1.9){$D$}
\rput(1.1,0.75){$C$}
\rput(0.5,1.14){$l$}
\rput(-3,-0.5){(a)}
\rput(0,0.5){$\bar \BBB$}
\rput(-0.1,2.2){\scalebox{1.76}{$\BBB$}}
}}
\scalebox{1}{
\rput(9.2,0.6){
\psline[arrows=->,arrowscale=1.5,linewidth=0.4pt](-0.3,0)(6,0)
\psline[arrows=->,arrowscale=1.5,linewidth=0.4pt](0,-0.3)(0,4)
\psline[linewidth=0.4pt](2,4)(0,0)(6,3)
     \psline[linecolor=blue,arrows=->,arrowscale=1.5,linewidth=0.8pt](1.7,3.4)(0.7,3.9)
          \rput(0.7,3.5){$\blue \eee_1$}
          \psline[linecolor=blue,arrows=->,arrowscale=1.5,linewidth=0.8pt](4.6,2.3)(5.1,1.3)
          \rput(4.8,1.2){$\blue \eee_2$}
                    \psline[linecolor=blue,arrows=->,arrowscale=1.5,linewidth=0.8pt](3.25,2.3)(3.65,3.3)
          \rput(4.1,3.05){$\blue -n_0$}
\psecurve[linewidth=1.2pt,linecolor=blue](2,3.7)(1.5,3)(1.25,2)(2,1.6)(3,1.7)(4,2)(4.75,2.5)
     \rput(1.55,2.75){$A$}
     \rput(4,1.7){$D$}
\rput(6,-0.25){$x$}
\rput(-0.25,4){$z$}
\rput(1.5,1.4){$l$}
     \rput(0.25,-0.25){$O$}
\psdots(1.5,3)(4,2)
       \psline[linewidth=0.3pt,linestyle=dashed,linecolor=blue](1.5,3)(4,2)
\rput(-0.7,-0.5){(b)}
\rput(0.9,0.9){$\bar \BBB$}
\rput(2.8,3){\scalebox{1.6}{$\BBB$}}
}}
\end{picture}
\caption{The angle $V$: $\, -z\cot\al \le x \le z\cot\bt$ and the curve $l$ are shown in the cases (a) $\al = \bt = \pi/4$, $\mu = \del_{-\pi/4} + \del_0 + \del_{\pi/4}$; (b) $\bt < \pi/2,\, \al > \pi/2$, $\mu$ is supported on $\Gam$. In the case (a) the curve $l$ is the broken line $ABCD$, and in the case (b) the curve $l$ is smooth.}
\label{figCurve}
\end{figure} 

Set $\int_{S^1} n\, d\mu(n) = c_0 n_0$, where $c_0 \ge 0$ and $n_0$ is a unit vector. Since spt$\,\mu = K$ is contained in $\Gam$, the integral $\int_{S^1} n\, d\mu(n)$ belongs to the cone formed by the rays through the points of $\Gam$ with the vertex at $O = (0,0)$. Further, $(\eee_1 + \eee_2, c_0 n_0) = \int_{S^1} (\eee_1 + \eee_2, n)\, d\mu(n) > 0$, since the integrand is a positive function. It follows that $c_0 > 0$ and $n_0 \in \Gam.$ Since spt$\,\mu$ contains both $\eee_1$ and $\eee_2$, we conclude that $n_0$ does not coincide with these vectors, that is, lies in the interior of $\Gam$.

It follows that $n_0$ can be represented as the positive linear combination $n_0 = \mu\eee_1 + \mu_2\eee_2$, $\mu_1 > 0$, $\mu_2 > 0$. Put the point $A$ on the ray $x = -z\cot\al,\, z \ge 0$ and the point $D$ on the ray $x = z\cot\bt,\, z \ge 0$, so as $|AO| = \mu_1 c_0$ and $|DO| = \mu_2 c_0$. The integral equation for the surface measure applied to the triangle $OAD$ takes the form $|AO| \eee_1 + |DO| \eee_2 + |AD| \eee_3 = \vec 0$, where $\eee_3$ is the outward normal to $AD$. From this equation one immediately obtains that $|AD| = c_0$ and $\eee_3 = -n_0$.

The center of mass of the auxiliary measure $\bar\mu = \mu + c_0 \del_{-n_0}$ is at the origin. Besides, spt$\,\mu$ contains $\eee_1$, $\eee_2$, and $-n_0$, and therefore, the linear span of spt$\,\mu$ is $\RRR^2$. Hence, according to Alexandrov's theorem, there exists a unique (up to a translation) planar convex body with the surface measure equal to $\bar\mu.$ The boundary of this body is the union of a curve inducing the measure $\mu$ and a line segment with the lenght $c_0$ and with the outward normal ${-n_0}$. Making if necessary a translation, we assume that this segment coincides with $AD$.

The examples of the resulting planar body are shown in Fig.~\ref{figCurve}. It is the quadrangle $ABCD$ in Fig.~\ref{figCurve}\,(a), and it is the figure bounded by the curve $l$ and by the line segment joining its endpoints in Fig.~\ref{figCurve}\,(b) .

Since the outward normals to $l$ lie in $\Gam$ and the outward normals at the endpoints of $l$ are $\eee_1$ and $\eee_2$, the curve is contained in the angle $V$ and touches the sides of the angle $OA$ and $OD$ at the endpoints of $l$. The resulting closed set $\BBB$ is bounded by the curve $l$ and by the sides of the angle $V$; it is convex and unbounded. The construction is done.

Let $\bar\BBB = V \setminus \BBB.$ For $y \in \RRR$ take the curve $y^2 l$ homothetic to $l$ with the center at the origin and with the ratio $y^2$ (if $y=0$, the curve degenerates to the point $O$), and denote by $\BBB_y$ the unbounded convex set bounded by the curve $y^2 l$ and by the rays. One can equivalently define $\BBB_y = V \setminus y^2 \bar\BBB.$ Thus, for $y \ne 0$ we have $\BBB_y = y^2\BBB$, and for $y=0$,  $\BBB_0 = V.$ One has the monotonicity relation $\BBB_{y_1} \subset \BBB_{y_2}$ for $|y_1| \ge |y_2|.$

Define $C$ as follows:
$$
C = \{ (x,y,z) : (x,z) \in \BBB_y \text{ and } z \le 1 \}.
$$
Loosely speaking, when cutting $C$ by planes $y =$ const, one obtains sets $\BBB_y \cap \{ z \le 1 \}.$

Notice that for $0 \le \lam \le 1$ and for all $y_1$ and $y_2$,
$$
\lam\BBB_{y_1} + (1-\lam)\BBB_{y_2} = \BBB_{\sqrt{\lam y_1^2 + (1-\lam) y_2^2}} \subset \BBB_{\lam y_1 + (1-\lam)y_2}.
$$
The inclusion in this formula is the direct consequence of the inequality $\sqrt{\lam y_1^2 + (1-\lam) y_2^2} \ge |\lam y_1 + (1-\lam)y_2|$. If both $y_1$ and $y_2$ are nonzero, the equality in this formula follows from the true identity for the convex set $\BBB$, $\lam y_1^2 \BBB + (1-\lam) y_2^2 \BBB = (\lam y_1^2 + (1-\lam) y_2^2) \BBB$. If $y_1 = y_2 = 0$, the equality takes the form $\lam V + (1-\lam) V = V$. If $y_1 \ne 0$ and $y_2 = 0$, the equality takes the form $\lam y_1^2 \BBB + (1-\lam) V = \lam y_1^2 \BBB.$

Let us show that $C$ is convex. To that end, take two points $\rrr_1 = (x_1, y_1, z_1)$ and $\rrr_2 = (x_2, y_2, z_2)$ in $C$ and show that $\lam \rrr_1 + (1-\lam) \rrr_2$ lies in $C$ for all $0 \le \lam \le 1$. Indeed, since $(x_1, z_1) \in \BBB_{y_1}$ and $(x_2, z_2) \in \BBB_{y_2}$, one has
$$
\lam (x_1, z_1) + (1-\lam) (x_2, z_2) \in \lam\BBB_{y_1} + (1-\lam)\BBB_{y_2} \subset \BBB_{\lam y_1 + (1-\lam)y_2}.
$$
Besides, one obviously has $\lam z_1 + (1-\lam) z_2 \le 1$. It follows that $\lam \rrr_1 + (1-\lam) \rrr_2 \in C$.

One easily sees that each point $(x,0,z)$, with $(x,z)$ in the interior of $V \cap \{ z \le 1 \}$, is an interior point of $C$. Further, for $|y|$ sufficiently large, $\BBB_y \cap \{ z \le 1 \} = \emptyset$, hence $C$ is bounded.

Now check that $C$ is closed. Let a sequence $\{ \rrr_i = (x_i, y_i, z_i) \} \subset C$ converge to a point $\rrr = (x,y,z).$ If $y \ne 0$ then $\frac{1}{y_i^2} (x_i, z_i) \in \BBB$ and since $\BBB$ is closed, the limiting point $\frac{1}{y^2} (x,z)$ also lies in $\BBB$. It follows that $(x,y,z) \in C.$ If $y = 0$ then $(x_i, z_i) \in V \Rightarrow (x,z) \in V$, hence $(x,0,z) \in C$. Thus, $C$ is a convex body.

Take $\vphi \in (\bt,\, \pi - \al)$ and draw the ray in the $xz$-plane with the vertex at $O$ and with the director vector $(\cos\vphi, \sin\vphi)$. This ray intersects $\pl\BBB$ at a point $\tau(\cos\vphi, \sin\vphi)$, $\tau > 0$. Thus, the intersection of $\pl C$ and the plane through $\rrr_0 = (0,0,0)$ with the normal $(\sin\vphi, 0, -\cos\vphi)$ is the curve $(y^2 \tau\cos\vphi, y, y^2 \tau\sin\vphi)$, $|y| \le 1/\sqrt{\tau\sin\vphi}$. It follows that the $y$-axis touches $C$, and therefore, is contained in any plane of support through $\rrr_0 = (0,0,0)$.

Further, $C$ is contained in the dihedral angle \eqref{dih2}, but is not contained in any smaller dihedral angle with the same edge. It follows that the tangent cone at $\rrr_0$ is the dihedral angle \eqref{dih2}.

The intersection of $\BBB$ with the line $z = \tau$ for $\tau \ge 0$ is either empty, or a point, or a line segment. Let its length be $L(\tau)$; it is a non-negative continuous function equal to zero when $\tau$ is sufficiently small. Further, the set $B_t$ for $0 < t \le 1$ is
$$
B_t = C \cap \{ z = t \} = \{ (x,y,t) : (x,t) \in \BBB_y \}.
$$
The intersection of $B_t$ with each plane $y = \text{const} \ne 0$ is a segment (maybe degenerating to a singleton) or the empty set, and its length equals $y^2 L(t/y^2).$ Hence the area of $B_t$ is
$$
|B_t| = \int_\RRR y^2 L(t/y^2)\, dy.
$$
This integral is finite, since the integrand is zero outside a bounded segment. Making the change of variable $\xi = y/\sqrt t$, one obtains
$$
|B_t| = b\, t^{3/2}, \quad \text{where} \quad b = \int_\RRR \xi^2 L(1/\xi^2)\, d\xi > 0.
$$

The set $S_t$ for $0 < t \le 1$ is
$$
S_t = \pl C \cap \{ z \le t \} = \{ (x,y,z) : (x,z) \in \pl\BBB_y \text{ and } z \le t \}.
$$
It is the disjoint union, $S_t = S_t^+ \cup S_t^- \cup S_t^0$, where
$$
S_t^+ = S_t \cap \{ x = z\cot\bt \}, \quad S_t^- = S_t \cap \{ x = -z\cot\al \}, \quad S_t^0 = S_t \cap \{ -z\cot\al < x < z\cot\bt \};
$$
that is, $S_t^+$ and $S_t^-$ are the parts of $S_t$ contained in the faces of the dihedral angle \eqref{dih2} and $S_t^0$ is the part of $S_t$ contained in the interior of the angle.

For $\tau \ge 0$ the part of $\pl\BBB$ below the line $z = \tau$ is the union of three curves; the first and the third ones are line segments contained in the sides of the angle $V$, that is, in the rays $x = z\cot\bt$, $z \ge 0$ and $x = -z\cot\al$, $z \ge 0$, respectively, and the second curve lies between the sides of $V$ and is contained in $l$. Denote by $L_+(\tau),\, L_-(\tau),\, L_0(\tau)$ the lengths of these curves. They are non-negative continuous functions equal to zero for $\tau$ sufficiently small, and $L_-(\tau) \le \tau/\sin\al$, $L_+(\tau) \le \tau/\sin\bt$.

The planar surfaces $S_t^+$ and $S_t^-$ (with the points of the form $(x,0,z)$ taken away) are composed of points $(x,y,z)$ such that $(x/y^2,\, z/y^2)$ lies on the intersection of $\pl\BBB$ with the rays $x = z\cot\bt$ and $x = -z\cot\al$, respectively, and $z \le t$. Their areas are equal to
$$
|S_t^\pm| = \int_\RRR y^2 L_\pm(t/y^2)\, dy,
$$
and making the change of variable $\xi = y/\sqrt t$, one obtains
$$
|S_t^\pm| = s_\pm\, t^{3/2}, \quad \text{where} \quad s_\pm = \int_\RRR \xi^2 L_\pm(1/\xi^2)\, d\xi > 0.
$$

The outward normals to the surfaces $S_t^-$ and $S_t^+$ are $\eee_1$ and $\eee_2$, hence the normalized measures induced by $S_t^-$ and $S_t^+$ are the atoms $\frac{s_-}{b}\, \del_{e_1}$ and $\frac{s_+}{b}\, \del_{e_2}$. Denote by $\nu_t^0$ the normalized measure induced by the surface $S_t^0$; then we have
$$
\nu_t = \frac{s_-}{b}\, \del_{e_1} + \frac{s_+}{b}\, \del_{e_2} + \nu_t^0.
$$
It remains to describe $\nu_t^0$.

Denote by $l_0$ the part of the curve $l$ that lies in the interior of the angle $V$, $-z\cot\al < x < z\cot\bt$, and consider the natural parameterization of $l_0$, $(x(s),z(s)),\, s \in (0,\, s_0)$, starting from the ray $x = -z\cot\al$. The length $s_0$ of the curve $l_0$ is finite, and the derivative $(x'(s),z'(s))$ exists for almost all $s$ and satisfies $x'^2 + z'^2 = 1.$ The surface $S_t^0$ is composed of points $(x,y,z)$ such that $y \ne 0$ and $(x/y^2,\, z/y^2) \in l_0$.

Parameterize $S_t^0$ by the mapping $(s,y) \mapsto \rrr(s,y) = (y^2 x(s), y, y^2 z(s))$ defined on the domain $D_t = \{ (s,y) : 0 \le s \le s_0,\, 0 < |y| \le \sqrt{t/z(s)} \}$. This mapping is injective and $D_t$ is bounded.

The element of area of the surface $S_t^0$ is $|\rrr'_s \times r'_y| ds dy$, and the outward normal to $S_t^0$ is $\rrr'_s \times r'_y/|\rrr'_s \times r'_y|$. One easily finds that
$$\rrr'_s(s,y) \times r'_y(s,y) = y^2 (z'(s),\, -2y\Del(s),\, -x'(s)), \quad \text{where} \ \ \Del(s) =
\left|\! \begin{array}{cc} x(s) & z(s)\\ x'(s) & z'(s) \end{array} \!\right|,
$$
and $|\rrr'_s(s,y) \times r'_y(s,y)| = y^2 \sqrt{1 + 4y^2 \Del(s)^2}$. Note that the vector $(x', z')$ is unitary and $|(x, z)| \le C$, where $C$ is the maximum distance between the points of $l_0$ and the origin, therefore the function $\Del(s)$ is bounded, $|\Del(s)| \le C$.

We have found that the element of area of $S_t^0$ is $y^2 \sqrt{1 + 4y^2 \Del(s)^2}\, ds dy$, and the outward normal to $S_t^0$ at $\rrr(s,y)$ is
$$
n(s,y) = (z'(s), -2y\Del(s), -x'(s))/\sqrt{1 + 4y^2\Del(s)^2}.
$$
Let us now define the auxiliary normalized measure $\tilde\nu_t$ induced by the family of vectors $\tilde n(s,y) = (z'(s), 0, -x'(s))$ on the measurable space $D_t$ with the measure $d\tilde\mu(s,y) = y^2 ds dy$ as follows: for any Borel set $A \subset S^2$,
$$
\tilde\nu_t(A) = \frac{1}{b t^{3/2}}\, \tilde\mu\big( \{ (s,y) : \tilde n(s,y) \in A \} \big).
$$

The measure $\tilde\nu_t$ is easier to study than $\nu_t^0$; below we show that it actually does not depend on $t$. Let us show that $\tilde\nu_t$ and $\nu_t^0$ are asymptotically equivalent: they either both converge to the same limit, or both diverge. To that end it suffices to show that for any continuous function $f$ on $S^2$,
$$
\int_{S^2} f(n)\, d(\nu_t^0 - \tilde\nu_t)(n) \to 0 \quad \text{as} \ \ t \to 0.
$$
This integral can be rewritten in the form
$$
\frac{1}{b t^{3/2}} \int\!\!\!\int_{D_t} f\Big( \frac{(z'(s), -2y\Del(s), -x'(s))}{\sqrt{1 + 4y^2\Del(s)^2}} \Big)\, y^2 \sqrt{1 + 4y^2\Del(s)^2}\, ds dy - 
$$ $$
- \frac{1}{b t^{3/2}} \int\!\!\!\int_{D_t} f(z'(s), 0, -x'(s))\, y^2\, ds dy $$
$$
=\frac{1}{b t^{3/2}} \int\!\!\!\int_{D_t} \Big[ f\Big( \frac{(z'(s), -2y\Del(s), -x'(s))}{\sqrt{1 + 4y^2\Del(s)^2}} \Big)\, \sqrt{1 + 4y^2\Del(s)^2} -
f(z'(s), 0, -x'(s)) \Big] y^2 \, ds dy.
$$
Since the function $f$ is continuous, and therefore, uniformly continuous on $S^2$ and the function $\Del(s)$ is bounded, one concludes that the expression in the square brackets in the latter integral goes to zero as $y \to 0$ uniformly in $s$, and therefore, is smaller than a positive monotone decreasing function $\gam(y)$ going to zero as $y \to 0$. Making the change of variable $\xi = y/\sqrt t$, one obtains that the integral is smaller than
$$
\frac{1}{b t^{3/2}} \int\!\!\!\int_{D_t} y^2 \gam(y)\, ds dy = \frac{1}{b t^{3/2}} \int_0^{s_0} ds \int_{-\sqrt{t/z(s)}}^{\sqrt{t/z(s)}} y^2 \gam(y)\, dy =
\frac{1}{b} \int_0^{s_0} ds \int_{-1/\sqrt{z(s)}}^{1/\sqrt{z(s)}} \xi^2 \gam(\sqrt t\, \xi)\, d\xi.
$$
Since the integrand $\xi^2 \gam(\sqrt t\, \xi)$ is monotone decreasing to zero as $t \to 0$, we conclude that the integral in the right hand side of this expression goes to 0 as $t \to 0$.

It remains to study the measure $\tilde\nu_t$. It is supported on $\Gam$, since all vectors $(z'(s), 0, -x'(s))$ belong to $\Gam.$ Recall that we identify the value $\vphi$ with the vector $(\sin\vphi, 0, -\cos\vphi)$.

Take an interval $\UUU = (\vphi_1,\, \vphi_2) \subset (-\al,\, \bt).$ One has
$$
\tilde\nu_t(\UUU) = \frac{1}{b t^{3/2}} \int_{F_\mu(\vphi_1)}^{F_\mu(\vphi_2-0)} ds \int_{-\sqrt{t/z(s)}}^{\sqrt{t/z(s)}} y^2\, dy =
\frac{1}{b} \int_{F_\mu(\vphi_1)}^{F_\mu(\vphi_2-0)} \frac{2}{3}\, \frac{1}{z(s)^{3/2}}\, ds.
$$
It follows that $\tilde\nu_t$ does not depend on $t$, $\tilde\nu_t = \tilde\nu$, and hence, $\nu_t^0 \to \tilde\nu$ as $t \to 0$. If $\UUU \cap K = \emptyset$ then $F_\mu(\vphi_1) = F_\mu(\vphi_2-0)$ and, according to the above formula, $\tilde\nu(\UUU) = 0$. If, otherwise, $\UUU \cap K \ne \emptyset$  then $F_\mu(\vphi_1) < F_\mu(\vphi_2-0)$, and therefore, $\tilde\nu(\UUU) > 0$. Thus, spt$\,\tilde\nu \setminus \{ -\al,\, \bt \}$ coincides with $K \setminus \{ -\al,\, \bt \}$. It remains to note that $\nu_t$ converges to $\nu_* =  \frac{s_-}{b}\, \del_{e_1} + \frac{s_+}{b}\, \del_{e_2} + \tilde\nu$ and spt$\,\nu_* = K.$
This finishes the proof of Theorem \ref{t1b}.





\section{Application to Newton's problem for convex bodies}\label{sect t3}

\subsection{Newton's problem in terms of surface measure}

It is useful to represent the integral in \eqref{resN} in a different form. Consider the convex body $C = C_{(u)}$ associated with the function $u$,\, $C = \{ (x,y,z) \in \RRR^3 : (x,y) \in \Om,\ 0 \le z \le u(x,y) \}$. Its boundary is $\pl C = \text{graph}(u) \cup (\Om \times \{0\})$. The outer normal to $C$ is $\nnn_\xi = (-\nabla u(x,y),\, 1)/\sqrt{1 + |\nabla u(x,y)|^2}$ at a point $\xi = (x,y,u(x,y)) \in \text{graph}(u)$, and $\nnn_\xi = (0,0,-1)$ at $\xi \in \Om \times \{0\}.$ The integral can then be represented as the surface integral
$$
F(u) = \int\!\!\!\int_{\text{graph}(u)} f(n_\xi)\, d\HHH^2(\xi),
$$
where the function $f : S^2 \to \RRR$ is defined by $f(n_1,n_2,n_3) = (n_3)^3$ and $\HHH^2$ is the 2-dimensional Hausdorff measure. Indeed, the integrand in \eqref{resN} is $1/(1 + |\nabla u(x,y)|^2) = ((\nnn_\xi)_3)^2$, and the slope of graph$(u)$ at $\xi= (x,y,u(x,y))$ is $(\nnn_\xi)_3 = 1/\sqrt{1 + |\nabla u(x,y)|^2}$, therefore the element of the surface area $d\HHH^2(\xi)$ on graph$(u)$ and the element of area $dx dy$ on the horizontal plane are related as follows: $dx dy = (\nnn_\xi)_3\, d\HHH^2(\xi).$

Taking into account that
$$
\int\!\!\!\int_{\Om \times \{0\}} f(n_x)\, d\HHH^2(\xi) = \int\!\!\!\int_{\Om \times \{0\}} (-1)\, d\HHH^2(\xi) = -|\Om| = -\pi,
$$
one can write
$$
F(u) = \int\!\!\!\int_{\pl C} f(n_\xi)\, d\HHH^2(\xi) + \pi.
$$

This integral representation in Newton's problem (in a more general setting) was first used in the paper by Buttazzo and Guasoni \cite{BG97}.

Further, making the change of variable induced by the map $\xi \mapsto n_\xi$ from $\pl' C$ to $S^2$, we obtain the following representation
$$
F(u) = \int_{S^2} f(\nnn)\, d\nu_C(\nnn) + \pi,
$$
where $\nu_C$ means the surface measure of $C$.

\subsection{2-dimensional problems of minimal resistance}


Buttazzo and Kawohl in \cite{BK} state the 2D analogue of Newton's problem: minimize $\int_{-^\rrr}^{\rrr} (1 + u'^2(x))^{-1} dx$ in the class of concave functions $u : [-\rrr,\, \rrr] \to [0,\, M]$, and give the explicit solution: $u(x) = \min\{ M,\, \rrr - |x| \}$, if $0 < M < \rrr$, and $u(x) = M(1 - |x|/\rrr)$, if $M \ge \rrr$.

We will need the following modification of the above problem:
\beq\label{pr00}
\hspace*{-56mm} \text{Minimize the integral } \qquad \int_0^1 \frac{1}{1 + u'(x)^2}\, dx
\eeq
in the class $\UUU_M$ of concave continuous functions $u : [0,\, 1] \to [0,\, M]$ satisfying the condition $u(0) = M$. The solution $u_*$ to this problem is the following: if $0 < M < 1$ then $u_*(x) = \min\{ M,\, 1-x \}$, and if $M \ge 1$ then $u_*(x) = M(1-x)$ (see Fig.~\ref{fig2D}).
The proof is a direct consequence of the solution of the above problem and is left to the reader.

         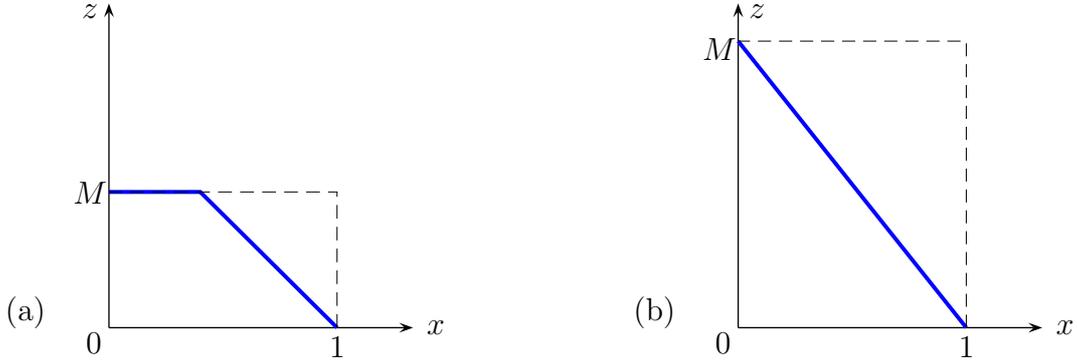
\begin{figure}[h]
\begin{picture}(0,120)
\scalebox{1}{
\rput(2,0.2){
\psline[arrows=->,arrowscale=1.5,linewidth=0.5pt](0,0)(4,0)
\psline[arrows=->,arrowscale=1.5,linewidth=0.5pt](0,0)(0,4.3)
\psline[linecolor=blue,linewidth=1.5pt](0,1.8)(1.2,1.8)(3,0)
\psline[linestyle=dashed,linewidth=0.3pt](3,0)(3,1.8)(0,1.8)
\rput(-0.25,1.8){$M$}
\rput(-0.2,-0.2){0}
\rput(3,-0.25){1}
\rput(4.3,0){$x$}
\rput(-0.25,4.2){$z$}
\rput(-1.1,0.2){(a)}
}}
\scalebox{1}{
\rput(10,.2){
\psline[arrows=->,arrowscale=1.5,linewidth=0.5pt](0,0)(4,0)
\psline[arrows=->,arrowscale=1.5,linewidth=0.5pt](0,0)(0,4.3)
\psline[linecolor=blue,linewidth=1.5pt](0,3.8)(3,0)
\psline[linestyle=dashed,linewidth=0.3pt](3,0)(3,3.8)(0,3.8)
\rput(-0.25,3.7){$M$}
\rput(-0.2,-0.2){0}
\rput(3,-0.25){1}
\rput(4.3,0){$x$}
\rput(0.25,4.2){$z$}
\rput(-1.1,0.2){(b)}
}}

\end{picture}
\caption{The solution $z = u_*(x)$ to the 2-dimensional problem (a) for $0 < M < 1$; (b) for $M \ge 1$.}
\label{fig2D}
\end{figure} 

Given a function $u$ from the class $\UUU_M$, denote by $C = C_{(u)}$ the convex body associated with $u$ on the $xz$-plane,\, $C = \{ (x,z): 0\le x \le 1,\ 0 \le z \le u(x) \}$. The surface measure of $C$ is $\nu_C = M \del_{(-1,0)} + \del_{(0,-1)} + \nu_u$, where the measure $\nu_u$ is induced by the family of outward normals to the curve $\text{graph}(u) \cup \big( \{1\} \times [0,\, u(1)] \big)$. More precisely, if a Borel set $A \subset S^1$ does not contain $(1,0)$ then $\nu_u(A)$ is equal to the linear measure of the set $\{ (x, u(x)) : \frac{(-u'(x)), 1)}{\sqrt{1 + u'^2(x)}} \in A \}$, and $\nu_u((1,0)) = u(1)$. Since $u$ is monotone decreasing, the measure $\nu = \nu_u$ is supported in the first quarter of the unit circumference,
$$
{\rm (i)} \quad \text{spt}\,\nu \subset S^1 \cap \{ x \ge 0,\, z \ge 0 \}. \hspace*{39mm}
$$
Besides, $\nu = \nu_u$ satisfies the relation
$$
{\rm (ii)} \quad \int_{S^1} \nnn\, d\nu(\nnn) = (M, 1). \hspace*{50mm}
$$

Inversely, by Alexandrov's theorem, given a measure $\nu$ on $S^1$ satisfying (i) and (ii), there exists a unique, up to translations, planar convex body whose surface measure is $M \del_{(-1,0)} + \del_{(0,-1)} + \nu$. The lower and the left parts of its boundary are formed by a horizontal segment of length 1 and a vertical segment of length $M$. Making if necessary a translation, one can assume that these segments are $[0,\, 1] \times \{ 0 \}$ and $\{ 0 \} \times [0,\, M]$. The resting part of the boundary is the union of the graph of a concave continuous non-negative monotone decreasing function $u$ joining the points $(0,M)$ and $(1,u(1))$ and the segment $\{ 1 \} \times [0,\, u(1)]$ (degenerating to the point $(1,0)$ if $u(1) = 0$).

Thus, there is a one-to-one correspondence between the class of measures satisfying (i) and (ii) and the class of functions $\UUU_M$.

The integral in \eqref{pr00} can be written in terms of measures in the form
 $ \int_{S^1} f(\nnn)\, d\nu_u(\nnn)$
 where, by slightly abusing the language, we denote by $f$ the function on $S^1$ defined by $f(n_1, n_3) = (n_3)^3$. Thus, problem \eqref{pr00} can be written in the equivalent form as follows:
 \beq\label{pr01}
\hspace*{-56mm} \text{Minimize the integral } \qquad \int_{S^1} f(\nnn)\, d\nu(\nnn)
\eeq
in the class of measures $\nu$ satisfying (i) and (ii). The unique solution to this problem is
 $
 \nu_* = (1-M) \del_{(0,1)} + \sqrt 2\, M \del_{\frac{1}{\sqrt{2}}(1,1)},
 $
 if $0 < M < 1$, and
 $
 \nu_* = \sqrt{1 + M^2}\, \del_{\frac{1}{\sqrt{1+M^2}}(M,1)},
 $
 if $M \ge 1$.

Let us formulate separately this statement in the particular case $M=1$, since it will be used below in this section.

\begin{utv}\label{utv meas}
The minimum of the integral $\int_{S^1} f(n)\, d\nu(n)$ in the class of measures $\nu$ on $S^1$ satisfying the conditions

(i) spt$\,\nu$ lies in the quarter of the circumference $x^2 + z^2 = 1,\, x \ge 0,\, z \ge 0$;

(ii) $\int_{S^1} n\, d\nu(n) = \frac{1}{\sqrt{2}}(1, 1)$\\
equals $1/(2\sqrt 2)$, and the unique minimizer is the atomic measure $\del_{\frac{1}{\sqrt{2}}(1,1)}$.
\end{utv}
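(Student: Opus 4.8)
The plan is to reduce the claim to exhibiting a single affine minorant of $f$ on the admissible arc. Write a point of the quarter circle as $n=(\sin\varphi,\cos\varphi)$ with $\varphi\in[0,\pi/2]$, so that condition (i) says exactly that $\varphi$ ranges over $[0,\pi/2]$ on $\mathrm{spt}\,\nu$, and $f(n)=\cos^{3}\varphi$. I would seek an affine function $\ell(n_1,n_3)=a\,n_1+b\,n_3$ with $f(n)\ge\ell(n)$ everywhere on the arc and $f(n_0)=\ell(n_0)$ at the candidate minimizer $n_0:=\tfrac{1}{\sqrt2}(1,1)$. Granting such an $\ell$, for any admissible $\nu$ one gets $\int f\,d\nu\ge\int\ell\,d\nu$ by (i), while by (ii) $\int\ell\,d\nu=(a+b)\cdot\tfrac{1}{\sqrt2}$, a number independent of $\nu$; choosing $a,b$ so that this number equals $f(n_0)=1/(2\sqrt2)$ yields the lower bound, and $\del_{n_0}$ (which obviously satisfies (i) and (ii)) attains it.

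To pin down $a$ and $b$, impose that $g(\varphi):=\cos^{3}\varphi-a\sin\varphi-b\cos\varphi$ has a zero minimum at $\varphi=\pi/4$: the equations $g(\pi/4)=0$ and $g'(\pi/4)=0$ force $a+b=\tfrac12$ and $b-a=\tfrac32$, hence $a=-\tfrac12$, $b=1$, i.e. $\ell(n_1,n_3)=n_3-\tfrac12 n_1$. The only step that is not pure bookkeeping is checking $g\ge0$ on all of $[0,\pi/2]$; I would do this via the factorization
$$g(\varphi)=\cos^{3}\varphi-\cos\varphi+\tfrac12\sin\varphi=-\cos\varphi\,\sin^{2}\varphi+\tfrac12\sin\varphi=\tfrac12\,\sin\varphi\,(1-\sin 2\varphi),$$
which is nonnegative on $[0,\pi/2]$ because both factors are. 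This same identity shows that on the arc $g$ vanishes precisely at $\varphi=\pi/4$ (the point $n_0$) and at $\varphi=0$ (the point $(0,1)$).

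For the equality case and uniqueness, note $\int f\,d\nu=\int\ell\,d\nu+\int g\,d\nu=\tfrac{1}{2\sqrt2}+\int g\,d\nu$ (using (ii) for the first term), so $\nu$ is a minimizer if and only if $\int g\,d\nu=0$; since $g$ is continuous and nonnegative on the arc, this forces $\mathrm{spt}\,\nu\subset\{n_0,(0,1)\}$. Then condition (ii) finishes the job: if $\nu=\alpha\,\del_{(0,1)}+\beta\,\del_{n_0}$, the first coordinate of $\int n\,d\nu=\tfrac{1}{\sqrt2}(1,1)$ forces $\beta=1$ and then the second coordinate forces $\alpha=0$, so $\nu=\del_{n_0}$.

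The main obstacle is the global verification of $f\ge\ell$ on the quarter circle — everything hinges on spotting the factorization of $g$ — together with the minor but necessary care of discarding the spurious contact point $(0,1)$, for which the moment constraint (ii) is exactly what is needed.
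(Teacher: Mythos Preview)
Your proof is correct and complete. The affine minorant $\ell(n_1,n_3)=n_3-\tfrac12 n_1$ works exactly as you say, the factorization $g(\varphi)=\tfrac12\sin\varphi\,(1-\sin2\varphi)$ is valid, and the uniqueness argument via the two contact points plus the moment constraint is clean.

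Your route is genuinely different from the paper's. The paper does not prove Proposition~\ref{utv meas} directly at all: it first states the one-sided 2D functional problem \eqref{pr00} (minimize $\int_0^1 (1+u'^2)^{-1}\,dx$ over concave $u:[0,1]\to[0,M]$ with $u(0)=M$), quotes its solution as a consequence of the Buttazzo--Kawohl symmetric 2D Newton problem (left to the reader), then sets up a bijection via Alexandrov's theorem between the function class $\mathcal U_M$ and the class of measures on $S^1$ satisfying (i) and $\int n\,d\nu=(M,1)$, and finally specializes to $M=1$ (with an extra normalization by $1/\sqrt2$). So the paper's argument is: known functional result $\Rightarrow$ measure reformulation $\Rightarrow$ specialize.

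What each buys: the paper's approach situates the proposition inside a one-parameter family and reuses an existing calculus-of-variations result, at the cost of invoking the measure--function bijection and deferring the actual minimization to the literature. Your approach is self-contained and elementary --- a direct linear-programming/duality argument that needs nothing beyond the one-line factorization --- and it makes the equality case and uniqueness completely transparent. For the purposes of the paper (where only the case $M=1$ is ever used, in the proof of Theorem~\ref{t3}) your argument is arguably the more economical choice.
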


\subsection{Proof of Theorem \ref{t3}}

Let the function $u = u_*$ minimize the integral \eqref{resN}. Consider the associated convex body $C = C_{(u)} = \{ (x,y,z) \in \RRR^3 : (x,y) \in \Om,\ 0 \le z \le u(x,y) \}$. Note that almost all points of the curve $\pl L_M$ are regular, and take a regular point $(\bar x, \bar y) \in \pl L_M$. The tangent cone to $C$ at $(\bar x, \bar y, M)$ is a dihedral angle. One of its faces is contained in the horizontal plane $z = M$. Let $-k$ ($k > 0$) be the slope of the other face.

For any sequence of regular points $(x_i, y_i)$ converging to $(\bar x, \bar y)$, each partial limit of the corresponding sequence $|\nabla u(x_i, y_i)|$ lies in the interval $[0,\, k]$. Since $|\nabla u(x,y)| \ge 1$ for $(x,y) \in \Om \setminus L_M$, one concludes that for a sequence $(x_i, y_i) \subset \Om \setminus L_M$ converging to $(\bar x, \bar y)$, each partial limit of $|\nabla u(x_i, y_i)|$ lies in the interval $[1,\, k]$, and therefore, $k \ge 1$.

We are going to prove that $k = 1$. It will follow that each partial limit of the sequence $|\nabla u(x_i, y_i)|$ is equal to 1, and therefore, the sequence converges to 1. The theorem will be proved.

Suppose that $k > 1$, that is, the angle of slope of one of faces of the dihedral angle is greater than $45^0$. We are going to come to a contradiction with optimality of $u$.

Let $\rrr_0 = (\bar x, \bar y, M)$, and let $\eee_1$ and $\eee_2$ be the outward normals to the faces of the dihedral angle. We have $\eee_1 = (0,0,1)$ and without loss of generality assume that $\eee_2 = \frac{1}{\sqrt{1+k^2}}\, (k, 0, 1)$; it suffices to choose a coordinate system so as the projection of $\eee_2$ on the horizontal plane is directed along the $x$-axis. Take $\eee = \frac{1}{\sqrt{2}}\, (1, 0, 1)$; then we have $\eee = \lam_1 \eee_1 + \lam_2 \eee_2$ with $\lam_1 = (k - 1)/(\sqrt{2}\, k)$ and $\lam_2 = \sqrt{1+k^2}/(\sqrt{2}\, k)$.

For $t > 0$ consider the convex body $C^{(t)}$ obtained by cutting off a part of $C$ containing $(\bar x, \bar y, M)$ by the plane $(\rrr-\rrr_0, \eee) =- t$. This plane is the graph of the linear function $z = M - \sqrt 2\, t + \bar x -x$. The upper part of the surface of $C^{(t)}$ is the concave function $u^{(t)}$ defined by $u^{(t)}(x,y) = \min \{ u(x,y),\, M - \sqrt 2\, t + \bar x -x \}$. For $t$ sufficiently small the plane lies above the disc $\Om \times \{0\}$, and therefore, the non-negative function $u^{(t)}$ is defined on all of $\Om$.

Let $B_t$ be the intersection of $C$ with the cutting plane and $S_t$ be the part of $\pl C$ located above the plane.
One has $B_t = \{ (x,y,z) : 0 \le z = M - \sqrt 2\, t + \bar x -x \le u(x,y) \}$ and  $S_t = \{ (x,y,z) : u(x,y) = z \ge M - \sqrt 2\, t + \bar x -x \}$.
The parts of the surfaces $\pl C$ and $\pl C^{(t)}$ situated in the upper half-space $z \ge M - \sqrt 2\, t + \bar x -x$ are, respectively, $S_t$ and $B_t$. The parts of $\pl C$ and $\pl C^{(t)}$ in the lower half-space $z < M - \sqrt 2\, t + \bar x -x$ coincide. Hence the difference $F(u) - F(u^{(t)})$ can be written in terms of surface integrals as
$$
F(u) - F(u^{(t)}) = \int\!\!\!\int_{S_t} f(n_\xi)\, d\HHH^2(\xi) - \int\!\!\!\int_{B_t} f(n_\xi)\, d\HHH^2(\xi);
$$
recall that $n_\xi$ means the outward normal to the corresponding surface ($S_t$ or $B_t$) and the function $f : S^2 \to \RRR$ is given by $f(n_1,n_2,n_3) = (n_3)^3$.

Since the outward normal to $B_t$ is $\eee = \frac{1}{\sqrt{2}}\, (1,0,1)$ and $f(\eee) = 1/(2\sqrt{2})$,  the latter integral in the right hand side equals $|B_t|/(2\sqrt{2})$. Rewriting the former integral in terms of the normalized measure induced by $S_t$, one obtains
$$
\frac{1}{|B_t|}\big( F(u) - F(u^{(t)}) \big) = \int_{S^2} f(n)\, d\nu_t(n) - \frac{1}{2\sqrt{2}}.
$$

By Theorem \ref{t1}, there exists a partial limit $\nu_* = \lim_{i\to\infty} \nu_{t_i}$, and the support of $\nu_*$ is contained in the arc of the circumference $x^2 + z^2 = 1,\, z = 0$ bounded by the vectors $\eee_1$ and $\eee_2$ and contains these vectors. Note that each measure $\nu = \nu_t$ satisfies the condition $\int_{S^2} \nnn\, d\nu(\nnn) = \eee$, therefore the partial limit $\nu = \nu_*$ also satisfies this condition. We have
\beq\label{expression}
\lim_{i\to\infty}\frac{1}{|B_{t_i}|} \big( F(u) - F(u^{(t_i)}) \big) =  \int_{S^2} f(n)\, d\nu_*(n) - \frac{1}{2\sqrt{2}}.
\eeq

Again, slightly abusing the language, below we will denote by $f$ the function on $S^1$ defined by $f(n_1, n_3) = (n_3)^3$ and by $\nu_*$ the push-forward measure of $\nu_*$ by the natural projection $(x,y,z) \mapsto (x,z)$ from $S^2$ to $\RRR^2$. We have

(i) spt$\,\nu_*$ is contained in the quarter of the circumference $x^2 + z^2 = 1,\, x \ge 0,\, z \ge 0$;

(ii) $\int_{S^1} n\, d\nu_*(n) = \frac{1}{\sqrt{2}}(1, 1)$. \\
The expression in the right hand side of \eqref{expression} takes the form
\beq\label{expr}
\int_{S^1} f(n)\, d\nu_*(n) - \frac{1}{2\sqrt{2}}.
\eeq

According to Proposition \ref{utv meas}, the infimum of this expression in the set of measures $\nu$ satisfying (i) and (ii)  is attained at the atomic measure $\del_{\frac{1}{\sqrt{2}}(1,-1)}$ and is equal to 0. The measure $\nu_*$ does not coincide with the minimizer, since its support contains the points $(0,1)$ and $\frac{1}{\sqrt{1+k^2}}\, (k, 1)$, and therefore the expression \eqref{expr} is strictly greater than zero. It follows that for some $t = t_i$, $F(u) > F(u^{(t_i)})$, hence $u$ is not optimal. Theorem \ref{t3} is proved.

\section*{Acknowledgements}

{This work was supported by Foundation for Science and Technology (FCT), within project UID/MAT/04106/2019 (CIDMA). I am deeply grateful to G. Buttazzo, G. Wachsmuth, and V. Alexandrov for useful discussions of the issues considered in this paper.


\begin{thebibliography}{99}

\bibitem{Alex}
A. D. Alexandrov.\, {\it Selected Works. Part I: Selected Scientific Papers}, { Chapter V, \S 3.}\, Ed. by Yu.~G. Reshetnyak and S.~S. Kutateladze. Gordon and Breach Publishers (1996). 

\bibitem{BelloniKawohl}
M. Belloni and B. Kawohl. \textit{A paper of Legendre revisited}. Forum Math. {\bf 9}, 655-668 (1997).

\bibitem{BW prescribed volume}
M. Belloni and A. Wagner. {\it Newton’s problem of minimal resistance in the class of bodies with prescribed volume}.
J. Convex Anal. {\bf 10}, 491–500 (2003).

\bibitem{BrFK}
F. Brock, V. Ferone and B. Kawohl.\, \textit{A symmetry problem in the calculus of variations}.\, Calc. Var. {\bf 4}, 593-599 (1996).

\bibitem{BFK}
G. Buttazzo, V. Ferone, B. Kawohl.\,
\textit{Minimum problems over sets of concave functions and related questions}.\, Math. Nachr. {\bf 173}, 71--89 (1995).

\bibitem{BK}
G. Buttazzo, B. Kawohl.\, \textit{On Newton's problem of minimal resistance}.\, Math. Intell. {\bf 15}, 7--12 (1993).

\bibitem{BG97}
G. Buttazzo, P. Guasoni.\, {\it Shape optimization problems over classes of convex domains}.\, J. Convex Anal. {\bf 4}, No.2, 343-351 (1997).

\bibitem{CL1}
M. Comte, T. Lachand-Robert.\, \textit{Newton's problem of the body of minimal resistance under a single-impact assumption}.\,
Calc. Var. Partial Differ. Equ. {\bf 12}, 173-211 (2001).

\bibitem{CL2}
M. Comte, T. Lachand-Robert.\, \textit{Existence of minimizers for Newton's problem of the body of minimal resistance under a single-impact assumption}.\, J. Anal. Math. {\bf 83}, 313-335 (2001).

\bibitem{LO}
T. Lachand-Robert and E. Oudet.\, \textit{Minimizing within convex bodies using a convex hull method}.\, SIAM J. Optim. {\bf 16}, 368-379 (2006).

\bibitem{LP1}
T. Lachand-Robert, M.~A. Peletier.\,
\textit{Newton's problem of the body of minimal resistance in the class of convex developable functions}.\, Math. Nachr. {\bf 226}, 153-176 (2001).


\bibitem{N}
I. Newton. {\it Philosophiae naturalis principia mathematica}. (London: Streater) 1687.

\bibitem{ARMA}
A. Plakhov. {\it Billiards and two-dimensional problems of optimal resistance}. Arch. Ration. Mech. Anal. {\bf 194}, 349-382 (2009). 

\bibitem{optimal roughening}
A Plakhov. {\it Optimal roughening of convex bodies}. Canad. J. Math. {\bf 64}, 1058-1074 (2012). 

\bibitem{bookP}
A. Plakhov. {\it Exterior billiards. Systems with impacts outside bounded domains}. Springer, New York, 2012. xiv+284 pp. ISBN: 978-1-4614-4480-0

\bibitem{PT thermal}
A. Plakhov and D. Torres.\, {\it Newton's aerodynamic problem in media of chaotically moving particles}. Sbornik: Math. {\bf 196}, 885-933 (2005).

\bibitem{PlakhovSIA}
A. Plakhov. {\it Newton’s problem of minimal resistance under the single-impact assumption}. Nonlinearity {\bf 29}, 465-488 (2016).

\bibitem{rough2D}
A. Plakhov. {\it Billiard scattering on rough sets: Two-dimensional case}. SIAM J. Math. Anal. {\bf 40}, 2155-2178 (2009). 

\bibitem{Nonlinearity}
A. Plakhov and P. Gouveia. {\it Problems of maximal mean resistance on the plane}. Nonlinearity {\bf 20}, 2271-2287 (2007).

\bibitem{Magnus}
A. Plakhov and T. Tchemisova. {\it Force acting on a spinning rough disk in a flow of non-interacting particles}.
Doklady Math. {\bf 79}, 132-135 (2009).

\bibitem{Pogorelov}
A.\,V. Pogorelov. {\it Extrinsic geometry of convex surfaces.} Providence, R.I.: American Mathematical Society (AMS). (1973).

\bibitem{W}
G. Wachsmuth.\, {\it The numerical solution of Newton’s problem of least resistance}.\, Math. Program. A {\bf 147}, 331-350 (2014).



\end{thebibliography}
\end{document}